\let\subset\subseteq 
\let\eps\varepsilon
\let\rho\varrho
\def\dcup{\dot\cup}
\def\NN{\mathbb{N}}
\def\RR{\mathbb{R}}
\def\tB{t_{\mathrm B}}
\def\BOX{\mathrm{Box}}
\def\esssup{\mathrm{ess\,sup\,}}
\def\essinf{\mathrm{ess\,inf\,}}
\newcommand{\JUSTIFY}[1]{\fbox{\tiny{#1}}\quad}
\def\differential{\mathsf{d}} 
\def\Pr{\mathbf{P}} 
\def\Exp{\mathbf{E}} 
\def\Var{\mathbf{Var}} 
\def\RG{\mathbb{G}} 
\def\RH{\mathbb{H}} 
\def\BRG{\mathbb{B}} 
\newtheorem{theorem}{Theorem}[section]
\newtheorem{lemma}[theorem] {Lemma}   
\newtheorem{corollary}[theorem] {Corollary}   
\newtheorem{proposition}[theorem] {Proposition}  
\newtheorem{fact}[theorem]{Fact}
\newtheorem*{dreamlemma*}{Dream Lemma}   
\newtheorem{claim}{Claim}[theorem]
\theoremstyle{definition}
\newtheorem{remark}[theorem] {Remark} 
\newtheorem*{remark*} {Remark} 
\newtheorem{example}[theorem] {Example}
\newtheorem{definition}[theorem] {Definition}
\newcommand{\By}[2]{\overset{\mbox{\tiny{#1}}}{#2}}
\newcommand{\ByRef}[2]{   \By{\eqref{#1}}{#2} }
\newcommand{\geBy}[1]{    \By{#1}{\ge} }
\newcommand{\eqByRef}[1]{ \ByRef{#1}{=} }
\newcommand{\gByRef}[1]{  \ByRef{#1}{>} }
\newcommand{\leByRef}[1]{ \ByRef{#1}{\le} }
\newcommand{\geByRef}[1]{ \ByRef{#1}{\ge} }
\renewcommand{\leq}{\leqslant}
\renewcommand{\le}{\leqslant}
\renewcommand{\geq}{\geqslant}
\renewcommand{\ge}{\geqslant}
\renewcommand{\epsilon}{\varepsilon}
\let\oldmarginpar\marginpar
\renewcommand\marginpar[1]{\-\oldmarginpar[\raggedleft\footnotesize #1]%
{\raggedright\footnotesize #1}}
\newcommand{\Referee}[1]{}
\title{Cliques in dense inhomogeneous random graphs}
\author{Martin Dole\v zal}
\address{Institute of Mathematics, Czech Academy of Sciences. \v Zitn\'a 25, 110 00, Praha, Czech Republic. The Institute of Mathematics of the Czech Academy of Sciences is supported by RVO:67985840.}
\email{dolezal@math.cas.cz}
\author{Jan Hladk\'y}
\address{The Czech Academy of Sciences, Institute of Computer Science, Pod Vod\'{a}renskou v\v{e}\v{z}\'{\i} 2, 182 07 Prague, Czech Republic. With institutional support RVO:67985807.
\emph{Address at time of submission:} Institute of Mathematics, Czech Academy of Sciences. \v Zitn\'a 25, 110 00, Praha, Czech Republic. The Institute of Mathematics of the Czech Academy of Sciences is supported by RVO:67985840.}
\email{honzahladky@gmail.com}
\thanks{MD: Supported by GA \v CR Grant 16-07378S and  RVO: 67985840}
\thanks{JH: The research leading to these results has received funding from the People Programme (Marie Curie Actions) of the European Union's Seventh Framework Programme (FP7/2007-2013) under REA grant agreement umber 628974}
\author{Andr\'as M\'ath\'e}
\address{Mathematics Institute,
       University of Warwick,
       Coventry, CV4~7AL, UK}
\email{A.Mathe@warwick.ac.uk}   
\thanks{AM was supported by a Leverhulme Trust Early Career Fellowship and by the Hungarian National Research, Development and Innovation Office -- NKFIH, 104178.}
\subjclass[2010]{05C80}
\keywords{random graphs, graph limits, clique number}
\date{}
\begin{document}
\begin{abstract}
The theory of dense graph limits comes with a natural sampling process which yields an inhomogeneous variant $\RG(n,W)$ of the Erd\H{o}s--R\'{e}nyi random graph. Here we study the clique number of these random graphs. We establish the concentration of the clique number of $\RG(n,W)$ for each fixed $n$, and give examples of graphons for which $\RG(n,W)$ exhibits wild long-term behavior. Our main result is an asymptotic formula which gives the almost sure clique number of these random graphs. We obtain a similar result for the bipartite version of the problem. We also make an observation that might be of independent interest: Every graphon avoiding a fixed graph is countably-partite.
\end{abstract}
\maketitle\Referee{5}

\section{Introduction}
\label{sec:Intro}
The Erd\H{o}s--R\'enyi random graph $\RG(n,p)$ is a random graph with vertex set $[n]=\{1,\ldots,n\}$, where each edge is included independently with probability $p$. Since Gilbert, and independently Erd\H{o}s and R\'enyi introduced the model in~1959, this has been arguably the most studied random discrete structure. Here, we recall facts about cliques in $\RG(n,p)$; these were among the first properties studied in the model. The key question in the area concerns the order of the largest clique. We write $\omega(G)$ for the order of the largest clique in a graph $G$.
Matula~\cite{Matula:LargestClique}, and independently Grimmett and McDiarmid~\cite{GriMcD:ColouringRandom} have shown that when $p\in (0,1)$ is fixed, and $\epsilon>0$ is arbitrary, we have
\begin{equation}\label{eq:cliqueGnp}
\lim_{n\rightarrow\infty}\Pr\left[\frac{\omega(\RG(n,p))}{\log n}=(1\pm\epsilon)\frac{2}{\log(1/p)}\right]=1\;.
\end{equation}
Here, as well as in the rest of the paper, we use $\log$ for the natural logarithm.
The actual result is stronger in two directions: firstly, it extends also to sequences of probabilities $p_n\rightarrow 0$, and secondly, Matula, Grimmett and McDiarmid proved an asymptotic concentration of $\omega(\RG(n,p))$ on two consecutive values for which they provided an explicit formula. 

Our aim however was extending~\eqref{eq:cliqueGnp} in a different direction. That direction is motivated by the theory of limits of dense graph sequences. Let us introduce the key concepts of the theory; for a thorough treatise we refer to Lov\'asz's book~\cite{Lovasz:LimitBook}. The key object of that theory are graphons. A \emph{graphon} is a symmetric measurable function $W:\Omega^2\rightarrow [0,1]$, where $\Omega$ is a probability space. In their foundational work, Lov\'asz and Szegedy~\cite{LovSze07} proved that each sequence of finite graphs contains a subsequence that converges --- in the so-called \emph{cut metric} --- to a graphon. This itself does not justify the graphons as limit objects; it still could be that the space of graphons is unnecessarily big. In other words, one would like to know that every graphon $W$ is attained as a limit of finite graphs. To this end, Lov\'asz and Szegedy introduced the random graph model $\RG(n,W)$. The set of vertices of $G\sim \RG(n,W)$ is the set $[n]$. To sample $G$, first generate $n$ independent elements $x_1,\ldots,x_n\in \Omega$ according to the law of $\Omega$. Then, connect $i$ and $j$ by an edge with probability $W(x_i,x_j)$ (independently of other choices). Lov\'asz and Szegedy proved that with probability one, the sequence of samples from $\RG(n,W)$ converges to~$W$.

The strength of the theory of graph limits is that convergence in the cut metric implies convergence of many key statistics of graphs (or graphons). These include frequencies of appearances of small subgraphs, and normalized MAX-CUT-like properties. An important direction of research is to understand which other parameters are continuous with respect to the cut metric; those parameters can then be defined even on graphons. In Example~\ref{ex:notcontinuous} below we show that the clique number can be very discontinuous with respect to the cut metric. Not all is lost even in such a situation. One can still study discontinuous parameters of a graphon~$W$ via the sampling procedure~$\RG(n,W)$. The random sampling will suppress pathological counterexamples such as those in Example~\ref{ex:notcontinuous}, and thus allow us to associate limit information even to these limit parameters. 

\begin{example}\label{ex:notcontinuous}
Let $f:\NN\rightarrow \NN$ be a function that tends to infinity very slowly. 
\begin{itemize}
\item Let us consider a sequence of $n$-vertex graphs consisting of a clique of order $\lceil n/f(n)\rceil$ and $n-\lceil n/f(n)\rceil$ isolated vertices. This sequence converges to the 0-graphon, the smallest graphon in the graphon space. Yet, the clique numbers are unusually high, almost of order $\Theta(n)$.
\item Let us consider a sequence of $n$-vertex $f(n)$-partite Tur\'an graphs. This sequence converges to the 1-graphon, the largest graphon in the graphon space. Yet, the clique numbers tend to infinity very slowly.\qed
\end{itemize}
\end{example}

By the previous example, we see that
\begin{itemize}
	\item there are sequences of finite graphs with clique numbers growing much faster than logarithmic with a limit graphon $W\equiv 0$,
	\item while there are other sequences of finite graphs with clique numbers growing much slower than logarithmic with a limit graphon $W\equiv 1$.
\end{itemize}

So, suppose that we have suppressed these pathological examples by looking at ``typical graphs that are close to $W$'' rather than ``all graphs that are close to $W$'', and let us see what value motivated by the clique number can be associated to $W$. To this end, suppose that $W\colon \Omega^2\rightarrow [0,1]$ is such a graphon that $W(x,y)\in [p_1,p_2]$ for every $x,y\in \Omega$, where $0<p_1\le p_2<1$ are fixed. Then the edges of $\RG(n,W)$ are stochastically between $\RG(n,p_1)$ and $\RG(n,p_2)$. Thus,~\eqref{eq:cliqueGnp} tells us that the clique number $\omega(\RG(n,W))$ asymptotically almost surely satisfies
\begin{equation}\label{eq:sandwichp1p2}
(1-o(1))\frac{2}{\log(1/p_1)}\le\frac{\omega(\RG(n,W))}{\log n}\le (1+o(1))\frac{2}{\log(1/p_2)}\;.
\end{equation}
Thus, it is actually plausible to believe that 
\begin{equation}\label{eq:mainquantity}
\frac{\omega(\RG(n,W))}{\log n}
\end{equation} converges in probability. In this paper, we study  this and related questions. 

\subsection{Related literature on inhomogenous random graphs}\label{ssec:relatedliterature}\Referee{4}
Inhomogeneous random graphs allow one to express different intensities of bonds between the corresponding parts of the base space. This is obviously useful in modeling phenomena in biology, sociology, computer science, physics, and other settings. The price one has to pay for this flexibility  of these models is in extra difficulties in  mathematical analysis of their properties. This is one of the reasons why literature on $\RG(n,W)$ is fairly scarce, compared to $\RG(n,p)$; another reason apparently being that the inhomogeneous model is much more recent. Actually, in the inhomogeneous model, most work was done in the \emph{sparse regime}, which we shall introduce now. To get a counterpart to sparse random graphs $\RG(n,p_n)$, $p_n\to 0$, one introduces rescaling $\RG(n,p_n\cdot W)$. In this setting, $W$ need not be bounded from above anymore (even though the question of ``how unbounded'' $W$ can be is rather subtle and we neglect it here). 

The most impressive example of work concerning sparse inhomogeneous random graphs is \cite{BolJanRio:PhaseTransition} in which the existence and the size of the giant component in $\RG(n,\frac1n\cdot W)$ was determined. This work has initiated a big amount of further work on $\RG(n,\frac1n\cdot W)$, such as~\cite{Tur:LargestSubcritical,vanderHofstad:Critical}, as well as on related percolation models~\cite{BolBorChaRio:PercolationDense}.

The threshold for connectivity of inhomogeneous random graphs was investigated in~\cite{DevFra}. The diameter of inhomogeneous random graphs was studied in~\cite{FrMi:Diameter}.

A particular subclass of the random graph models $\RG(n,W)$ are the so-called \emph{stochastic block models} introduced already in 1980's in the field of mathematical sociology~\cite{HolLasLei:StochasticBlockmodels}. They are used extensively in many areas of mathematics, computer science, and physics. In our language, (the dense version of) stochastic block models correspond to the case when $W$ is a step-function with finitely many or countably many steps. The stochastic block model is mathematically much more tractable. For example, the study of criticality in stochastic block models in~\cite{KaKoPa:PhaseTransitionBlock} seems to be much more tractable than in the case of general inhomogeneous random graphs.

\section{Our contribution}
In this section we present our main results. The notation in this section is standard. We refer the reader to Section~\ref{sec:preliminaries} for formal definitions.

We saw that for many natural graphons $W$, $\omega(\RG(n,W))$ grows logarithmically. It is easy to construct a graphon for which $\omega(\RG(n,W))$ grows for example as $\log\log n$, or another graphon for which $\omega(\RG(n,W))$ grows for example as $n^{0.99}$. More surprisingly, our next proposition shows that we can have an oscillation between these two regimes even for one graphon.
\begin{proposition}\label{prop:wild}
For an arbitrary function $f:\NN\rightarrow\RR_+$ with $\lim_{n\to\infty}f(n)=+\infty$ there exists a graphon $W$ and a sequence of integers $1=\ell_0<k_1<\ell_1<k_2<\ell_2<\ldots$ such that asymptotically almost surely,
\begin{align}
\label{eq:Psismall}\omega(\RG(k_i,W))&<f(k_i)\;\mbox{, and}\\
\label{eq:Psilarge}\omega(\RG(\ell_i,W))&>\frac{\ell_i}{f(\ell_i)}\;.
\end{align}
\end{proposition}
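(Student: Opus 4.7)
The plan is to realize $W$ as a complete multipartite graphon with countably many parts grouped into ``levels'' of different scales, so that $\omega(\RG(n,W))$ becomes a coupon-collector-type statistic. Concretely, take $\Omega=[0,1]$, partition it into a residual part $R$ together with, for each $i\ge 1$, a family $\{P_{i,j}\}_{j=1}^{n_i}$ of parts each of measure $\beta_i:=1/\ell_i$, and set $W(x,y)=0$ when $x$ and $y$ lie in the same part and $W(x,y)=1$ otherwise. Then $\RG(n,W)$ is a random complete multipartite graph, and $\omega(\RG(n,W))$ equals the number of parts of the partition that receive at least one of the $n$ sampled vertices. Since proving \eqref{eq:Psismall} and \eqref{eq:Psilarge} for a smaller function only strengthens them, I may also replace $f$ by its non-decreasing lower envelope $\widetilde f(n):=\inf_{m\ge n}f(m)$, which still tends to $+\infty$.

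I would then choose $\ell_0<k_1<\ell_1<k_2<\cdots$ and $n_i$ inductively so that:
\begin{enumerate}
\item $n_i=\lceil 4\ell_i/\widetilde f(\ell_i)\rceil$, so the level-$i$ parts produce $\approx(1-1/e)n_i>2\ell_i/\widetilde f(\ell_i)$ occupied parts in expectation at $n=\ell_i$;
\item $\widetilde f(\ell_i)\ge 10\cdot 2^i$, to guarantee $\sum_i n_i\beta_i\le 1$ and leave measure for $R$;
\item at time $n=k_i$, the ``old'' levels $m<i$ contribute at most $\sum_{m<i}n_m=O(\ell_{i-1}/\widetilde f(\ell_{i-1}))$ occupied parts, so $\widetilde f(k_i)$ should be much larger than this quantity (and than $i$);
\item the ``new'' levels $m\ge i$ at time $k_i$ contribute in expectation at most $\sum_{m\ge i}n_m k_i\beta_m=O(k_i/\widetilde f(\ell_i))$, so $\widetilde f(\ell_i)$ should be much larger than $k_i/\widetilde f(k_i)$.
\end{enumerate}
Given $\ell_{i-1}$, one first picks $k_i>\ell_{i-1}$ with $\widetilde f(k_i)$ large enough for (3), and then $\ell_i>k_i$ large enough for (2) and (4); each step is feasible because $\widetilde f\to\infty$.

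To verify the two conclusions, write $\omega(\RG(n,W))=\sum_Q\mathbb{1}[Q\text{ non-empty}]$ summed over all parts $Q$. The indicators for distinct parts are pairwise negatively correlated: for parts of measures $\alpha,\beta$, the covariance of the emptiness indicators equals $(1-\alpha-\beta)^n-(1-\alpha)^n(1-\beta)^n\le 0$, so the variance of the sum is bounded by its expectation. At $n=\ell_i$, the level-$i$ contribution alone has expectation $\ge 2\ell_i/\widetilde f(\ell_i)$ with relative standard deviation $O(\sqrt{\widetilde f(\ell_i)/\ell_i})\to 0$, and Chebyshev's inequality gives $\omega(\RG(\ell_i,W))>\ell_i/\widetilde f(\ell_i)$ a.a.s. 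At $n=k_i$, the total expected count is below $\widetilde f(k_i)/2$ by (3) and (4), and Chebyshev again yields $\omega(\RG(k_i,W))<\widetilde f(k_i)\le f(k_i)$ a.a.s.

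The main obstacle is stochastic monotonicity: by the obvious subset coupling, $\omega(\RG(k_i,W))$ stochastically dominates $\omega(\RG(\ell_{i-1},W))$, which forces the necessary condition $f(k_i)>\ell_{i-1}/f(\ell_{i-1})$ and, more seriously, rules out any graphon containing a fixed clique block of positive measure, since the contribution of such a block to the clique at time $k_i$ would grow linearly in $k_i$ and defeat a slowly growing $f$. The multipartite construction sidesteps this because the contribution of any old level $m<i$ at time $k_i$ is capped at $n_m$, a quantity independent of $k_i$; only the new-level contribution depends on $k_i$, and it is tamed by pushing $\ell_i$ sufficiently far into the future relative to $k_i$.
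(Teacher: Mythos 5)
Your construction is correct in substance but takes a genuinely different route from the paper's. The paper defines $W$ on $(0,1)$ as a function of $|x-y|$, alternating between the value $0$ on annuli $a_{2i-1}\ge|x-y|>a_{2i}$ and the value $1$ on annuli $a_{2i}\ge|x-y|>a_{2i+1}$, and leans on the fact that $n$ uniform points a.a.s.\ have all pairwise gaps exceeding $n^{-3}$: setting $a_{2i}=k_i^{-3}$ forces (by pigeonhole) every $f(k_i)$-set to contain a pair lying in a forbidden annulus, while setting $a_{2i+1}=\ell_i^{-3}$ makes the $\approx a_{2i}\ell_i/2$ points landing in a window of width $a_{2i}$ pairwise adjacent. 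Your countably-partite graphon instead makes $\omega(\RG(n,W))$ \emph{exactly} equal to the number of occupied parts, an occupancy statistic you control by first and second moments via the pairwise negative correlation of the occupancy indicators. Your route buys a transparent identification of the clique number and a clean ``old levels are capped at $\sum_{m<i}n_m$, new levels are still sparse'' dichotomy, at the price of an infinite inductive bookkeeping over all levels; the paper's route only ever inspects two consecutive scales at each time, but the combinatorial identification of cliques is more ad hoc.

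Two points in your bookkeeping need tightening, though neither threatens the argument. First, at time $k_i$ the expected number of occupied parts in levels $m\ge i$ is $k_i\sum_{m\ge i}n_m\beta_m$, and from condition (2) together with the monotonicity $\widetilde f(\ell_m)\ge\widetilde f(\ell_i)$ one only gets $\sum_{m\ge i}n_m\beta_m=O\bigl(\log\widetilde f(\ell_i)/\widetilde f(\ell_i)\bigr)$, not $O(1/\widetilde f(\ell_i))$: roughly $\log_2\widetilde f(\ell_i)$ levels pass before the geometric decay from (2) takes over. So (4) should require $\widetilde f(\ell_i)/\log\widetilde f(\ell_i)\gg k_i/\widetilde f(k_i)$; this is still achievable when picking $\ell_i$, and --- crucially --- the tail over the not-yet-constructed levels $m>i$ is bounded using only constraints that every future level is guaranteed to satisfy, so the induction closes. (You should also insist, say, $\ell_m\ge 2\ell_{m-1}$ so that the $1/\ell_m$ errors from the ceilings in $n_m$ contribute only $O(k_i/\ell_i)=O(1)$ at time $k_i$.) Second, your Chebyshev step for \eqref{eq:Psilarge} needs $\ell_i/\widetilde f(\ell_i)\to\infty$, which fails for superlinear $f$; but then \eqref{eq:Psilarge} is trivial since $\omega\ge 1$ always, or one simply truncates $f$ at $\sqrt n$ alongside taking the lower envelope.
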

While Proposition~\ref{prop:wild} shows that the long-term behavior of $\omega(\RG(n,W))$ can be quite wild, for a fixed (but large) $n$, the distribution of $\omega(\RG(n,W))$ is concentrated. 
\begin{theorem}\label{thm:concentration}
For each graphon $W$ and each $n$, we have that for each $\epsilon>0$, 
$$
\lim_{n\to \infty}
\Pr\left[\:\left|\frac{\omega(\RG(n,W))}{\Exp[\omega(\RG(n,W))]}-1\right|>\epsilon\:\right]=0
\;.$$
\end{theorem}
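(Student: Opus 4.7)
I would split the argument according to whether $\omega(\RG(n,W))$ stays bounded or escapes to infinity. Set $K^\star := \sup\{k \in \NN : t(K_k, W) > 0\}$, and couple all the $\RG(n,W)$ for varying $n$ as induced subgraphs of one countable graph sampled from $W$ (generate one infinite sequence of vertex labels $(x_i)$ and all edge uniforms once, and restrict to $[n]$). In this coupling $\omega(\RG(n,W))$ is monotone non-decreasing and converges almost surely to some $\omega_\infty\in\NN\cup\{\infty\}$. For each fixed $k$, the identity $\Exp[N_k(\RG(n,W))]=\binom{n}{k}t(K_k,W)$, combined with a standard second-moment bound on the number $N_k$ of $k$-cliques, shows that a $k$-clique exists a.a.s.\ whenever $t(K_k,W)>0$ and never otherwise, so $\omega_\infty=K^\star$ almost surely.

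If $K^\star$ is finite, the coupling forces $\omega(\RG(n,W))\leq K^\star$ uniformly, and the a.s.\ convergence $\omega(\RG(n,W))\to K^\star$ together with bounded convergence gives $\Exp[\omega(\RG(n,W))]\to K^\star$; the conclusion is then immediate.

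If $K^\star=\infty$, then $\omega(\RG(n,W))\to\infty$ almost surely and $\Exp[\omega(\RG(n,W))]\to\infty$, and the actual content of the theorem kicks in. I would control the two tails through moment estimates on $N_k$: the upper tail via the first moment $\Pr[\omega\geq k]\leq \binom{n}{k}t(K_k,W)$, and the lower tail via $\Pr[\omega<k]=\Pr[N_k=0]\leq\Var(N_k)/\Exp[N_k]^2$. Plugging $k=\lfloor(1\pm\epsilon)\Exp[\omega]\rfloor$ reduces the task to showing that the window of $k$ for which $\Exp[N_k]\asymp 1$ has width $o(\Exp[\omega])$; the relevant ratio $\Exp[N_{k+1}]/\Exp[N_k]=\tfrac{n-k}{k+1}\cdot t(K_{k+1},W)/t(K_k,W)$ can be handled by integrating out the last vertex in $t(K_{k+1},W)$.

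The main obstacle is that this window estimate has to hold uniformly in $W$, yet the growth rate of $\Exp[\omega(\RG(n,W))]$ runs the full gamut from $\Theta(\log n)$---as for $W\equiv\tfrac12$, which reproduces the $G(n,\tfrac12)$ regime---up to $\Theta(n)$, as for a graphon that is $1$ on a positive-measure block $A\times A$. In the linear regime Azuma--Hoeffding on the vertex-exposure Doob martingale (which is $1$-Lipschitz, since replacing one vertex together with its incident edges changes $\omega$ by at most $1$) already gives $|\omega-\Exp[\omega]|=O(\sqrt{n\log n})=o(\Exp[\omega])$; in the logarithmic regime the martingale is useless and one has to run a Matula--Grimmett--McDiarmid-style first--second moment analysis adapted to graphons, pinning $\omega$ within $O(1)$ of a deterministic value. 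Verifying that these techniques, or some unifying interpolation, together cover all graphons is the most technical step.
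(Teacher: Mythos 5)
Your reduction to the two cases (bounded versus unbounded $\Exp[\omega(\RG(n,W))]$) matches the paper, and your treatment of the bounded case via $K^\star=\sup\{k:t(K_k,W)>0\}$ is essentially Lemma~\ref{lem:boundedcharacterization} (the paper uses a simpler zero--one law: if $\Pr[\omega(\RG(n,W))\ge\ell]=\delta>0$ then $\Pr[\omega(\RG(kn,W))\ge\ell]\ge 1-(1-\delta)^k$, so no second moment is needed there). The unbounded case, however, has a genuine gap, and you have correctly sensed where it is but not closed it.

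Two concrete problems. First, the second-moment step $\Pr[N_k=0]\le\Var(N_k)/\Exp[N_k]^2$ does \emph{not} follow from $\Exp[N_k]\to\infty$ for inhomogeneous $W$: this is precisely the failure of the ``Dream Lemma'' discussed in Section~\ref{ssec:IntroSecondMoment} (e.g.\ $p_{11}=e^{-3}$, $p_{12}=p_{22}=e^{-1/4}$), where a rare event on one part of the graphon inflates $\Exp[N_k]$ while $N_k=0$ typically. Making the second moment work is the hard content of Theorem~\ref{thm:main} and occupies Sections~\ref{sec:tools}--\ref{sec:mainproof}; it also requires $\essinf W>0$, which Theorem~\ref{thm:concentration} does not assume. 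Second, your two fallback tools do not cover all regimes: Azuma--Hoeffding on the vertex-exposure martingale gives fluctuations $O(\sqrt{n\log n})$, useless whenever $\Exp[\omega]=o(\sqrt{n})$, while a Matula-style analysis is tied to logarithmic growth; Proposition~\ref{prop:wild} shows a single graphon can oscillate between growth rates $f(n)$ and $n/f(n)$, so no interpolation between these two tools is available. The missing idea is Talagrand's inequality: on the vertex-exposure product space $J=\prod_i(\Omega\times[0,1]^{i-1})$ the clique number is $1$-Lipschitz \emph{and} has certificates of size $\ell$ for the event $\{\omega\ge\ell\}$ (reveal one maximum clique), which yields $\Pr[|\omega-\Exp[\omega]|>t]\le 2\exp(-\beta t^2/\Exp[\omega])$. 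Taking $t=(\Exp[\omega])^{3/4}$ gives concentration at scale $o(\Exp[\omega])$ uniformly over all growth rates, with no moment computations at all.
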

The proofs of Proposition~\ref{prop:wild} and Theorem~\ref{thm:concentration} are given in Section~\ref{sec:examples}. In the proof of Theorem~\ref{thm:concentration}, we need to consider the case of graphons $W$ for which $\Exp[\omega(\RG(n,W))]$ is bounded (as $n\rightarrow\infty$) separately. Investigation of such graphons led to a result that is of independent interest. Let us give the details.
Clearly, for each graphon $W$, and each $n\in\NN$, $\omega(\RG(n,W))$ is stochastically dominated by $\omega(\RG(n+1,W))$. As a consequence, the sequence $\Exp[\omega(\RG(1,W))], \Exp[\omega(\RG(2,W))], \Exp[\omega(\RG(3,W))], \ldots$ is nondecreasing. We say that $W$ \emph{has a bounded clique number} if $\lim_{n\to\infty} \Exp[\omega(\RG(n,W))]<+\infty$. Note that one example of graphons of bounded clique numbers are graphons $W$ which have zero homomorphism density of $H$ (see~\eqref{eq:defhomden} for the definition) for some finite graph~$H$.
A subclass of these are \emph{$k$-partite
graphons}. These are graphons $W:\Omega^2\to [0,1]$ for which there exists a
measurable partition $\Omega=\Omega_1\dcup \Omega_2\dcup \ldots \dcup \Omega_k$ such that for each
$i\in[k]$, $W\restriction_{\Omega_i\times \Omega_i}=0$ almost everywhere. In the following example,
we show that the structure of graphons with a bounded clique number can be more
complicated. We consider a sequence of triangle-free graphs $G_1,G_2,\ldots$
whose chromatic numbers tend to infinity (it is a standard exercise that such
graphs indeed exist). Let $W_1,W_2,\ldots$ be their graphon representations. We now glue these graphons into one graphon $W$. Clearly,
$\omega(\RG(n,W))\le 2$ with probability one, but $W$ is not $k$-partite for any $k$.
Here, we show that the structure of graphons with a bounded clique number cannot be
much more complicated than in the example above.
We call a graphon $W:\Omega^2\to [0,1]$ \emph{countably-partite}, if there
exists a measurable partition $\Omega=\Omega_1\dcup \Omega_2\dcup \ldots$ such that
for each $i\in\mathbb N$, $W\restriction_{\Omega_i\times \Omega_i}=0$ almost everywhere. 
\begin{theorem}\label{thm:boundedcliquenumber}
Every graphon with a bounded clique number is countably-partite.
\end{theorem}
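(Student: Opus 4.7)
My plan has two phases. \emph{Phase 1:} derive from the bounded clique number hypothesis that $W$ has zero homomorphism density of some finite clique. Couple the graphs $\RG(n,W)$ naturally so that $\RG(n,W)\subset\RG(n+1,W)$ (one i.i.d.\ sample $x_1,x_2,\ldots\in\Omega$ together with independent edge variables), and set $\omega_\infty:=\sup_n\omega(\RG(n,W))$. Monotone convergence gives $\Exp[\omega_\infty]=\lim_n\Exp[\omega(\RG(n,W))]<\infty$, hence $\omega_\infty<\infty$ almost surely, so some integer $K$ satisfies $\Pr[\omega_\infty\leq K]>0$. Were $t(K_{K+1},W)>0$, the events ``the $i$-th consecutive block of $K{+}1$ vertices in $\RG(\infty,W)$ forms a clique'' ($i\in\NN$) would be mutually independent and each of probability $t(K_{K+1},W)>0$, so Borel--Cantelli would force $\omega_\infty\geq K{+}1$ a.s., contradicting $\Pr[\omega_\infty\leq K]>0$. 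Thus $t(K_{k+1},W)=0$ for some integer $k\leq K$.

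\emph{Phase 2:} by induction on $k\geq 1$ I would prove the sub-lemma: \emph{if $t(K_{k+1},W)=0$ then every measurable $B\subset\Omega$ with $\mu(B)>0$ contains a measurable $A\subset B$ with $\mu(A)>0$ and $W=0$ almost everywhere on $A\times A$.} Granted the sub-lemma, the theorem follows by a greedy exhaustion: pick disjoint measurable independent sets $A_1,A_2,\ldots$ with $\mu(A_n)$ at least half of $\sup\{\mu(A):A\subset\Omega\setminus\bigcup_{j<n}A_j\text{ is measurable and independent}\}$. Since $\sum_n\mu(A_n)\leq 1$ these suprema tend to zero, and the sub-lemma then forces $\mu(\Omega\setminus\bigcup_n A_n)=0$; absorb the residual null set into $A_1$.

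The base $k=1$ of the sub-lemma is immediate: $t(K_2,W)=0$ forces $W=0$ a.e., so $A:=B$ works. For the step, suppose the sub-lemma holds for $k{-}1$, let $t(K_{k+1},W)=0$ and $\mu(B)>0$. If $W=0$ a.e.\ on $B\times B$ take $A=B$; otherwise Fubini yields $x_0\in B$ and $\epsilon>0$ such that $N_\epsilon:=\{y\in B:W(x_0,y)>\epsilon\}$ has positive measure and, after excluding a null set of $x_0$, the slice vanishing
\[
\int_{\Omega^k}\prod_{i=1}^{k}W(x_0,y_i)\prod_{1\leq i<j\leq k}W(y_i,y_j)\,dy_1\cdots dy_k=0
\]
holds at $x_0$. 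Using $W(x_0,y_i)\geq\epsilon$ on $N_\epsilon$,
\[
\int_{N_\epsilon^k}\prod_{1\leq i<j\leq k}W(y_i,y_j)\,dy_1\cdots dy_k\leq \epsilon^{-k}\int_{\Omega^k}\prod_{i=1}^{k}W(x_0,y_i)\prod_{i<j}W(y_i,y_j)\,dy=0,
\]
so the restricted graphon $W\restriction_{N_\epsilon\times N_\epsilon}$, viewed with renormalised measure on $N_\epsilon$, satisfies $t(K_k,\cdot)=0$. The inductive hypothesis applied to it (with $B=N_\epsilon$) furnishes the required positive-measure independent subset of $N_\epsilon\subset B$.

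The main obstacle I expect is measurability and null-set bookkeeping: ensuring that the $x_0$ chosen via Fubini simultaneously witnesses positivity of some $N_\epsilon$ and satisfies the slice identity (both happen on positive-measure sets of $x_0$ and fail only on null ones); verifying that the renormalised restriction is a bona fide graphon whose $t(K_k,\cdot)$ vanishes (handled by the displayed inequality); and arranging the greedy exhaustion so that the choices are genuinely measurable, for instance by showing that the supremum in the exhaustion is realised up to a factor of two by a measurable set. These are technical rather than conceptual issues.
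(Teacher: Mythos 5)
Your proof is correct and its overall skeleton matches the paper's: reduce boundedness of the clique number to the vanishing of some clique density $t(K_{k+1},W)=0$ (your coupling/Borel--Cantelli argument is the same zero--one observation as the paper's Lemma~\ref{lem:boundedcharacterization}), show that every positive-measure set contains a positive-measure independent set, and finish by exhaustion (the paper cites Lemma~\ref{lemma:exhaustion}; your greedy half-of-the-supremum argument is the standard proof of that lemma). The one genuine difference is in the middle step. The paper works with $L=\sup\{k:t(K_k,W)>0\}$ and uses \emph{both} $t(K_L,W)>0$ and $t(K_{L+1},W)=0$ in a single shot: it picks an $(L-1)$-tuple $\boldsymbol{x}$ spanning a positive product and shows its common neighborhood $Q_{\boldsymbol{x}}$ is independent and of positive measure. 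You instead induct on $k$, peeling off one vertex at a time via a Fubini slice and restricting to a positive-measure neighborhood $N_\epsilon$ on which the clique density drops by one; this uses only the vanishing $t(K_{k+1},W)=0$ and not the non-vanishing of the previous density, at the cost of an induction and the attendant null-set bookkeeping you flag (which is routine and handled correctly by your displayed inequality). Both routes are sound; yours is marginally more self-contained, the paper's marginally shorter.
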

The proof of Theorem~\ref{thm:boundedcliquenumber} is given in Section~\ref{ssec:boundedclique}.

\medskip
Let us turn our attention to the main subject of the paper, that is, to the behavior of the clique number in $\mathbb G(n,W)$ scaled as in~\eqref{eq:mainquantity}. As a warm-up for studying~\eqref{eq:mainquantity}, we first deal with its bipartite counterpart. To this end, we shall work with \emph{bigraphons}. Bigraphons, introduced first in~\cite{LoSz:RegularityTopology}, arise as limits of balanced bipartite graphs. A bigraphon is a measurable function $U:\Omega_1\times \Omega_2\rightarrow [0,1]$. Here, $\Omega_1$ and $\Omega_2$ are probability spaces which represent the two partition classes, and the value $U(x,y)$ represents the edge intensity between the parts corresponding to $x$ and $y$. This suggests the sampling procedure for generating the inhomogeneous random bipartite graph $\BRG(n,U)$: we sample uniformly and independently at random points $x_1,\ldots,x_n$ from $\Omega_1$ and $y_1,\ldots,y_n$ from $\Omega_2$. In the bipartite graph $\BRG(n,U)$ with colour classes $\{a_i\}_{i=1}^n$ and $\{b_j\}_{j=1}^n$, we connect $a_i$ with $b_j$ with probability $U(x_i,y_j)$.
We define the natural bipartite counterpart to the clique number. Given a bipartite graph $G=(A,B;E)$, we define its \emph{biclique number} as the largest $\ell$ such that there exist sets $X\subset A$, $Y\subset B$, $|X|=|Y|=\ell$, that induce a complete bipartite graph. We denote the biclique number of $G$ by $\omega_2(G)$.

\medskip

The main result concerning the biclique number is the following.
\begin{theorem}\label{thm:biclique}
Let $U:\Omega_1\times\Omega_2\rightarrow [0,1]$ be a bigraphon whose essential supremum $p=\esssup U$ is strictly between zero and one. Then we asymptotically almost surely have 
$$\omega_2(\BRG(n,U))=(1\pm o(1))\cdot \frac{2}{\log 1/p}\cdot\log n \;.$$
\end{theorem}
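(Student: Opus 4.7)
The plan is to establish matching upper and lower bounds of the form $(1\pm o(1))\cdot\frac{2}{\log(1/p)}\log n$ for $\omega_2(\mathbb{B}(n,U))$. The upper bound will be a direct first-moment argument using stochastic domination by the homogeneous bipartite Erd\H{o}s--R\'enyi graph $\mathbb{B}(n,p)$. The lower bound is the heart of the argument: find a ``near-homogeneous'' rectangle in the bigraphon and apply the second moment method there.

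For the upper bound, since $\esssup U = p$ we have $U\le p$ almost everywhere, so $\mathbb{B}(n,U)$ is stochastically dominated by $\mathbb{B}(n,p)$, the bipartite Erd\H{o}s--R\'enyi graph with both sides of size $n$ and edge probability $p$. For $\ell=\lceil(1+\epsilon)\frac{2\log n}{\log(1/p)}\rceil$, the expected number of $\ell\times\ell$ bicliques in $\mathbb{B}(n,p)$ is $\binom{n}{\ell}^2 p^{\ell^{2}}\le \exp\bigl(2\ell\log n-\ell^{2}\log(1/p)+O(\ell)\bigr)$, which tends to $0$; Markov's inequality then rules out any biclique above the threshold a.a.s.

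For the lower bound, fix small parameters $\delta,\eta>0$. Reducing (as is standard for graphons) to $\Omega_{1}=\Omega_{2}=[0,1]$ with Lebesgue measure, apply the Lebesgue density theorem to the set $S_{\delta}=\{U\ge p-\delta\}$, which has positive two-dimensional measure because $\esssup U = p$. At any density point of $S_{\delta}$ we obtain, by Fubini, measurable rectangles $A\subset\Omega_{1}$, $B\subset\Omega_{2}$ of positive measure with $(\mu\otimes\mu)\bigl(S_{\delta}\cap(A\times B)\bigr)\ge(1-\eta)\mu(A)\mu(B)$. Now sample $\mathbb{B}(n,U)$: by the weak law of large numbers the number of sampled left-vertices (resp.\ right-vertices) whose coordinate lies in $A$ (resp.\ $B$) is a.a.s.\ $(1\pm o(1))\mu(A)n$ and $(1\pm o(1))\mu(B)n$, giving a linear-size induced sub-bigraph $H$. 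Conditional on the coordinates, the edges of $H$ are independent Bernoulli with parameters $\ge(p-\delta)\mathbf{1}_{S_{\delta}}$. Running the classical second moment computation for $\ell\times\ell$ bicliques---with $\ell=(1-\epsilon)\frac{2\log n}{\log(1/(p-\delta))}$---shows both $\mathbb{E}[X]\to\infty$ and $\mathbb{E}[X^{2}]/\mathbb{E}[X]^{2}\to 1$, so Chebyshev's inequality yields such a biclique a.a.s. Letting $\epsilon,\delta\to 0$ concludes the lower bound.

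The main obstacle is the non-uniformity of $U$ on $A\times B$: there may be no positive-measure rectangle on which $U$ is essentially bounded below by $p-\delta$ (the diagonal-type bigraphon $U(x,y)=p-\delta+\delta\mathbf{1}[x+y>1]$ illustrates this), so one cannot reduce cleanly to stochastic domination by $\mathbb{B}(m,m,p-\delta)$. The remedy is to absorb the $\eta$-fraction of ``bad'' pairs into the second-moment calculation. Concretely, one can iteratively prune sampled vertices with too many bad neighbours---by a Markov/Chernoff argument only an $O(\sqrt{\eta})$ fraction of vertices are pruned---and work on the remainder, where a.a.s.\ the density of $S_{\delta}$ is at least $1-\sqrt{\eta}$ \emph{on every row and column}. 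Alternatively one can bypass pruning and compute the second moment directly against $\mathbf{1}_{S_{\delta}}$ using the above product-density lower bound; the cross-terms behave exactly as in the classical second moment computation for cliques, up to a factor $(1-\eta)^{\ell^{2}}$, which is absorbable in $\delta\to 0$ since $\ell^{2}=O(\log^{2}n)$ and $\eta$ can be chosen as a function of $\delta$ with $\eta\log^{2}n \to 0$ only after appropriate quantifier ordering (fix $\ell=\ell(n)$ first, then choose $\eta$).
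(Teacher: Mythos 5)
Your upper bound is exactly the paper's (stochastic domination by $\BRG(n,p)$ plus a first-moment calculation), and your overall plan for the lower bound --- locate, via Lebesgue density, a rectangle $A\times B$ on which $U$ is ``almost constant'' in an averaged sense and run a second moment argument there --- is also the paper's strategy. You have even correctly identified the central obstacle: one cannot in general find a rectangle on which $U\ge p-\delta$ pointwise. But the step where you dispose of this obstacle contains a genuine gap. After reducing to $U'=(p-\delta)\mathbf 1_{S_\delta}$ on $A\times B$, the second moment is $\Exp[X^2]=\sum_{s,t}(\text{comb.\ factor})\cdot \tB(K_{[\ell,s,t]},U')$, where $K_{[\ell,s,t]}$ denotes two copies of $K_{\ell,\ell}$ glued along $s$ left and $t$ right vertices. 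Your claim that each cross-term matches the classical one ``up to a factor $(1-\eta)^{\ell^2}$'' is where things break: a uniform correction of size $e^{\Theta(\eta\ell^2)}=e^{\Theta(\eta\log^2 n)}$ is fatal for the \emph{small-overlap} terms (e.g.\ $s+t=1$), which in the classical computation are only polynomially small (of order $\ell^2/n$ relative to $\Exp[X]^2$, since the combinatorial factor at overlap $(s,t)$ is $n^{-(1+o(1))(s+t)}$). Absorbing it would require $\eta\log^2 n\to 0$, but $\eta$ is a constant fixed when the rectangle $A\times B$ is chosen, before $n\to\infty$; the ``quantifier ordering'' you invoke (fix $\ell(n)$ first, then choose $\eta$) is not available, since the rectangle cannot depend on~$n$.

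What is actually needed --- and what the paper supplies in Proposition~\ref{prop:bicliquealmostconstant} --- is a correction that is exponential in $(s+t)\ell$ rather than in $\ell^2$, so that it is $n^{O(\eta(s+t))}$ and is beaten by the combinatorial factor $n^{-(1+o(1))(s+t)}$. This is obtained from two correlation inequalities: Sidorenko's inequality for complete bipartite graphs, $\tB(K_{a,b},U)\ge\left(\int U\right)^{ab}$ (which also underlies your unproved assertion that $\Exp[X]\to\infty$ for the indicator-overlaid model), and the H\"older-type supermultiplicativity $\tB(K_{i,h},U)\,\tB(K_{j,h},U)\le \tB(K_{i+j,h},U)$, used to compare $\tB(K_{[\ell,s,t]},U)$ with $\tB(K_{\ell,\ell},U)^2$. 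Note that your $U'$ is precisely a $(d,\epsilon)$-constant bigraphon in the paper's sense, with $d=(1-\eta)(p-\delta)$ and $\epsilon=\eta(p-\delta)$, so your reduction lands exactly at Proposition~\ref{prop:bicliquealmostconstant} without proving it. The pruning alternative is likewise only asserted: even after arranging that every row and column has $S_\delta$-density at least $1-\sqrt\eta$, the same second-moment comparison problem remains, so this route does not bypass the correlation inequalities either.
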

The proof of Theorem~\ref{thm:biclique} is given in Section~\ref{sec:bislique}.

\medskip
We turn to our main result which determines the quantity~\eqref{eq:mainquantity}. Suppose that $W$ is a graphon with strictly positive essential infimum. Define
\begin{equation}\label{eq:defKappaSimple}
\kappa(W)=\sup\left\{\frac{2\|h\|_1^2}{\int_{(x,y)\in\Omega^2} h(x)h(y)\log \left(\nicefrac{1}{W(x,y)}\right) \differential(\nu^2)} \::\:h\text{ is a nonnegative } L^1 \text{-function on }\Omega\right\}\;.
\end{equation}
Here, we set $\tfrac 00=0$ and $\tfrac a0=+\infty$ for $a\in\RR\setminus\{0\}$.

We can now state our main result.
\begin{theorem}\label{thm:main}
Suppose that $W$ is a graphon whose essential infimum is strictly positive. Then 
\begin{itemize}
	\item if $\kappa(W)<+\infty$ then a.a.s. $\omega(\RG(n,W))=(1+o(1))\cdot\kappa(W)\cdot\log n$, and
	\item if $\kappa(W)=+\infty$ then
	a.a.s. $\omega(\RG(n,W))\gg \log n$.
\end{itemize}
\end{theorem}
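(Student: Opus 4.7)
The plan is to pair a first-moment argument for the upper bound with a second-moment argument for the lower bound, both driven by a variational principle over the empirical distribution of the vertex labels inside $\Omega$. Let $N_k$ be the number of $k$-cliques of $\RG(n,W)$; since $x_1,\dots,x_n$ are i.i.d.\ with law $\nu$,
\begin{equation*}
\Exp[N_k]=\binom{n}{k}\int_{\Omega^k}\prod_{i<j}W(x_i,x_j)\,\differential(\nu^k).
\end{equation*}
The hypothesis $\essinf W>0$ makes $u:=\log(1/W)$ bounded, which allows a Laplace-type evaluation of the integral. As a preparation I pinch $W$ between block-constant graphons $W^-\le W\le W^+$ on a common partition $\Omega=\bigsqcup_{a=1}^{m}\Omega_a$ of measures $\alpha_a$ with values $W^\pm_{ab}$; since $u$ is bounded and step functions are dense in the relevant senses, the discretisation can be arranged so that $\kappa(W^\pm)\to\kappa(W)$ as the partition refines.

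For the upper bound, I classify each $k$-subset by the profile $\mu=(\mu_a)$ giving the fraction of its vertices in $\Omega_a$, and set $h_\mu:=\sum_a(\mu_a/\alpha_a)\mathbf{1}_{\Omega_a}$, a probability density. A multinomial expansion combined with Stirling gives
\begin{equation*}
\log\Exp[N_k]\;\le\;k\log n\;-\;\tfrac{k^2}{2}\inf_\mu\!\int h_\mu(x)h_\mu(y)\log(1/W^+)\,\differential(\nu^2)\;+\;O(k).
\end{equation*}
By the very definition of $\kappa(W^+)$ the infimum equals $2/\kappa(W^+)$, so the right-hand side is $k\log n-k^2/\kappa(W^+)+O(k)$, which tends to $-\infty$ as soon as $k\ge(1+\epsilon)\kappa(W^+)\log n$. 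Markov's inequality, together with the union bound over the polynomially many profiles, then gives $\omega(\RG(n,W))\le(1+\epsilon)\kappa(W)\log n$ a.a.s.\ after refining the partition.

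For the lower bound, fix a step density $h=\sum_a(\mu_a/\alpha_a)\mathbf{1}_{\Omega_a}$ with $\|h\|_1=1$ and $2/I^-(h)\ge(1-\epsilon)\kappa(W)$, where $I^-(h):=\int h(x)h(y)\log(1/W^-)\,\differential(\nu^2)$ (or with $2/I^-(h)$ arbitrarily large when $\kappa(W)=+\infty$), and set $k:=\lfloor(1-2\epsilon)\cdot 2\log n/I^-(h)\rfloor$. By Chernoff, $|V_a|:=|\{i:x_i\in\Omega_a\}|$ concentrates at $(1+o(1))\alpha_a n$. Conditionally, the expected number $M$ of cliques in $\RG(n,W^-)$ with vertex profile $(k\mu_a)_a$ equals
\begin{equation*}
\prod_a\binom{|V_a|}{k\mu_a}\cdot\prod_{a,b}(W^-_{ab})^{k^2\mu_a\mu_b/2+O(k)}\longrightarrow+\infty
\end{equation*}
by the mirror of the upper-bound calculation run with $W^-$. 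A second-moment computation modelled on the classical $\RG(n,p)$ argument, but summing over the joint overlap profile $\boldsymbol\lambda=(\lambda_a)$ of two such cliques, yields $\Exp[M^2]/(\Exp M)^2\to 1$; stochastic monotonicity $\RG(n,W)\succeq\RG(n,W^-)$ transfers the clique to $\RG(n,W)$.

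The principal technical hurdle is the coloured second-moment step: the covariance of two overlapping candidate cliques depends on the full overlap profile $\boldsymbol\lambda$, not merely on its total size $j=\sum_a\lambda_a$, so the classical geometric bound does not apply verbatim. My plan is, for each $j\in[1,k-1]$, to bound the contribution $\prod_{a,b}(W^-_{ab})^{-\lambda_a\lambda_b/2}$ uniformly over profiles with $\sum_a\lambda_a=j$, and then to collapse the sum into a geometric-type series in $j$, using $k=O(\log n)$ together with the fact that each $W^-_{ab}$ lies in a fixed compact subinterval of $(0,1)$. A subsidiary technical point is that the discretisation errors grow with $\log(1/\essinf W)$, so the approximations $\kappa(W^\pm)\to\kappa(W)$ must be controlled quantitatively with the graphon $W$, hence $\essinf W$, fixed throughout.
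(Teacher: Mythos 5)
Your architecture (profile-based first moment up, profile-based second moment down) is the natural starting point, but the step you defer as ``the principal technical hurdle'' is exactly where this approach breaks, and the fix you sketch --- bounding the covariance uniformly over overlap profiles $\boldsymbol\lambda$ with a fixed total $j=\sum_a\lambda_a$ and collapsing the sum into a geometric series in $j$ --- cannot work. Section~\ref{ssec:IntroSecondMoment} quantifies the obstruction: for the two-block graphon with $p_{11}=e^{-3}$, $p_{12}=p_{22}=e^{-1/4}$ and profile $\alpha=(1,1)$ one has $\Gamma(\alpha,W)=\tfrac18>0$, so $\Exp[M]\to\infty$, yet the overlap profile $\gamma=(1,0)$ has $\Gamma(\gamma,W)=-\tfrac12<0$, which makes the corresponding block of terms in $\Exp[M^2]$ exceed $(\Exp M)^2$ by a factor $\exp(\Theta(\log^2 n))$. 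This divergence is profile-dependent at the scale $\exp(c\log^2 n)$, not a polynomial prefactor, so no uniformization over profiles of equal size $j$, and no appeal to the weights lying in a compact subinterval of $(0,1)$, can restore $\Exp[M^2]/(\Exp M)^2\to1$. Your choice of a near-optimal density $h$ is the right instinct --- the paper's Lemma~\ref{lem:maxl1} (built on the perturbation argument of Lemma~\ref{cl:AndrasClaim}) shows that for histograms nearly attaining $\kappa(W)$ every sub-histogram is \emph{nearly} admissible, which is what rescues the second moment --- but this is a substantive variational theorem that must be proved, and even then the residual error $\Gamma(g)\ge-q$ with $q>0$ contributes $\exp(q\log^2 n)$, which the paper can only absorb by first passing to a subgraphon $U$ with $1/\xi(U)\ge\kappa(W)-\epsilon$ (Lemma~\ref{lem:zoom}), discretizing via the sampling lemma (Lemma~\ref{lem:secondsamplinglemma} and Lemma~\ref{lem:xismallsubgraphs}) rather than via a fixed partition, and running a reweighted second moment with $Y_A=X_A/\Exp[X_A]$ (Lemma~\ref{lem:firstandsecond}). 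None of these ingredients appears in your outline, and without them the lower bound does not close.

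A second, independent gap is the sandwich $W^-\le W\le W^+$ with $\kappa(W^\pm)\to\kappa(W)$. A step majorant must dominate the essential supremum of $W$ on each block, and for a general measurable $W$ the block-wise essential suprema need not converge to $W$ under refinement: if $W$ equals $\tfrac14$ off a symmetric set $S$ of small measure that meets every block of your partitions in positive measure and equals $\tfrac12$ on $S$, then every admissible $W^+$ satisfies $W^+\ge\tfrac12$ a.e., whence $\kappa(W^+)\ge 2/\log 2$, while \eqref{eq:kappa2} shows $\kappa(W)$ can be bounded strictly below $2/\log2$ (e.g.\ when $S$ is a union of off-diagonal product boxes, $2/\kappa(W)\ge\tfrac32\log2$). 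So the upper bound as written only yields $\omega(\RG(n,W))\le(1+\epsilon)\kappa(W^+)\log n$ with $\kappa(W^+)$ possibly bounded away from $\kappa(W)$; a milder version of the same difficulty affects the minorant $W^-$ needed for your lower bound. The paper sidesteps this entirely: it modifies $W$ on a null set using approximate continuity and bounds each term of $\Exp[N_k]$ directly by the Lebesgue-point identity \eqref{eq:kappa1}, with no block approximation of $W$ at all.
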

Theorem~\ref{thm:main} is consistent with~\eqref{eq:cliqueGnp}.\Referee{1} Indeed, suppose that $W\equiv p\in(0,1)$ is a constant graphon. Then for any $h$ in~\eqref{eq:defKappaSimple} (which is not constant zero), we have that $\frac{2\|h\|_1^2}{\int_{(x,y)\in\Omega^2} h(x)h(y)\log \left(\nicefrac{1}{W(x,y)}\right)\differential(\nu^2)}=\frac{2}{\log 1/p}$. We provide heuristics for Theorem~\ref{thm:main} for more complicated graphons in Section~\ref{sec:formulaheuristic}. Unfortunately, we were unable to turn these relatively natural heuristics into a rigorous proof. The actual proof of Theorem~\ref{thm:main} is given in Section~\ref{sec:mainproof}, building on tools from Section~\ref{sec:tools}.

There are several alternative ways of expressing $\kappa(W)$. For example,  when we heuristically derive Theorem~\ref{thm:main}, we make use of the following identity.
\begin{fact}\label{fact:kappaalt}
We have
\begin{equation}\label{eq:defKappa}
\kappa(W)=\sup\left\{\|f\|_1 \::\: f\text{ is a nonnegative } L^1 \text{-function on }\Omega, \;\Gamma(f,W)\ge 0\right\}\;,
\end{equation}
where
\begin{equation}\label{eq:A}
\Gamma(f,W)=\underbrace{\int_{x\in\Omega} f(x)\differential(\nu)}_{(*)}+\underbrace{\frac12\int_{(x,y)\in\Omega^2} f(x)f(y)\log W(x,y)\differential(\nu^2)}_{(**)}
\;.\end{equation}
\end{fact}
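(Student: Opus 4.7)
The plan is to recognize both formulas as solutions to the same scale-invariant maximization problem, linked by a one-parameter rescaling.

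First I would rewrite both formulas in a common notation. Setting $I(h) := \int_{\Omega^2} h(x)h(y) \log(1/W(x,y)) \, \differential(\nu^2)$, the expression inside the supremum in~\eqref{eq:defKappaSimple} becomes $\frac{2\|h\|_1^2}{I(h)}$, while $\Gamma(f,W)$ in~\eqref{eq:A} unfolds as $\|f\|_1 - \tfrac12 I(f)$, so the constraint $\Gamma(f,W) \geq 0$ in~\eqref{eq:defKappa} is equivalent to $I(f) \leq 2\|f\|_1$.

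The central step is a one-parameter optimization exploiting the homogeneity $\|tf\|_1 = t\|f\|_1$ and $I(tf) = t^2 I(f)$. I would fix a nonnegative $h \in L^1(\Omega)$ with $I(h) \in (0, \infty)$ and maximize $\|f\|_1$ over the ray $\{f = th : t > 0\}$ subject to $I(f) \leq 2\|f\|_1$. The constraint reduces to $t \leq \frac{2\|h\|_1}{I(h)}$, and the maximum of $\|f\|_1$ over this ray is $\frac{2\|h\|_1^2}{I(h)}$, which is precisely the ratio in~\eqref{eq:defKappaSimple}. Since every nonnegative $f \in L^1$ lies on such a ray (taking $h = f$, $t = 1$), taking the supremum over $h$ on the left and over $f$ on the right yields the claimed equality.

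Finally, I would verify the boundary cases dictated by the conventions $\tfrac00 = 0$ and $\tfrac a0 = +\infty$ stated after~\eqref{eq:defKappaSimple}: the function $h \equiv 0$ contributes $0$ to both sides, while any $h$ with $\|h\|_1 > 0$ and $I(h) = 0$ forces the left supremum to equal $+\infty$ and, via the same ray, makes the feasible set in~\eqref{eq:defKappa} unbounded in $\|\cdot\|_1$, so the right supremum is also $+\infty$. I do not expect a genuine obstacle here; the whole argument is essentially a Lagrangian reformulation of a scale-invariant Rayleigh-type quotient, and the only mildly delicate point is a tidy treatment of these degenerate cases.
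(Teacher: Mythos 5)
Your argument is correct and is essentially the paper's own proof: the paper likewise normalizes $h$ to $f=\tfrac{2\|h\|_1}{I(h)}h$ (your ray maximizer) for one inequality and rescales a feasible $f$ by $c_0\ge 1$ to reach $\Gamma=0$ for the other, which is exactly your one-parameter optimization along rays exploiting the homogeneity $\|tf\|_1=t\|f\|_1$, $I(tf)=t^2I(f)$. Your treatment of the degenerate case $I(h)=0$ also matches the paper's opening observation, so there is nothing to add beyond noting that your packaging is a slightly tidier unified statement of the same computation.
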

Other expressions of $\kappa(W)$ are given in Proposition~\ref{proposition:differentkappa}.

\begin{remark}\label{rem:CliqueIndep}\Referee{6}
While we formulate all the problems in terms of cliques, we could have worked with the complementary notion of independent sets instead. Indeed, investigating one of these notions with respect to $\RG(n,W)$ is equivalent to investigating the other with respect to $\RG(n,1-W)$.
\end{remark}
Using this observation, we get from Theorem~\ref{thm:main} the following corollary for the size of the maximum independent set $\alpha(\RG(n,W))$.
\begin{corollary}
Suppose that $W$ is a graphon whose essential supremum is strictly less than~1. Then 
\begin{itemize}
	\item if $\kappa(1-W)<+\infty$ then a.a.s. $\alpha(\RG(n,W))=(1+o(1))\cdot\kappa(1-W)\cdot\log n$, and
	\item if $\kappa(1-W)=+\infty$ then
	a.a.s. $\alpha(\RG(n,W))\gg \log n$.
\end{itemize}
\end{corollary}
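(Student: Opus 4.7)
The proof plan is essentially a one-line reduction to Theorem~\ref{thm:main} via the complement operation, as spelled out in Remark~\ref{rem:CliqueIndep}. First, I would observe that if $G \sim \RG(n,W)$ and $\bar G$ denotes its edge-complement on the same labelled vertex set, then $\bar G$ has the same distribution as $\RG(n, 1-W)$. This is immediate from the sampling definition: conditional on the latent points $x_1,\dots,x_n$, each non-edge $\{i,j\}$ of $G$ appears independently with probability $1-W(x_i,x_j)$, which is exactly the law of the edge $\{i,j\}$ in $\RG(n, 1-W)$ conditioned on the same $x_i$'s; and the $x_i$'s themselves are i.i.d.\ from $\Omega$ in both models. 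Independent sets of $G$ are exactly cliques of $\bar G$, hence $\alpha(\RG(n,W))$ and $\omega(\RG(n,1-W))$ are equidistributed.

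Next, I would check that the hypotheses match. The assumption $\esssup W < 1$ means that $1-W$ has strictly positive essential infimum, so $1-W$ satisfies the hypothesis of Theorem~\ref{thm:main}. Since $1-W$ is manifestly a symmetric measurable function with values in $[0,1]$, it is again a graphon, and Theorem~\ref{thm:main} applies to it with parameter $\kappa(1-W)$.

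Finally, I would apply Theorem~\ref{thm:main} to $1-W$ directly. In the case $\kappa(1-W) < +\infty$, we get a.a.s.\ $\omega(\RG(n,1-W)) = (1+o(1))\cdot \kappa(1-W)\cdot \log n$, and the distributional equality above transports this statement verbatim to $\alpha(\RG(n,W))$. In the case $\kappa(1-W) = +\infty$, we similarly get a.a.s.\ $\omega(\RG(n,1-W)) \gg \log n$ and hence a.a.s.\ $\alpha(\RG(n,W)) \gg \log n$.

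There is no real obstacle here; the only minor point worth being careful about is to invoke the distributional identity $\alpha(\RG(n,W)) \stackrel{d}{=} \omega(\RG(n,1-W))$ as a statement about the full joint distributions, so that a.a.s.\ asymptotic statements genuinely transfer. This follows, as noted, from the sampling description of $\RG(n,\cdot)$.
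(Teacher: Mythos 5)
Your proposal is correct and is exactly the paper's argument: the paper derives this corollary directly from Remark~\ref{rem:CliqueIndep} (complementation gives $\alpha(\RG(n,W))\overset{d}{=}\omega(\RG(n,1-W))$) together with Theorem~\ref{thm:main} applied to $1-W$, whose essential infimum is positive precisely because $\esssup W<1$. Your extra care about the distributional identity holding at the level of the full sampling procedure is exactly the right (and only) point to check.
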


\medskip
In our language, (the dense version of) stochastic block models correspond to the case when $W$ is a step-function with finitely many or countably many steps. There exists a conceptually simpler proof of Theorem~\ref{thm:main} when restricted to stochastic block models. This simplification occurs both on the real-analytic side (i.e., general measurable functions versus step-functions) and on the combinatorial side. See our remark in Section~\ref{ssec:failure}.

\section{Preliminaries}\label{sec:preliminaries}
\subsection{Notation} For $n\in\NN$, we write $[n]=\{1,\ldots,n\}$, and $[n]_0=\{0,1,\ldots,n\}$.
As always, we denote by $n\choose k$ the binomial coefficient $\frac{n!}{k!(n-k)!}$. For the multinomial coefficients of higher orders, we employ the notation ${n\choose {k_1 | k_2 | \ldots | k_l}}=\frac{n!}{k_1!k_2!\ldots k_l!(n-\sum k_i)!}$ (here we suppose $k_1+k_2+\ldots +k_l\le n$). We omit rounding symbols where it does not affect correctness of the calculations.

We shall always assume that $\Omega$ is a standard Borel probability space without atoms. We always write $\nu$ for the probability measure associated with $\Omega$. 

We shall sometimes make use of tools from real analysis which are available only for $\RR$ and $\RR^d$. The Lebesgue measure will be denoted by $\lambda$. It should be always clear from the context whether we mean the one-dimensional Lebesgue measure on $\RR$, two dimensional Lebesgue measure on $\RR^2$ or any higher dimensional Lebesgue measure.

We write $\| \cdot \|_1$ to denote the $L^1$-norm of functions (or vectors in a finite-dimensional space). Non-negative vectors, and non-negative $L^1$-functions\footnote{the vector-/function-space will be clear from the context} are called \emph{histograms} (see our explanation at the end of Section~\ref{subsection:FirstMoment}). For a histogram $f$, we write $\BOX(f)$ for all histograms $g$ for which $g\le f$ (pointwise). We say that a histogram is \emph{non-trivial} if it is not almost everywhere zero.

We recall the notions of \emph{essential supremum} and \emph{essential infimum}. Suppose that $\Omega$ is a space equipped with a measure $\nu$. For a measurable function $f\colon X\rightarrow \RR$ we define $\esssup f$ as the least number $a$ such that $\nu(\{x\in \Omega:f(x)> a\})=0$. The quantity $\essinf f$ is defined analogously.

\subsection{Random graphs $\RH(n,W)$}
There is a natural intermediate step when obtaining the random graph $\RG(n,W)$ from a graphon $W$ which is often denoted by $\RH(n,W)$. To obtain $\RH(n,W)$ we sample $n$ random independent points $x_1,\ldots,x_n$ from the probability space underlying $W$. The random graph has the vertex set $[n]$. The edge-set is an edge-set of a complete graph equipped with edge-weights. The weight of the edge $ij$ is $W(x_i,x_j)$. Self-loops are not included.

\subsection{Graphons}
The above crash course in graph limits almost suffices for the purposes of this paper, and we need only a handful of additional standard definitions. See~\cite{Lovasz:LimitBook} for further references.

All (non-discrete) probability spaces in this paper are standard Borel probability spaces without atoms. Recall that the Isomorphism Theorem (see e.g.\ \cite[Theorem 17.41]{Kechris}) tells us that there is a measure-preserving isomorphism between each two such spaces (i.e.\ a bijection between the spaces such that this function and its inverse are measurable and preserve measures). In particular, suppose that $W:\Omega^2\rightarrow [0,1]$ is a graphon defined on a probability space $\Omega$, and let $X$ be another probability space. Let us fix an isomorphism $\psi:X\rightarrow \Omega$ between $X$ and $\Omega$. By a \emph{representation of $W$ on $X$} we mean the graphon $W':X^2\rightarrow [0,1]$, $(x,y) \mapsto W(\psi(x),\psi(y))$. Of course, the representation depends on the actual choice of the isomorphism $\psi$. Note however that the distribution of $\RG(n,W')$ does not depend on the choice of $\psi$ as it is the same as the distribution of $\RG(n,W)$. Note also that from the Isomorphism Theorem above we get the following fact.
\begin{fact}\label{fact:representinterval}
Each graphon can be represented on the open unit interval $(0,1)$.
\end{fact}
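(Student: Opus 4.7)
The plan is to invoke the Isomorphism Theorem cited in the paragraph preceding the statement. Once that machinery is in place, the fact is immediate; the only work is to verify that the open unit interval $(0,1)$ meets the hypothesis of the theorem.

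First, I would observe that $(0,1)$, equipped with its Borel $\sigma$-algebra and the restriction of the one-dimensional Lebesgue measure $\lambda$, is a standard Borel probability space without atoms. Indeed, $(0,1)$ is Polish (it is separable and completely metrizable), so it is standard Borel; the total mass $\lambda((0,1))=1$; and $\lambda(\{x\})=0$ for every $x\in(0,1)$, so there are no atoms. Consequently both $\Omega$ and $(0,1)$ satisfy the hypothesis of the Isomorphism Theorem (\cite[Theorem~17.41]{Kechris}), which therefore supplies a measure-preserving isomorphism $\psi:(0,1)\to\Omega$.

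Now given a graphon $W:\Omega^2\to[0,1]$, I would follow the definition of \emph{representation} given just above the fact and set
$$W'(x,y) \;=\; W(\psi(x),\psi(y)), \qquad (x,y)\in(0,1)^2.$$
Since $\psi$ is Borel measurable, the map $(x,y)\mapsto(\psi(x),\psi(y))$ is measurable $(0,1)^2\to\Omega^2$, so $W'$ is a measurable function; and $W'(x,y)=W'(y,x)$ because $W$ is symmetric. Hence $W'$ is a graphon on $(0,1)$ and, by construction, a representation of $W$ on $(0,1)$.

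There is no real obstacle here: the entire content of the statement is already encapsulated in the Isomorphism Theorem, and the proof amounts to checking that $(0,1)$ is a qualifying probability space. The only subtlety worth flagging is that if one insisted on representing $W$ on the \emph{closed} interval $[0,1]$, the argument is identical because removing the two endpoints is a null modification; that is why the open interval $(0,1)$ suffices.
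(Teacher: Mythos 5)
Your proof is correct and is exactly the argument the paper intends: the fact is stated as an immediate consequence of the Isomorphism Theorem, and your verification that $(0,1)$ with Lebesgue measure is an atomless standard Borel probability space, followed by pulling $W$ back along the isomorphism $\psi$, is precisely that derivation.
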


We shall need to ``zoom in'' on a certain part of a graphon. The next definition is used to this end.
 \begin{definition}\label{def:subgraphon}
 	Suppose that $W:\Omega^2\rightarrow[0,1]$ is a graphon on a probability space $\Omega$ with a measure $\nu$. By a \emph{subgraphon of $W$ obtained by restricting to a set $A\subset \Omega$} of positive measure we mean a function $U:A^2\rightarrow [0,1]$ which is simply the restriction $W\restriction_{A\times A}$. When working with this notion, we need to turn $A$ to a probability space. That is, we view $U$ as a graphon on the probability space $A$ endowed with measure $\nu_A(B):=\frac{\nu(B)}{\nu(A)}$ for every measurable set $B\subset A$.
 \end{definition}
Observe that in the above setting for every $B\subset A$ of positive measure we have
\begin{equation}\label{eq:rescale}
\frac{1}{\nu(B)^2}\int_{(x,y)\in B\times B}\log(\nicefrac{1}{W(x,y)}) \differential(\nu^2)
=
\frac{1}{\nu_A(B)^2}\int_{(x,y)\in B\times B}\log(\nicefrac{1}{W(x,y)}) \differential(\nu^2_A)\;.
\end{equation}
 
Note that a lower bound on $\omega(\RG(n,W\restriction_{A\times A}))$ provides readily a lower bound on $\omega(\RG(n,W))$. More precisely, suppose that we can show that asymptotically almost surely, $\omega(\RG(n,W\restriction_{A\times A}))\ge c\log n$. Consider now sampling the random graph $\RG(n,W)$. By the Law of Large Numbers, out of the $n$ sampled points $x_1,\ldots,x_n\in \Omega$, there will be $(\nu(A)-o(1))n>\tfrac 12\nu(A)n$ many of them contained in $A$. In other words, there is a coupling of $G=\RG(n,W)$ and $G'=\RG(\tfrac 12\nu(A) n,W\restriction_{A\times A})$ such that with high probability, $G'$ is contained in $G$ as a subgraph. We conclude that 
\begin{equation}\label{eq:PsiSubgraphon}
\omega(\RG(n,W))\geBy{a.a.s.} \omega\left(\RG(\tfrac 12\nu(A) n,W\restriction_{A\times A})\right)
\geBy{a.a.s.} c\log\big(\tfrac 12\nu(A)n\big)=(c-o(1))\log n\;.
\end{equation}

\medskip

The \emph{homomorphism density} of a graph $H=(\{v_1,\ldots,v_\ell\},E)$ in a graphon $W$ is defined by
\begin{equation}\label{eq:defhomden}
t(H,W)=\int_{(x_1,\ldots,x_\ell)\in\Omega^\ell}\;
\prod_{i<j:v_iv_j\in E}W(x_i,x_j)\;\differential(\nu^\ell)\;.
\end{equation}

Suppose that $\Omega$ is an atomless standard Borel probability space. Let $W_1,W_2:\Omega^2\rightarrow[0,1]$ be two graphons. We then define the \emph{cut-norm distance} of $W_1$ and $W_2$ by
$$d_\square(W_1,W_2)=\sup_{S,T}\left|\int_{(x,y)\in S\times T}\left(W_1(x,y)-W_2(x,y)\right) \;\differential(\nu^2)\right|,$$
where $S$ and $T$ range over all measurable subsets of $\Omega$. Strictly speaking, $d_\square$ is only a pseudometric since two graphons differing on a set of measure zero have zero distance. Based on the cut-norm distance we can define the key notion of \emph{cut distance} by
\begin{equation}\label{eq:defcutdist}
\delta_\square(W_1,W_2)=\inf_{\varphi} d_\square(W_1^\varphi,W_2)\;,
\end{equation} where $\varphi:\Omega\rightarrow \Omega$ ranges through all measure preserving automorphisms of $\Omega$, and $W_1^\varphi$ stands for a graphon defined by $W_1^\varphi(x,y)=W_1(\varphi(x),\varphi(y))$.
Then $\delta_\square$ is also a pseudometric.

Suppose that $H=(\{v_1,\ldots,v_\ell\},E)$ is a graph (which is allowed to have self-loops), and let $\Omega$ be an arbitrary atomless standard Borel probability space. By a \emph{graphon representation $W_H$ of $H$} we mean the following construction. We consider an arbitrary partition $\Omega=A_1\dcup A_2\dcup\ldots\dcup A_\ell$ into sets of measure $\frac1\ell$ each. We then define the graphon $W_H$ as $1$ or $0$ on each square $A_i\times A_j$, depending on whether $v_iv_j$ forms an edge or not. Note that $W_H$ is not unique since it depends on the choice of the sets $A_1,\ldots,A_\ell$. However, all the possible graphons $W_H$ are at zero distance in the $\delta_\square$-pseudometric. So, when writing $W_H$ we refer to any representative of the above class. 
With this in mind, we can also define the cut distance of $H$ and any graphon $W\colon\Omega^2\rightarrow [0,1]$, denoted by $\delta_\square(H,W)$, as $\delta_\square(W_H,W)$.
Also, all of this extends in a straightforward way to weighted graphs with a weight function $w:E\rightarrow[0,1]$.
\begin{remark}\label{rem:graphononinterval}
Suppose that $H$ is a finite graph and $I$ is the unit interval (open or closed). Then in the above construction we can take the sets $A_i\subset I$ to be intervals.
\end{remark}

\medskip
The key property of the sampling procedures described earlier (both $\RG(n,W)$ and $\RH(n,W)$) is that the samples are typically very close to the original graphon $W$ in the cut distance. Let us state this fact (for the sampling procedure $\RH(n,W)$), proven first in~\cite{Borgs2008c}, formally.
\begin{lemma}[{\cite[Lemma~10.16]{Lovasz:LimitBook}}]\label{lem:secondsamplinglemma}
Let $W$ be an arbitrary graphon. Then for each $k\in\NN$ with probability at least $1-\exp(-\frac{k}{2\log k})$, we have
$\delta_\square\big(\RH(k,W),W\big)\le\frac{20}{\sqrt{\log k}}$.\qed
\end{lemma}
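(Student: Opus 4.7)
The plan is to show that $\RH(k,W)$, regarded as a graphon on $[k]$ with the uniform measure, is cut-close to $W$ by combining a weak regularity approximation of $W$ with a concentration argument for the sample. Note that the only source of randomness is $\mathbf{x}=(x_1,\ldots,x_k)$, since the edge weights in $\RH(k,W)$ are deterministic given $\mathbf{x}$; this is what distinguishes $\RH$ from $\RG$ and lets us avoid an additional layer of edge randomness.

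First, invoke the weak regularity lemma: for a parameter $\eps>0$ to be chosen later, take a partition $\Omega=A_1\dcup\ldots\dcup A_m$ with $m\le 2^{C/\eps^2}$ parts such that $d_\square(W,W_\mathcal{P})\le\eps$, where $W_\mathcal{P}$ is the step graphon averaging $W$ over each rectangle $A_i\times A_j$. Let $B_j:=\{i\in[k]\colon x_i\in A_j\}$. By a Chernoff estimate and a union bound over the $m$ parts, with probability at least $1-m\exp(-\Omega(\eps^2k/m))$ the sizes $|B_j|$ deviate from $k\nu(A_j)$ by at most $\eps k/m$; on this good event one constructs a measure-preserving bijection $\varphi\colon[k]\to\Omega$ sending each $B_j$ into $A_j$. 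Thereby $\delta_\square(\RH(k,W),W)$ is reduced, up to an $O(\eps)$ loss, to the cut norm between the pull-back step graphon $\RH(k,W)^\varphi$ and $W_\mathcal{P}$ over the common partition $\{\varphi^{-1}(A_j)\}$.

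Next, bound this residual cut norm by $\max_{S,T\subseteq[m]}\bigl|g_{S,T}(\mathbf{x})-\mathbb{E}\,g_{S,T}(\mathbf{x})\bigr|$, where
$g_{S,T}(\mathbf{x}):=\tfrac{1}{k^2}\sum_{s,t}W(x_s,x_t)\mathbf{1}\bigl[x_s\in\bigcup_{i\in S}A_i,\;x_t\in\bigcup_{j\in T}A_j\bigr]$.
Each $g_{S,T}$ is a bounded function of the iid coordinates with sensitivity $O(1/k)$ per coordinate, so McDiarmid's inequality yields $\Pr[|g_{S,T}-\mathbb{E}g_{S,T}|>t]\le 2\exp(-\Omega(t^2k))$; a union bound over the $4^m$ choices of $(S,T)$ gives an overall failure probability $\exp(O(m)-\Omega(t^2k))$. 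Finally, pick $\eps=c/\sqrt{\log k}$ with $c$ small enough that $m=2^{C/\eps^2}\le k/\log k$; then the concentration error can be taken as $t=O(1/\sqrt{\log k})$, all exceptional probabilities combine to $\exp(-k/(2\log k))$, and tuning the hidden constants produces the stated bound $20/\sqrt{\log k}$. The main obstacle, and the reason for the slow $1/\sqrt{\log k}$ rate rather than the $1/\sqrt{k}$ one might naively hope for, is precisely this calibration: the tower $m=2^{O(1/\eps^2)}$ supplied by weak regularity limits how small $\eps$ can be before the union bound over $4^m$ block-sum fluctuations swamps the McDiarmid concentration, and the sweet spot forces $\eps\sim 1/\sqrt{\log k}$.
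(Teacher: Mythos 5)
The paper does not prove this lemma at all: it is quoted verbatim from \cite[Lemma~10.16]{Lovasz:LimitBook} (originating in~\cite{Borgs2008c}), and the authors explicitly remark that its proof ``is quite involved.'' Your outline does follow the strategy of the cited proof in broad strokes (weak regularity plus concentration, which is indeed the reason for the $1/\sqrt{\log k}$ rate), but it contains a genuine gap at the decisive step.

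The gap is in your reduction of the residual cut norm to $\max_{S,T\subseteq[m]}|g_{S,T}-\Exp g_{S,T}|$. After overlaying, the difference between the sample step graphon and $W_{\mathcal P}$ is a step function on the partition into the $k$ cells corresponding to individual sample points, not on the $m$ regularity classes; its cut norm is therefore attained on \emph{arbitrary} subsets of the $k$ sample points, i.e.\ one must control $\max_{S,T\subseteq[k]}\frac{1}{k^2}\bigl|\sum_{s\in S,\,t\in T}\bigl(W(x_s,x_t)-W_{\mathcal P}(x_s,x_t)\bigr)\bigr|$. Testing only against unions of whole classes misses exactly the dangerous scenario: within a single block $A_i\times A_j$ the sampled values of $W$ can deviate wildly from the block average even though the whole-block sums are on target, and the weak regularity guarantee $d_\square(W,W_{\mathcal P})\le\eps$ is a statement about integrals over positive-measure sets, which says nothing directly about the measure-zero sample. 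Repairing this with your method would require a union bound over $4^k$ pairs $(S,T)$, which McDiarmid's $\exp(-\Omega(t^2k))$ cannot beat. This is precisely why the actual proof needs the separate First Sampling Lemma (\cite[Lemma~10.6]{Lovasz:LimitBook}), which bounds $\Exp\|U[\mathbf{x}]\|_\square$ in terms of $\|U\|_\square$ for the kernel $U=W-W_{\mathcal P}$ via a ghost-sample/row-sum argument that circumvents the exponential union bound; that lemma is the missing ingredient in your sketch. A secondary point: your calibration ``$c$ small enough that $m=2^{C/\eps^2}\le k/\log k$'' is inverted --- with $\eps=c/\sqrt{\log k}$, shrinking $c$ makes $m=2^{C\log k/c^2}$ \emph{larger}, so $c$ must be taken large enough (bounded below in terms of the weak regularity constant), which is what fixes the explicit constant $20$ rather than allowing it to be tuned down.
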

In some situations, we shall consider a wider class of kernels, the so-called $L^\infty$-graphons. These are just symmetric $L^\infty$-functions $W:\Omega^2\rightarrow \mathbb R_+$. That is, we do not require $L^\infty$-graphons to be bounded by~1 but rather by an arbitrary constant. The random graph $\RH(n,W)$ makes sense even for $L^\infty$-graphons.\footnote{But $\RG(n,W)$ does not make sense.} By simple rescaling, we get the following from Lemma~\ref{lem:secondsamplinglemma}.\Referee{12}
\begin{corollary}\label{cor:secondsamplingunbounded}
Let $W$ an arbitrary $L^\infty$-graphon. Then for each $k\in\NN$ with probability at least $1-\exp(-\frac{k}{2\log k})$, we have $\delta_\square\big(\RH(k,W),W\big)\le\frac{20\|W\|_\infty}{\sqrt{\log k}}$.
\end{corollary}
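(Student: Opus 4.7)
The plan is to reduce the $L^\infty$ case to the already-established bounded case by a simple normalization. Put $M=\|W\|_\infty$; the statement is trivial when $M=0$, so assume $M>0$ and set $W':=W/M$. Then $W'$ is a genuine graphon taking values in $[0,1]$, so Lemma~\ref{lem:secondsamplinglemma} applies to it: with probability at least $1-\exp(-k/(2\log k))$ we have
\[
\delta_\square\big(\RH(k,W'),W'\big)\le \frac{20}{\sqrt{\log k}}\;.
\]

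Next I would couple the two sampling procedures $\RH(k,W)$ and $\RH(k,W')$ by using the same random points $x_1,\dots,x_k\in\Omega$. Under this coupling each edge weight of $\RH(k,W')$ is exactly $1/M$ times the corresponding edge weight of $\RH(k,W)$, so as edge-weighted graphs (equivalently, as their graphon representations) one has $\RH(k,W')=\tfrac{1}{M}\RH(k,W)$ pointwise. Since $W'=\tfrac{1}{M}W$ as well, the only thing left to check is how the cut distance behaves under multiplication by a positive constant.

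Here I would simply observe that the cut-norm is positively homogeneous: for any $c>0$ and any two $L^\infty$-graphons $U_1,U_2$,
\[
d_\square(cU_1,cU_2)=\sup_{S,T}\left|\int_{S\times T}c\big(U_1-U_2\big)\differential(\nu^2)\right|=c\cdot d_\square(U_1,U_2)\;,
\]
and taking infima over measure-preserving $\varphi$ on both sides gives $\delta_\square(cU_1,cU_2)=c\cdot\delta_\square(U_1,U_2)$. Applying this with $c=M$ to the two graphons $\RH(k,W')$ and $W'$ yields
\[
\delta_\square\big(\RH(k,W),W\big)=M\cdot\delta_\square\big(\RH(k,W'),W'\big)\le\frac{20\|W\|_\infty}{\sqrt{\log k}}
\]
on the good event, which is exactly the claim.

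There is no real obstacle here; the only point to be careful about is that $\RH(k,\cdot)$ is defined through edge weights rather than Bernoulli trials (which would not even make sense when entries exceed $1$), so the scaling commutes with sampling exactly. Once that is noted, positive homogeneity of $d_\square$ and hence of $\delta_\square$ makes the reduction immediate.
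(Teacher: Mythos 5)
Your proof is correct and is exactly the ``simple rescaling'' argument the paper invokes without spelling out: normalize by $\|W\|_\infty$, apply Lemma~\ref{lem:secondsamplinglemma} to $W/\|W\|_\infty$, and use positive homogeneity of $d_\square$ (hence of $\delta_\square$) together with the obvious coupling of the two sampling procedures. Nothing further is needed.
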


Let us note that the proof of Lemma~\ref{lem:secondsamplinglemma} is quite involved.

\subsection{Lebesgue points and approximate continuity}
Let $f\colon(0,1)^2\rightarrow\RR$ be an integrable function. Recall that $(x,y)\in(0,1)^2$ is a called a \emph{Lebesgue point} of $f$ if we have
\begin{equation}
	\label{LebesguePoint}
	\lim\limits_{r\rightarrow 0+}\frac{1}{\lambda(M_r(x,y)))}\int_{(s,t)\in M_r}|f(s,t)-f(x,y)|\differential(\lambda^2)=0,
\end{equation}
where $M_r(x,y) = [x-r,x+r]\times [y-r,y+r]$.
Recall that $(x,y)\in\RR^2$ is a \emph{point of density} of a set $A\subseteq \RR^2$ if
\begin{equation}
\label{density}
\lim\limits_{r\rightarrow 0+}\frac{\lambda(M_r(x,y)\setminus A)}{\lambda(M_r(x,y)))}=0.
\end{equation}
Recall also that a function $f\colon (0,1)^2\rightarrow\RR$ is said to be \emph{approximately continuous} at $(x,y)\in(0,1)^2$ if for every $\varepsilon>0$, the point $(x,y)$ is a point of density of the set $\{(s,t)\in(0,1)^2 \colon |f(s,t)-f(x,y)|<\varepsilon\}$. 

We will use the following classical result (see e.g. \cite[Theorem 7.7]{Rudin}).

\begin{theorem}
	\label{theorem:LP}
	Let $f\colon (0,1)^2\rightarrow\RR$ be an integrable function. Then almost every point of $(0,1)^2$ is a Lebesgue point of $f$. In particular, we have
\begin{equation*}
	\lim\limits_{r\rightarrow 0+}\frac{1}{\lambda(M_r(x,y)))}\int_{(s,t)\in M_r}f(s,t)\differential(\lambda^2)=f(x,y)
\end{equation*}
for almost every $(x,y)\in (0,1)^2$.\qed
\end{theorem}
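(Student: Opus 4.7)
The plan is to prove the Lebesgue differentiation theorem on $(0,1)^2$ (with axis-aligned square neighborhoods $M_r$) via the classical Hardy--Littlewood maximal function approach, and then derive the ``in particular'' statement as an immediate consequence.

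First, I would handle the dense subclass of continuous compactly supported functions. If $g\in C_c((0,1)^2)$, uniform continuity yields $|g(s,t)-g(x,y)|<\epsilon$ uniformly on $M_r(x,y)$ for small enough $r$, so the limit in~\eqref{LebesguePoint} is $0$ at \emph{every} point $(x,y)$. By standard approximation, $C_c((0,1)^2)$ is dense in $L^1((0,1)^2)$.

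Second, I would introduce the centered maximal operator
\[
Mf(x,y)=\sup_{r>0}\frac{1}{\lambda(M_r(x,y))}\int_{M_r(x,y)}|f(s,t)|\,\differential(\lambda^2),
\]
and prove the weak-type $(1,1)$ inequality: there is an absolute constant $C$ with $\lambda(\{Mf>\alpha\})\leq C\|f\|_1/\alpha$ for every $\alpha>0$. This is the main technical step, and it proceeds through a Vitali-type covering lemma: any finite subfamily of the boxes witnessing $Mf>\alpha$ can be thinned to a pairwise disjoint subcollection whose threefold dilations still cover the original union, giving a geometric loss of at most $3^2=9$ in measure, while each selected box contributes $\lambda(M_r)\leq \alpha^{-1}\int_{M_r}|f|$. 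Summing and letting the finite subfamily exhaust the superlevel set yields the desired inequality with $C=9$.

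Third, I would combine the two ingredients in the usual density argument. For $f\in L^1$ and $\epsilon>0$, pick $g\in C_c((0,1)^2)$ with $\|f-g\|_1<\epsilon^2$, and set $h=f-g$. Writing
\[
\Lambda f(x,y)=\limsup_{r\to0+}\frac{1}{\lambda(M_r(x,y))}\int_{M_r(x,y)}|f(s,t)-f(x,y)|\,\differential(\lambda^2),
\]
and using $|f-f(x,y)|\leq|g-g(x,y)|+|h|+|h(x,y)|$ together with step one, I obtain $\Lambda f(x,y)\leq Mh(x,y)+|h(x,y)|$. Hence
\[
\lambda\bigl(\{\Lambda f>2\epsilon\}\bigr)\leq \lambda\bigl(\{Mh>\epsilon\}\bigr)+\lambda\bigl(\{|h|>\epsilon\}\bigr)\leq (C+1)\|h\|_1/\epsilon\leq (C+1)\epsilon.
\]
Since $\epsilon>0$ is arbitrary, $\Lambda f=0$ almost everywhere, which is precisely the statement that almost every point is a Lebesgue point. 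The ``in particular'' clause is then immediate by the triangle inequality
\[
\Bigl|\tfrac{1}{\lambda(M_r)}\textstyle\int_{M_r}f\,\differential(\lambda^2)-f(x,y)\Bigr|\leq\tfrac{1}{\lambda(M_r)}\textstyle\int_{M_r}|f-f(x,y)|\,\differential(\lambda^2).
\]

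The hard part is the weak-$(1,1)$ maximal inequality, since it is the only step that uses geometry of $\RR^2$ in a nontrivial way; the rest is soft functional-analytic approximation. Everything after that is bookkeeping.
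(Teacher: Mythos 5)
This is a classical result that the paper does not prove at all --- it simply cites \cite[Theorem 7.7]{Rudin} --- and your argument is precisely the standard proof found there: reduce to $C_c$ via the weak-type $(1,1)$ bound for the Hardy--Littlewood maximal operator, itself obtained from a Vitali-type covering lemma. The proposal is correct (the only cosmetic point being that near the boundary one should extend $f$ by zero or note that $M_r(x,y)\subset(0,1)^2$ for small $r$), so nothing further is needed.
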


An easy consequence of the previous theorem is also the following classical result.

\begin{theorem}
\label{theorem:appcont}
Let $f\colon (0,1)^2\rightarrow\RR$ be a measurable function. Then $f$ is approximately continuous at almost every point of $(0,1)^2$.\qed
\end{theorem}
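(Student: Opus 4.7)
The plan is to derive approximate continuity from the Lebesgue density theorem, which itself is the special case of Theorem~\ref{theorem:LP} applied to indicator functions of measurable sets. The key observation is that, although approximate continuity at $(x,y)$ must be verified for every $\varepsilon>0$ and centred at the arbitrary real value $f(x,y)$, the density-of-the-rationals trick lets us reduce the verification to countably many test sets indexed by pairs of rationals.

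First, I would extract the density-point version: for any measurable $E\subset (0,1)^2$, Theorem~\ref{theorem:LP} applied to the integrable function $\mathbf 1_E$ yields
$$
\lim_{r\to 0+}\frac{\lambda(M_r(x,y)\cap E)}{\lambda(M_r(x,y))}=\mathbf 1_E(x,y)
$$
for almost every $(x,y)\in(0,1)^2$; in particular, almost every point of $E$ is a point of density of $E$. Next, for each pair of rationals $p<q$, set
$$
E_{p,q}=\bigl\{(s,t)\in(0,1)^2:p<f(s,t)<q\bigr\},
$$
and let $N_{p,q}\subset E_{p,q}$ be the null set of points of $E_{p,q}$ which fail to be density points of $E_{p,q}$. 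Put $N=\bigcup_{p<q,\,p,q\in\mathbb Q}N_{p,q}$, which is null as a countable union of null sets.

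Finally, I would verify approximate continuity at every $(x,y)\in(0,1)^2\setminus N$. Given $\varepsilon>0$, by density of $\mathbb Q$ in $\RR$ choose rationals $p,q$ with $f(x,y)-\varepsilon<p<f(x,y)<q<f(x,y)+\varepsilon$. Then $(x,y)\in E_{p,q}\setminus N_{p,q}$, so $(x,y)$ is a density point of $E_{p,q}$. Since
$$
E_{p,q}\subset\bigl\{(s,t)\in(0,1)^2:|f(s,t)-f(x,y)|<\varepsilon\bigr\},
$$
the density condition passes to this superset, and as $\varepsilon>0$ was arbitrary, $f$ is approximately continuous at $(x,y)$.

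There is no serious obstacle; the argument is a standard separability reduction. The one subtlety worth flagging is that the rationals $p,q$ must be chosen \emph{after} fixing $(x,y)$, since $f(x,y)$ is allowed to be irrational. This is precisely why the construction of $N$ as a single countable union (independent of $(x,y)$ and $\varepsilon$) is needed.
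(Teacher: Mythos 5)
Your argument is correct and is precisely the standard derivation the paper has in mind: the paper states Theorem~\ref{theorem:appcont} without proof, merely noting it is ``an easy consequence'' of Theorem~\ref{theorem:LP}, and your separability reduction via the countably many level sets $E_{p,q}$ and the Lebesgue density theorem (Theorem~\ref{theorem:LP} applied to $\mathbf{1}_{E_{p,q}}$, which is integrable on $(0,1)^2$ even though $f$ itself is only assumed measurable) is exactly that consequence. The one subtlety you flag --- choosing $p,q$ after fixing $(x,y)$ while keeping the exceptional null set $N$ independent of $(x,y)$ and $\varepsilon$ --- is indeed the only point requiring care, and you handle it correctly.
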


\subsection{Alternative formulas for $\kappa(W)$}\label{ssec:AltKappa}
First, let us prove Fact~\ref{fact:kappaalt} which gives our first identity for $\kappa(W)$.
\begin{proof}[Proof of Fact~\ref{fact:kappaalt}]
If there exists a nonzero histogram $h$ such that $\int_{(u,v)\in\Omega^2} h(u)h(v)\log W(u,v)\differential(\nu^2)=0$ then clearly both suprema in~\eqref{eq:defKappaSimple} and \eqref{eq:defKappa} are $+\infty$.

Let $h$ be an arbitrary histogram in~\eqref{eq:defKappaSimple} and suppose that $\int_{(u,v)\in\Omega^2} h(u)h(v)\log\left(\nicefrac{1}{W(u,v)}\right)\differential(\nu^2)>0$. We take $$f=\frac{2\|h\|_1\cdot h}{\int_{(u,v)\in\Omega^2} h(u)h(v)\log\left(\nicefrac{1}{W(u,v)}\right)\;\differential(\nu^2)}\;.$$ This way, the function $f$ is a histogram on $\Omega$, and $\|f\|_1$ equals to the term in the supremum in~\eqref{eq:defKappaSimple}. So, to justify that the right-hand side of~\eqref{eq:defKappa} is at least as big as that of~\eqref{eq:defKappaSimple}, we only need to show that $\Gamma(f,W)\ge 0$. Indeed,
\begin{align*}
\Gamma(f,W)&=\tfrac{2\|h\|_1}{\int_{(u,v)\in\Omega^2} h(u)h(v)\log\left(\nicefrac{1}{W(u,v)}\right)\differential(\nu^2)}\cdot\int_{x\in\Omega}  h(x)\differential(\nu)\\
&~~~+\frac{1}2\cdot\left(\tfrac{2\|h\|_1}{\int_{(u,v)\in\Omega^2}h(u)h(v)\log\left(\nicefrac{1}{W(u,v)}\right)\differential(\nu^2)}\right)^2\cdot\int_{(x,y)\in\Omega^2} h(x)h(y)\log W(x,y)\differential(\nu^2)\\
&=0\;.
\end{align*}

On the other hand, let $f$ be an arbitrary histogram appearing in~\eqref{eq:defKappa} such that $$\int_{(u,v)\in\Omega^2} f(u)f(v)\log\left(\nicefrac{1}{W(u,v)}\right)\differential(\nu^2)>0\;.$$
For $c\in\RR$, let us denote by $cf$ the $c$-multiple of the function $f$.
Then the map $c \mapsto \Gamma(cf,W)$ is clearly a quadratic function with the limit $-\infty$ at $+\infty$.
And since $\Gamma(f,W)\geq0$, there is $c_0\ge 1$ such that $\Gamma(c_0f,W)=0$.
Now if we define $h=c_0f$ then the corresponding term in~\eqref{eq:defKappaSimple} equals to $\|h\|_1=c_0\|f\|_1\ge\|f\|_1$.
\end{proof}

In Fact~\ref{fact:kappaalt} we gave an alternative formula for $\kappa(W)$. In Proposition~\ref{proposition:differentkappa} we give two further expressions. These expressions require the graphon $W$ to be changed on a nullset. Note that we have the liberty of making such a change as the distribution of the model $\RG(n,W)$ remains unaltered.
\begin{proposition}\label{proposition:differentkappa}
Suppose that $W: (0,1)^2\rightarrow[0,1]$ is an arbitrary graphon. Then $W$ can be changed on a nullset in such a way that the following holds:

We have
	\begin{align}
	\label{eq:kappa1}
	\exp\left(-\frac2{\kappa(W)}\right)&=\lim_{r\rightarrow \infty} P_r\quad \mbox{where} \quad P_r=\sup_{F\subset (0,1), |F|=r} \left(\prod_{x,y\in F, x<y} W(x,y)\right)^{2/(|F|^2-|F|)}\;\mbox{and}\\
	\label{eq:kappa2}
	\frac{2}{\kappa(W)}&=\inf_{A}\frac{1}{\lambda(A)^2}\int_{(x,y)\in A^2}\log\left( \nicefrac1{W(x,y)}\right)\differential(\lambda^2)\;,
	\end{align}
	where the infimum ranges over all measurable sets $A\subset (0,1)$ of positive measure.

Moreover, \begin{equation}\label{eq:defKappaSets}
\kappa(W)=\sup_{a\in\mathbb R_+,A\subset\Omega}\left\{a\lambda(A) \::\: \Gamma(a\cdot\mathbf{1}_A,W)\ge 0\right\}\;.
\end{equation}
More precisely, for each $\epsilon>0$ and each set $A$ from~\eqref{eq:kappa2} satisfying $(1+\epsilon)\frac{2}{\kappa(W)}\ge  \frac{1}{\lambda(A)^2}\int_{(x,y)\in A^2}\log(\nicefrac{1}{W(x,y)})\differential(\lambda^2)$ we have that for the number $a=(1-\epsilon)\frac{\kappa(W)}{\lambda(A)}$ that $\Gamma(a\cdot\mathbf{1}_A,W)\ge 0$.
\end{proposition}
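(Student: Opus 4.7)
The plan is to prove the three identities in the order \eqref{eq:kappa2}, \eqref{eq:defKappaSets}, \eqref{eq:kappa1}, with the nullset modification of $W$ introduced only in the third step.

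For \eqref{eq:kappa2}, writing $\phi(A):=\lambda(A)^{-2}\int_{A^{2}}\log\nicefrac{1}{W}$, the inequality $\tfrac{2}{\kappa(W)}\le\inf_A\phi(A)$ is immediate by taking $h=\mathbf{1}_A$ in the sup defining $\kappa(W)$. For the converse, given a nontrivial histogram $h$ normalised to $\|h\|_1=1$, view $d\mu:=h\,d\lambda$ as a probability measure on $(0,1)$, so that $\kappa(W)\ge 2/\psi(h)$ with $\psi(h):=\iint\log\nicefrac{1}{W}\,d\mu\otimes d\mu$. Sample iid $x_1,\ldots,x_r\sim\mu$; the strong LLN for $U$-statistics gives a.s.\ convergence of $\frac{1}{r(r-1)}\sum_{i\neq j}\log\nicefrac{1}{W(x_i,x_j)}$ to $\psi(h)$, and since $\mu\otimes\mu\ll\lambda^{2}$, Theorem~\ref{theorem:LP} ensures that a.s.\ each pair $(x_i,x_j)$ is a Lebesgue point of $\log\nicefrac{1}{W}$. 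Fix such a sample and smear it to $A_\delta:=\bigcup_i[x_i-\delta,x_i+\delta]$, disjoint for small $\delta$. Decomposing
\[
\int_{A_\delta^{2}}\log\nicefrac{1}{W}=\sum_{i\neq j}\int_{[x_i-\delta,x_i+\delta]\times[x_j-\delta,x_j+\delta]}\log\nicefrac{1}{W}+\sum_i\int_{[x_i-\delta,x_i+\delta]^{2}}\log\nicefrac{1}{W},
\]
the off-diagonal boxes yield $(2\delta)^{2}\log\nicefrac{1}{W(x_i,x_j)}(1+o_\delta(1))$ by the Lebesgue-point property while the diagonal boxes are bounded by $O((2\delta)^{2}\|\log\nicefrac{1}{W}\|_\infty)$, under a preliminary truncation $W\mapsto\max(W,\eta)$ which enforces boundedness. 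Dividing by $\lambda(A_\delta)^{2}=(2r\delta)^{2}$, the off-diagonal contribution reproduces $\tfrac{r-1}{r}$ times the empirical average (close to $\psi(h)$) while the diagonal contributes $O(1/r)$, so $\phi(A_\delta)\le\psi(h)+\varepsilon$ for $r$ large and $\delta$ small. Letting $\eta\to 0$ via monotone convergence recovers the general case.

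Equation \eqref{eq:defKappaSets} now follows from \eqref{eq:kappa2} by direct computation: a scaled indicator satisfies $\Gamma(a\mathbf{1}_A,W)=a\lambda(A)-\tfrac{a^{2}}{2}\lambda(A)^{2}\phi(A)$, so $\Gamma\ge 0\iff a\lambda(A)\phi(A)\le 2$, maximising $a\lambda(A)$ at $2/\phi(A)$. Supremising in $A$ and applying \eqref{eq:kappa2} yields $\kappa(W)$. The quantitative refinement follows from the same calculation: with $a=(1-\varepsilon)\kappa(W)/\lambda(A)$ and the hypothesis $\phi(A)\le(1+\varepsilon)\tfrac{2}{\kappa(W)}$, one computes $a\lambda(A)\phi(A)/2\le(1-\varepsilon)(1+\varepsilon)=1-\varepsilon^{2}\le 1$, hence $\Gamma(a\mathbf{1}_A,W)\ge 0$.

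Finally, for \eqref{eq:kappa1}, set $\alpha_r:=-\log P_r$. The upper bound $\limsup\alpha_r\le 2/\kappa(W)$ comes from sampling iid from the uniform measure on a near-optimal $A$ in \eqref{eq:kappa2} and reusing the $U$-statistic LLN. The matching lower bound reuses the smearing construction in reverse: to every $F$ of size $r$ associate the set $A_\delta$ with $\phi(A_\delta)\le(\text{empirical average of }F)+O(1/r)$, and \eqref{eq:kappa2} then forces the empirical average to be at least $2/\kappa(W)-o(1)$. For the Lebesgue-type estimate to apply to \emph{every} finite set $F$ (not merely iid samples from an absolutely continuous measure), I redefine $W$ on the Theorem~\ref{theorem:LP} nullset of non-Lebesgue points by $\tilde W(x,y):=\exp\bigl(-\limsup_{\delta\to 0}(2\delta)^{-2}\int_{M_\delta(x,y)}\log\nicefrac{1}{W}\bigr)$; this leaves integrals (and thus $\kappa(W)$) unchanged and enforces the one-sided bound $(2\delta)^{-2}\int_{M_\delta(x,y)}\log\nicefrac{1}{W}\le\log\nicefrac{1}{\tilde W(x,y)}+o_\delta(1)$ at every point. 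The crux of the argument is the smearing step: uniformly controlling the Lebesgue approximation across finitely many sample pairs, handling the diagonal integrals without a direct pointwise estimate, and, in the unbounded case, carefully interleaving the three limits $r\to\infty$, $\delta\to 0$, and the truncation level $\eta\to 0$.
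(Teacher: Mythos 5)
Your proposal follows essentially the same strategy as the paper's proof. The inequality $\log P_r\ge -2/\kappa(W)$ is obtained by sampling $r$ i.i.d.\ points from the density $h/\|h\|_1$ of a near-optimal histogram (the paper uses plain linearity of expectation where you invoke the LLN for $U$-statistics; either works). The reverse inequality, and the passage from histograms to sets in \eqref{eq:kappa2}, are in both cases obtained by smearing a finite point configuration into a disjoint union of short intervals, controlling the off-diagonal boxes by a Lebesgue-point/approximate-continuity estimate and the diagonal boxes by an $O(1/r)$ term; and your verification of \eqref{eq:defKappaSets} is the paper's computation verbatim. The only structural difference is organizational: the paper closes one cyclic chain $\liminf_r\log P_r\ge-\tfrac{2}{\kappa(W)}\ge -\inf_A\phi(A)\ge\limsup_r\log P_r$ (with your $\phi(A)=\lambda(A)^{-2}\int_{A^2}\log\nicefrac1W$), which yields \eqref{eq:kappa1} and \eqref{eq:kappa2} simultaneously, whereas you run the smearing argument twice. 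Your nullset modification --- resetting $W$ at non-Lebesgue points to the exponential of minus the $\limsup$ of local averages of $\log\nicefrac1W$ --- is a legitimate alternative to the paper's (which resets non-approximate-continuity points to a constant); the one-sided bound it enforces is exactly what the smearing step needs at the points of an arbitrary $F$.

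The one step that does not work as written is your treatment of the case $\essinf W=0$. After truncating to $W_\eta=\max(W,\eta)$ you obtain $\phi_\eta(A_\delta)\le\psi_\eta(h)+\varepsilon$, but \eqref{eq:kappa2} requires an upper bound on $\phi(A_\delta)\ge\phi_\eta(A_\delta)$, and the discrepancy $\lambda(A_\delta)^{-2}\int_{A_\delta^2}\log(W_\eta/W)$ is not controlled by anything in your argument. ``Letting $\eta\to0$ by monotone convergence'' only gives $\lim_\eta\inf_A\phi_\eta(A)\le 2/\kappa(W)$, and an infimum of an increasing family of functionals can converge to something strictly smaller than the infimum of the limit functional, so the inequality points the wrong way; the same issue affects the diagonal boxes in your proof of \eqref{eq:kappa1}. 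In fairness, the paper's own proof carries the same tacit restriction (its bounds $\int_{D_{ij}}\log W\ge\lambda(D_{ij})\log c$ and the term $\tfrac{\log c}{r}$ require $c>0$), and every application of the proposition has $\essinf W>0$; but if you intend to cover general graphons you need a device other than truncation, e.g.\ working directly with the $[0,+\infty]$-valued kernel $\log\nicefrac1W$ and disposing separately of the histograms and sets on which it fails to be integrable.
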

\begin{remark*}\Referee{measures}
We can rewrite~\eqref{eq:kappa2} as $\frac{2}{\kappa(W)}=\inf_{h}\frac{1}{\|h\|_1^2}\int_{(x,y)\in \Omega^2}\log \left(\nicefrac1{W(x,y)}\right)h(x)h(y)\differential(\lambda^2)$, where $h$ ranges over all indicator functions on~$(0,1)$. Our proof of Proposition~\ref{proposition:differentkappa} could be easily modified to show that in the infimum, we can range over all histograms~$h$ instead. Since the Radon--Nikodym theorem gives us a one-to-one correspondence between non-negative $L^1$-functions and absolutely continuous measures, we get that
$$
	\frac{2}{\kappa(W)}=\inf_{\pi}\int_{(x,y)\in (0,1)^2}\left(\log \nicefrac1{W(x,y)}\right)\differential(\pi^2)\;,
$$
	where the infimum ranges over all probability measures $\pi$ on $(0,1)$ that are absolutely continuous with respect to the Lebesgue measure. While we shall not need this identity we remark that it can be used to derive a heuristic --- slightly different from that given in Section~\ref{sec:formulaheuristic} --- for Theorem~\ref{thm:main}.
\end{remark*}
\begin{proof}[Proof of Proposition~\ref{proposition:differentkappa}]
	Let us replace the value of $W$ at every point $(x,y)\in(0,1)^2$ that is not a point of approximate continuity by $c$. This is a change of measure zero by Theorem~\ref{theorem:appcont}.
	
	Let us deal with the first part of the statement postponing~\eqref{eq:defKappaSets} to later.
			
	\begin{claim}\label{cl:step1}
		For each $r\in\{2,3,\ldots\}$ we have $\log P_r\ge -\frac{2}{\kappa(W)}$.
	\end{claim}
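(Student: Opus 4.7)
The plan is to combine the variational characterization of $\kappa(W)$ given by Fact~\ref{fact:kappaalt} with a first-moment argument. We may assume $\kappa(W)>0$, for otherwise the statement is vacuous. Fix $\epsilon>0$ and use Fact~\ref{fact:kappaalt} to pick a histogram $f\colon(0,1)\to\RR_+$ with $\Gamma(f,W)\ge 0$ and $\|f\|_1\ge\kappa(W)-\epsilon$ (or $\|f\|_1\ge 1/\epsilon$ when $\kappa(W)=+\infty$; this case is handled by the very same argument and then letting $\epsilon\to 0$). The condition $\Gamma(f,W)\ge 0$ unwinds to
\[
\int_{(x,y)\in(0,1)^2} f(x)f(y)\log\nicefrac{1}{W(x,y)}\,\differential(\lambda^2)\le 2\|f\|_1,
\]
so, after rescaling by $\|f\|_1$, the probability density $\rho=f/\|f\|_1$ on $(0,1)$ satisfies $\int \rho(x)\rho(y)\log\nicefrac{1}{W(x,y)}\,\differential(\lambda^2)\le 2/\|f\|_1$.

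Next, sample $r$ independent points $X_1,\ldots,X_r$ from the law $\rho\,\differential\lambda$. Since this law is absolutely continuous with respect to Lebesgue measure, the points are pairwise distinct almost surely and hence form an $r$-element set $F\subset(0,1)$ with probability one. The identity
\[
\Exp\sum_{i\ne j}\log\nicefrac{1}{W(X_i,X_j)}= r(r-1)\int\rho(x)\rho(y)\log\nicefrac{1}{W(x,y)}\,\differential(\lambda^2)\le\frac{2r(r-1)}{\|f\|_1}
\]
and the first-moment principle produce a realization $F$ with $\sum_{x,y\in F,\,x<y}\log(1/W(x,y))\le r(r-1)/\|f\|_1$, or equivalently
\[
\Bigl(\prod_{x,y\in F,\,x<y}W(x,y)\Bigr)^{2/(r^2-r)}\ge \exp(-2/\|f\|_1).
\]
Plugging this lower bound into the definition of $P_r$ from \eqref{eq:kappa1} gives $\log P_r\ge -2/\|f\|_1$, and letting $\epsilon\to 0$ yields $\log P_r\ge -2/\kappa(W)$, as claimed.

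There is no substantial obstacle. The conceptual content is that the inequality $\Gamma(f,W)\ge 0$ is precisely what is needed to upper-bound, by $2/\|f\|_1$, the expected value of $\log(1/W)$ at a pair of points drawn according to the density proportional to $f$; distinctness of the samples is automatic because $\rho\in L^1(\lambda)$ induces an atomless probability measure on $(0,1)$.
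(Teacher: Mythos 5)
Your proposal is correct and follows essentially the same route as the paper: sample $r$ i.i.d.\ points from the probability density proportional to a (near-)optimal histogram, compute the expected value of $\sum_{x<y}\log(1/W(x,y))$ by linearity, and invoke the first-moment principle to extract a deterministic set $F$ witnessing the bound on $P_r$. The only cosmetic difference is that you route the choice of histogram through the admissibility condition $\Gamma(f,W)\ge 0$ of Fact~\ref{fact:kappaalt}, whereas the paper works directly with the ratio in~\eqref{eq:defKappaSimple}; these are equivalent, so no substantive divergence.
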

	\begin{proof}[Proof of Claim~\ref{cl:step1}]
	Let $h$ be an arbitrary function appearing in~\eqref{eq:defKappaSimple} (not constant zero). Fix $r\in\{2,3,\ldots\}$, and let $F\subset (0,1)$ be a random set consisting of $r$ independent points sampled from $(0,1)$ according to the density function $d=h/\|h\|_1$. Then by linearity of expectation we have
	\begin{align*}
	\Exp_{F}\left[\tfrac{2}{r(r-1)}\sum_{x,y\in F,x<y}\log W(x,y)\right]=\Exp_{x,y\sim d}\left[\log W(x,y)\right]=\frac{1}{\|h\|_1^2}\int_{(x,y)\in\Omega^2} h(x)h(y)\log W(x,y)\;\differential(\lambda^2)\;.
	\end{align*}
	This shows that there exists a deterministic $r$-element set $F$ for which $$\frac{2}{r(r-1)}\sum_{x,y\in F,x<y}\log W(x,y)\ge \frac{1}{\|h\|_1^2}\int_{(x,y)\in(0,1)^2} h(x)h(y)\log W(x,y)\differential(\lambda^2)\;.$$
	This concludes the proof of Claim~\ref{cl:step1}.
	\end{proof}

	Let us denote the right-hand side of~\eqref{eq:kappa2} as $-P$.

	\begin{claim}\label{cl:step2}
		For each $r\in\{2,3,\ldots\}$, we have that $P\ge \frac{r-1}{r}\log P_r+\frac{\log c}{r}$.
	\end{claim}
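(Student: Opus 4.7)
My plan is to show that for any $r$-element set $F=\{x_1,\dots,x_r\}\subset(0,1)$ whose off-diagonal pairs $(x_i,x_j)$ are Lebesgue points of $\log W$, one can build a positive-measure set $A\subset(0,1)$ with
$$
\frac{1}{\lambda(A)^2}\int_{A^2}\log W\,\differential(\lambda^2)\ge \frac{1}{r^2}\sum_{i\ne j}\log W(x_i,x_j)+\frac{\log c}{r}-o_\delta(1),
$$
where $o_\delta(1)\to 0$ as an auxiliary scale $\delta\to 0$. Since $P$ is the supremum of the left-hand side over $A$, and since $\tfrac{r-1}{r}\log P_r=\sup_F\tfrac{1}{r^2}\sum_{i\ne j}\log W(x_i,x_j)$, taking suprema then yields the claim.

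First, I reduce to the case $c>0$: when $c=0$ we have $\log c=-\infty$ and the inequality is vacuous. Next I observe that after the modification at the start of the proof of Proposition~\ref{proposition:differentkappa}, we have $W\ge c$ pointwise on $(0,1)^2$. At non-approximate-continuity points this holds by construction; at approximate-continuity points it follows from a standard density argument, since if $W(x,y)<\essinf W$ at such a point, then the density-one set $\{|W-W(x,y)|<\epsilon\}$ would force a positive-measure set on which $W<\essinf W$, contradicting the definition of $\essinf$. Taking $c=\essinf W$ makes $\log W\ge\log c>-\infty$, so $\log W$ is bounded and integrable and the Lebesgue differentiation theorem applies.

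The construction itself is then routine. Given a ``good'' $F$ and small $\delta>0$, take pairwise disjoint intervals $A_i=(x_i-\delta,x_i+\delta)$ and set $A=\bigcup_i A_i$, so $\lambda(A)=2r\delta$. Decompose
$$
\int_{A^2}\log W=\sum_{i\ne j}\int_{A_i\times A_j}\log W+\sum_i\int_{A_i^2}\log W.
$$
The off-diagonal pieces equal $(2\delta)^2\log W(x_i,x_j)+o((2\delta)^2)$ by the Lebesgue differentiation theorem at $(x_i,x_j)$, while each diagonal piece is bounded below by $(2\delta)^2\log c$ via $\log W\ge\log c$. Dividing by $\lambda(A)^2=r^2(2\delta)^2$ and letting $\delta\to 0$ delivers the desired $P\ge\tfrac{1}{r^2}\sum_{i\ne j}\log W(x_i,x_j)+\tfrac{\log c}{r}$ for the chosen $F$.

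The main obstacle is converting this into a bound in terms of $\log P_r$ itself: the supremum defining $P_r$ ranges over \emph{all} $r$-subsets, whereas the construction requires $F$'s pairs to be Lebesgue points of $\log W$. I plan to handle this by a Fubini-style perturbation: given an arbitrary $F^*$ close to the supremum, produce a nearby ``good'' $F=(y_1,\dots,y_r)$ with $\log W(y_i,y_j)\ge\log W(x_i^*,x_j^*)-\epsilon$ for every $i\ne j$. Pair by pair, this uses approximate continuity of $W$ at $(x_i^*,x_j^*)$ when it holds, and the modification identity $W(x_i^*,x_j^*)=c$ together with the pointwise bound $\log W\ge\log c$ otherwise. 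A union-bound estimate within the product box around $F^*$ shows the intersection of the pairwise good events retains positive measure, so such an $F$ (which we may also take to consist of Lebesgue points, since the bad set in $(0,1)^r$ has measure zero) exists. Letting $\epsilon\to 0$ then gives $\sup_{\text{good }F}\tfrac{1}{r(r-1)}\sum_{i\ne j}\log W(x_i,x_j)=\log P_r$, which completes the proof.
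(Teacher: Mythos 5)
Your proof is correct, and its core construction --- the set $A=\bigcup_i A_i$ of small disjoint intervals around the points of $F$, with the off-diagonal blocks $A_i\times A_j$ controlled by differentiation of $\log W$ and the diagonal blocks bounded below via $\log W\ge\log c$ --- is the same as the paper's. The two arguments differ only in how they bridge the gap between ``all $r$-sets $F$'' (over which $P_r$ is a supremum) and ``$r$-sets at which local control of $W$ is available''. You prove the integral estimate only for $F$ whose pairs are Lebesgue points of $\log W$, and then add a separate perturbation step replacing an arbitrary near-optimal $F^*$ by a nearby good $F$ with $\log W(y_i,y_j)\ge\log W(x_i^*,x_j^*)-\epsilon$; the dichotomy you invoke there (approximate continuity of $W$ at $(x_i^*,x_j^*)$ versus that point having been reset to $c$, combined with $W\ge c$ pointwise) is exactly the dichotomy the paper uses, except that the paper uses it \emph{inside} the interval construction for an arbitrary $F$: it chooses $\delta$ so that each exceptional set $D_{ij}=\{(x,y)\in S_i\times S_j: W(x,y)<W(x_i,x_j)-\epsilon\}$ has relative measure at most $\epsilon$, which holds by density at approximate-continuity points and is automatic (a null set) at modified points since there $W(x_i,x_j)=c$ while $W\ge c$ almost everywhere. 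This renders the perturbation step unnecessary. Both routes are sound: yours costs an extra Fubini/union-bound argument in $(0,1)^r$ but keeps the main estimate a clean Lebesgue-point limit, whereas the paper pays instead with the explicit correction term $C=\log c-\log(c-\epsilon)$. Your preliminary reductions (vacuousness when $c=0$; the pointwise bound $W\ge c$ after the modification, which makes $\log W$ bounded and integrable) are needed and correctly justified.
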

	\begin{proof}[Proof of Claim~\ref{cl:step2}]
	Suppose that $r\in\{2,3,\ldots\}$ is given, and let $F=\{x_1<x_2<\ldots<x_r\}$ be an arbitrary set of points in $(0,1)$ as in~\eqref{eq:kappa1}. Let $\epsilon\in(0,c/2)$ be arbitrary. Let $C=\log c-\log(c-\epsilon)$. Firstly, note that the concavity of the logarithm gives that 
	\begin{equation}\label{eq:logconc}
	\log(a-\epsilon)\ge\log a -C
	\end{equation} for each $a\in[c,\infty)$. Secondly, note that $C\searrow 0$ as $\epsilon\searrow 0$.
	
	Let us take $\delta>0$ such that for each $i\in[r]$, we have that the sets $S_i=[x_i-\delta,x_i+\delta]$ are pairwise disjoint, and such that the measure of each of the sets $D_{ij}=\{(x,y)\in S_i\times S_j: W(x_i,x_j)-\epsilon>W(x,y) \}$ is at most $\epsilon (2\delta)^2$. The latter property can be achieved since each point $(x_i,x_j)$ is either a Lebesgue point of $W$, or it is a point attaining the infimum of $W$. Let $A=\bigcup_{i=1}^r S_i$. Then,
	\begin{align*}
	\frac{1}{\lambda(A)^2}&\int\limits_{(x,y)\in A^2}\log W(x,y)\differential(\lambda^2)\\
	&=\frac{1}{(2\delta r)^2}\sum_i\sum_j\left(\int\limits_{(x,y)\in (S_i\times S_j)\setminus D_{ij}}\log W(x,y)\;\differential(\lambda^2)+\int\limits_{(x,y)\in D_{ij}}\log W(x,y)\;\differential(\lambda^2)\right)\\
	&\ge
	\frac{1}{(2\delta r)^2}\sum_i\sum_j\left(\int\limits_{(x,y)\in (S_i\times S_j)\setminus D_{ij}}\log (W(x_i,x_j)-\epsilon)
\;\differential(\lambda^2)+\int\limits_{(x,y)\in D_{ij}}\log c\;\differential(\lambda^2)\right)\;.
	\end{align*}
Now, let us use~\eqref{eq:logconc} for the first term and the fact that $\lambda(D_{ij})\le \epsilon(2\delta)^2$ for the second term.
Thus,
\begin{align*}
	\frac{1}{\lambda(A)^2}&\int_{(x,y)\in A^2}\log W(x,y)\differential(\lambda^2)\\&\ge
	\frac{1}{r^2}\sum_i\sum_j\log W(x_i,x_j)\;-C+\epsilon\log c\\
	&\ge 
	\frac{r-1}{r}\cdot\frac{2}{r(r-1)}\sum_{i,j:i<j}\log W(x_i,x_j)\;+\frac{1}{r^2}\sum_i\log W(x_i,x_i)-C+\epsilon\log c\\
	&\ge 
	\frac{r-1}{r}\cdot\frac{2}{r(r-1)}\sum_{i,j:i<j}\log W(x_i,x_j)\;-C+(\epsilon+\tfrac1r)\log c\;.
	\end{align*}
	Letting $\epsilon\searrow 0$ (which means that also $C\searrow0$), we get the claim.
	\end{proof}

By Claim~\ref{cl:step1}, we have $\liminf_{r\rightarrow\infty} \log P_r\ge-\tfrac{2}{\kappa(W)}$. By Claim~\ref{cl:step2}, we have $P\ge\limsup_{r\rightarrow\infty} \log P_r$. Further, it is obvious that $-\frac{2}{\kappa(W)}\ge P$: Indeed, the supremum in~\eqref{eq:defKappaSimple} ranges over all histograms, of which indicators of measurable sets are just a particular case. The combination of the three above inequalities proves the fact.
	
So, it remains to deal with~\eqref{eq:defKappaSets}. Positive multiples of indicator functions are histograms, so~\eqref{eq:defKappa} tells us that $\kappa(W)\ge\sup_{a\in\mathbb R_+,A\subset\Omega}\left\{a\lambda(A) \::\: \Gamma(a\cdot\mathbf{1}_A,W)\ge 0\right\}$. It remains to deal with the opposite inequality. We shall prove this in the ``more precisely'' form. Let $\epsilon>0$ be arbitrary and take $A$ such that $(1+\epsilon)\frac{2}{\kappa(W)}\ge \frac{1}{\lambda(A)^2}\int_{(x,y)\in A^2} \log(\nicefrac{1}{W(x,y)})\differential(\lambda^2)$. Set $a=(1-\epsilon)\frac{\kappa(W)}{\lambda(A)}$. We claim that the pair $(a,A)$ is admissible for the supremum in~\eqref{eq:defKappaSets}. Indeed,\Referee{13}
\begin{align*}
\Gamma(a\cdot\mathbf{1}_A,W)&\eqByRef{eq:A}a\lambda(A)-\tfrac12a^2\int_{(x,y)\in A^2}-\log W(x,y)\differential(\lambda^2)\\
&=(1-\epsilon)\kappa(W)\left(1-\tfrac12(1-\epsilon)\kappa(W)\frac{1}{\lambda(A)^2}\int_{(x,y)\in A^2}\log(\nicefrac{1}{W(x,y)})\differential(\lambda^2)\right)\\
&\ge
(1-\epsilon)\kappa(W)\left(1-\tfrac12(1-\epsilon)\kappa(W)(1+\epsilon)\frac{2}{\kappa(W)}\right)=\epsilon^2(1-\epsilon)\kappa(W)>0\;.
\end{align*}
Since $a\cdot \lambda(A)=(1-\epsilon)\kappa(W)$, and since $\epsilon>0$ was arbitrary, this finishes the proof.
\end{proof}

\subsection{Exhaustion principle}
We recall the principle of exhaustion (see e.g. \cite[Lemma 11.12]{FHHMZ} for a more general statement).

\begin{lemma}
\label{lemma:exhaustion}
Let $\mathcal C$ be a collection of measurable subsets of $(0,1)$ with positive Lebesgue measure.
Suppose that for every $A\subseteq (0,1)$ with positive Lebesgue measure, there is $C\in\mathcal C$ such that $C\subseteq A$.
Then there is an at most countable subcollection $\mathcal B$ of $\mathcal C$ of pairwise disjoint sets such that $\sum\limits_{B\in\mathcal B}\lambda(B)=1$. \qed
\end{lemma}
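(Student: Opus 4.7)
The plan is to build $\mathcal{B}$ by a greedy recursive selection. Set $\mathcal B_0=\emptyset$. Having chosen pairwise disjoint $B_1,\ldots,B_{n-1}\in\mathcal C$, define
$$s_n=\sup\bigl\{\lambda(C) : C\in\mathcal C,\ C\cap(B_1\cup\cdots\cup B_{n-1})=\emptyset\bigr\},$$
and pick any $B_n\in\mathcal C$ disjoint from the previous ones with $\lambda(B_n)\ge s_n/2$. (The factor $\tfrac12$ is only to guarantee an admissible choice exists; any fixed fraction of $s_n$ would do.) If at some finite stage no set $C\in\mathcal C$ disjoint from $\bigcup_{i<n}B_i$ exists, stop.

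Next I would verify that the produced family $\mathcal B=\{B_n\}_n$ has the required measure. The two cases to consider are when the process stops in finitely many steps and when it runs forever. In the finite case, the hypothesis on $\mathcal C$ immediately gives $\lambda\!\left((0,1)\setminus\bigcup_{i<n}B_i\right)=0$: otherwise this leftover set, having positive measure, would contain some $C\in\mathcal C$ by assumption, contradicting termination. In the infinite case, the $B_i$'s are pairwise disjoint subsets of $(0,1)$, so $\sum_i\lambda(B_i)\le 1$, which forces $\lambda(B_n)\to 0$ and hence $s_n\to 0$. Suppose toward a contradiction that $\sum_i\lambda(B_i)<1$. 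Then $A:=(0,1)\setminus\bigcup_i B_i$ has positive measure, and by the assumption on $\mathcal C$ there exists $C\in\mathcal C$ with $C\subseteq A$. Such a $C$ is disjoint from every $B_i$, so $\lambda(C)\le s_n$ for all $n$, yielding $\lambda(C)=0$, contradicting $C\in\mathcal C$. Hence $\sum_i\lambda(B_i)=1$.

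There is no substantive obstacle here; this is the standard greedy exhaustion argument, and it works because the ambient measure is finite, so the selected measures $\lambda(B_n)$ must decay, which in turn forces $s_n\to 0$ and rules out any positive-measure remainder. The only minor point to be careful about is ensuring a valid choice of $B_n$ at each step when $s_n>0$, which is why one selects any set of measure at least $s_n/2$ instead of trying to attain the supremum (the supremum may not be realized inside $\mathcal C$).
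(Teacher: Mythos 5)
Your proof is correct. The paper does not prove this lemma at all --- it is quoted from the literature (\cite[Lemma~11.12]{FHHMZ}) --- and your argument is precisely the standard greedy exhaustion proof one would find there: select nearly-maximal disjoint members, use finiteness of the measure to force $s_n\to 0$, and conclude that a positive-measure remainder would contradict the hypothesis on $\mathcal C$. All the steps, including the termination case and the use of $\lambda(B_n)\ge s_n/2$ rather than an exact maximizer, are handled correctly.
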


\section{Concentration and oscillation: Proofs of Proposition~\ref{prop:wild} and Theorem~\ref{thm:concentration}}\label{sec:examples}
\subsection{Proof of Proposition~\ref{prop:wild}}
 We shall need the following well-known crude bound on the minimum difference between uniformly random points.
\begin{fact}\label{fact:minimumlarge}
Suppose that the numbers $x_1,\ldots,x_n$ are uniformly sampled from the interval $(0,1)$. Then asymptotically almost surely, $\min_{i\neq j}|x_i-x_j|>n^{-3}$.
\end{fact}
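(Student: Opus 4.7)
My plan is to prove this by a straightforward union bound over pairs.

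First, for any fixed indices $i \ne j$, the two coordinates $x_i$ and $x_j$ are independent uniform random variables on $(0,1)$, so by conditioning on $x_i$ and using that the conditional law of $x_j$ is uniform, the probability that $|x_i - x_j| \le n^{-3}$ is at most $2n^{-3}$ (the length of an interval of radius $n^{-3}$ around $x_i$, truncated to $(0,1)$).

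Next, I would apply the union bound over all $\binom{n}{2}$ unordered pairs $\{i,j\} \subset [n]$. This gives
\[
\Pr\!\left[\min_{i \ne j}|x_i - x_j| \le n^{-3}\right] \;\le\; \binom{n}{2}\cdot 2n^{-3} \;\le\; \frac{1}{n}\;,
\]
which tends to $0$ as $n \to \infty$. This yields the a.a.s.\ statement.

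I do not anticipate any real obstacle here; the exponent $3$ in $n^{-3}$ is chosen precisely to leave room for the $\binom{n}{2}$ factor from the union bound with plenty to spare (any exponent strictly greater than $2$ would work). The only mild care needed is in bounding the single-pair probability by $2n^{-3}$ rather than $n^{-3}$ when $x_i$ is close to the boundary of $(0,1)$, but this is immediate from the trivial bound $\lambda((x_i-n^{-3},x_i+n^{-3})\cap(0,1)) \le 2n^{-3}$.
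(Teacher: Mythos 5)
Your proof is correct and is exactly the standard union-bound argument that the paper implicitly relies on: the paper states this as a well-known fact without proof. The single-pair bound $2n^{-3}$ and the computation $\binom{n}{2}\cdot 2n^{-3}\le 1/n\to 0$ are both right.
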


Consider a sequence of positive numbers $1=a_1> a_2> a_3> \ldots>0$, with $\lim_{n\to\infty}a_n=0$, to be determined later. Define a graphon $W:(0,1)^2\rightarrow [0,1]$ as 
$$W(x,y)=
\left\{
\begin{array}{lr}
0 &  \mbox{if $a_{2i-1}\ge |x-y|>a_{2i}$,}\\
1 &  \mbox{if $a_{2i}\ge |x-y|>a_{2i+1}$.}
\end{array}
\right.
$$
Let us show how to achieve~\eqref{eq:Psismall}. Suppose that numbers $a_1,\ldots,a_{2i-1}$ were already set. Fix an arbitrary number $n$ large enough such that $n^{-3}<a_{2i-1}$ and $f(n)>1+1/a_{2i-1}$. Then, set $a_{2i}:=n^{-3}$. We claim that with high probability, there is no set of $f(n)$ vertices in $\RG(n,W)$ forming a clique. Indeed, consider the representation of the vertices of $\RG(n,W)$ in the interval $[0,1]$. By Fact~\ref{fact:minimumlarge} we can assume that the mutual distances between these points are more than $a_{2i}$. Consider an arbitrary set $S\subset (0,1)$ of these points of size $f(n)$. By the pigeonhole principle there are two points $x,y\in S$ with $|x-y|\le 1/(f(n)-1)<a_{2i-1}$. On the other hand, $|x-y|>a_{2i}$. We conclude that $W(x,y)=0$, and thus $S$ does not induce a clique. 

Next, let us show how to achieve~\eqref{eq:Psilarge}. Suppose that numbers $a_1,\ldots,a_{2i}$ were already set. Fix a large number $n$. In particular, suppose that $n^{-3}<a_{2i}$ and $f(n)>\frac 2{a_{2i}}$, and let $a_{2i+1}:=n^{-3}$. Now, consider the process of generating vertices in $\RG(n,W)$. By the Law of Large Numbers, out of $n$ vertices, with high probability, at least $\frac 12a_{2i}n$ vertices fall in the interval $(\frac12-\frac{a_{2i}}2,\frac12+\frac{a_{2i}}2)$. By Fact~\ref{fact:minimumlarge}, with high probability, the differences of pairs of these points are bigger than $a_{2i+1}$. In particular, the said set of vertices forms a clique of order at least $\frac 12a_{2i}n> \frac{n}{f(n)}$, as needed for~\eqref{eq:Psilarge}.\qed

\begin{remark}
It may seem that by replacing the values 0 and 1 by some constants $0<p_1<p_2<1$ in the construction in the proof of Proposition~\ref{prop:wild} we get an oscillation between $c_1\log n$ and $c_2\log n$. Theorem~\ref{thm:main} tells us however that this is not the case: the clique number normalized by $\log n$ will converge in probability.\qed
\end{remark}

\subsection{Proof of Theorem~\ref{thm:concentration}}
The proof of Theorem~\ref{thm:concentration} was suggested to us by Lutz Warnke.\medskip

First, we handle the case when $\Exp[\omega(\RG(n,W))]$ is bounded.
\begin{lemma}\label{lem:boundedcharacterization}
Let $W$ be a graphon, and $L=\lim_{n\to\infty} \Exp[\omega(\RG(n,W))]$. Then
$L=\sup \{k\in\NN:t(K_k,W)>0\}$. In addition, if $L$ is finite then $\lim_{n\to\infty} \Pr[\omega(\RG(n,W))=L]=1$.
\end{lemma}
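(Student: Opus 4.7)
The plan is to connect the two sides via the homomorphism density and then exploit the independence of samples/edges in $\RG(n,W)$ across disjoint vertex sets. Write $L^*=\sup\{k\in\NN:t(K_k,W)>0\}$, and observe the basic identity $\Pr[\RG(k,W)=K_k]=t(K_k,W)$, since both sides equal $\int_{\Omega^k}\prod_{i<j}W(x_i,x_j)\,d\nu^k$. Because $\omega(\RG(n,W))$ is stochastically monotone in $n$, the limit $L$ defining the lemma exists (possibly $+\infty$).

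First I would prove the upper bound $L\le L^*$. For any $k>L^*$ one has $t(K_k,W)=0$, so the expected number of $k$-cliques in $\RG(n,W)$ equals $\binom{n}{k}\,t(K_k,W)=0$. Hence there are no $k$-cliques almost surely, giving $\omega(\RG(n,W))\le L^*$ a.s.\ for every $n$, and therefore $L\le L^*$.

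Next I would prove the lower bound $L\ge L^*$ by a straightforward partition argument. Fix any $k\le L^*$ and set $q=t(K_k,W)>0$. Partition $[n]$ into $m=\lfloor n/k\rfloor$ pairwise disjoint blocks $S_1,\ldots,S_m$ of size $k$. Since the latent variables $x_j$ are independent across $j$ and the Bernoulli edge choices are independent across pairs, the events $E_i=\{S_i\text{ induces a clique in }\RG(n,W)\}$ are mutually independent, each with probability $q$. Thus
\[
\Pr[\omega(\RG(n,W))<k]\le \Pr\Bigl[\bigcap_{i=1}^m E_i^c\Bigr]=(1-q)^m\xrightarrow[n\to\infty]{}0.
\]
It follows that $\omega(\RG(n,W))\ge k$ asymptotically almost surely, hence $\liminf_{n}\Exp[\omega(\RG(n,W))]\ge k$. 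Taking $k\nearrow L^*$ (or $k\to\infty$ if $L^*=\infty$) yields $L\ge L^*$.

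Finally, if $L=L^*<\infty$, the upper bound gives $\omega(\RG(n,W))\le L^*$ a.s., while the partition argument with $k=L^*$ gives $\Pr[\omega(\RG(n,W))\ge L^*]\to 1$. Combining these two statements yields $\Pr[\omega(\RG(n,W))=L^*]\to 1$, which is the additional claim. The only thing to be slightly careful about is the justification that the events $E_i$ are truly independent (rather than merely pairwise uncorrelated), but this is immediate from the product-space construction of $\RG(n,W)$ on disjoint vertex sets; no concentration inequality or second-moment calculation is required, which keeps the argument entirely clean.
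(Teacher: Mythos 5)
Your proof is correct and follows essentially the same route as the paper: the key step in both is that disjoint vertex blocks yield independent trials, so a single block forming a $k$-clique with probability $t(K_k,W)>0$ forces $\Pr[\omega\ge k]\to 1$ via a $(1-q)^m$ bound. You are slightly more explicit than the paper in spelling out the first-moment upper bound $\binom{n}{k}t(K_k,W)=0$ for $k>L^*$ and the identity $\Pr[\RG(k,W)=K_k]=t(K_k,W)$, but the argument is the same.
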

\begin{proof}
The statement follows from the following claim. Suppose that $W$ is a
graphon. Then for each $\ell\in\NN$ we have
\begin{equation*}
\lim_{n\to\infty}\Pr[\omega(\RG(n,W))\ge \ell]\in \{0,1\}\;.
\end{equation*}
Indeed, suppose that for some $\ell$ and $n$ we have that $\Pr[\omega(\RG(n,W))\ge \ell]=\delta>0$. Then, for each $k$, we have that $\Pr[\omega(\RG(kn,W))\ge \ell]\ge 1-(1-\delta)^k$. Consequently, $\lim_{n\to\infty}\Pr[\omega(\RG(n,W))\ge \ell]=1$.\Referee{18}
\end{proof}

Suppose that $W$ is represented on a probability space $\Omega$.
By Lemma~\ref{lem:boundedcharacterization}, we can assume that $\Exp[\omega(\RG(n,W))]\rightarrow \infty$.

To prove the concentration, we shall use Talagrand's inequality. For this, we need to represent $\RG(n,W)$ on a suitable product space $J$. It turns out that the right product space corresponds to ``vertex-exposure'' technique known in the theory of random graphs. Let 
$J:=\prod_{i=1}^nJ_i$, where $J_i=\Omega\times [0,1]^{i-1}$. 
This indeed is a ``vertex-exposure model'' of $\RG(n,W)$. To see this, consider an arbitrary element ${\bf{x}}\in J$. We can write
$${\bf{x}}=(x_1,(\;); x_2,(\begin{matrix}
p_{1,2}
\end{matrix});
x_3,\left(\begin{matrix}
p_{1,3}\\
p_{2,3}
\end{matrix}\right);
\ldots;x_n,\left(\begin{matrix}
p_{1,n}\\
p_{2,n}\\
\cdots\\
p_{n-1,n}
\end{matrix}\right))\;,$$
where $x_i\in \Omega$, and $p_{i,j}\in[0,1]$. In the instance of $\RG(n,W)$ corresponding to ${\bf{x}}$, vertices $i$ and $j$ are connected if and only if $W(x_i,x_j)\ge p_{i,j}$. It is straightforward to check that this gives the right distribution on $\RG(n,W)$.

Consider the clique number, this time on the domain $J$. That is, we have a function $\Psi:J\to \RR$, where $\Psi({\bf{x}})$ is the clique number of the graph corresponding to ${\bf{x}}$. Then $\Psi$ is a (discrete) $1$-Lipschitz function. That is if ${\bf{x}},{\bf{y}}\in J$ are such that they differ in one coordinate, then $|\Psi({\bf{x}})-\Psi({\bf{y}})|\le 1$.\footnote{One could consider a stronger notion of $1$-Lipschitzness, namely, to require that changing ${\bf{x}}$ on an $E$-coordinate by $\epsilon$ would change the value of our function by at most $\epsilon$. This clearly is not true for $\Psi$. However, the weaker version is sufficient for our purposes.}
Further, $\Psi$ satisfies the so-called \emph{small certificates condition}. This means that whenever $\Psi({\bf{x}})\ge \ell$, there exists a set $C$ of at most $\ell$ many coordinates such that $\Psi({\bf{y}})\ge \ell$ for each ${\bf{y}}\in J$ which agrees with ${\bf{x}}$ on each coordinate from $C$. (In other words, the values of ${\bf{x}}$ on coordinates from $C$ alone certify that $\Psi({\bf{x}})\ge \ell$.) Indeed, it is enough just to reveal the values at the indices of one maximum clique. Talagrand's inequality (see~\cite[Remark 2 following Talagrand's Inequality~II, p.~81]{MolloyReed})\footnote{Actually, as was communicated to us by Lutz Warnke and Mike Molloy, there is a typo in~\cite{MolloyReed}. The effect of this typo, however, is only the value of the constant $\beta$ below. Since we do not make $\beta$ explicit, this typo is irrelevant.} then states that there exists an absolute constant $\beta>0$ such that for $t_n=\left(\Exp[\omega(\RG(n,W))]\right)^{\frac 34}$, we have (for every large enough $n$) that
$$\Pr\big[|\omega(\RG(n,W))-\Exp[\omega(\RG(n,W))]|>t_n\big]\le 2\exp\left(-\frac{\beta t_n^2}{\Exp[\omega(\RG(n,W))]}\right)=2\exp\left(-\beta \sqrt{\Exp[\omega(\RG(n,W))]}\right)\;.$$
The conclusion immediately follows by letting $n$ go to infinity.\qed

\subsection{Graphons with a bounded clique number}\label{ssec:boundedclique}
Lemma~\ref{lem:boundedcharacterization} provides some information about graphons with a bounded clique number. In this section, we prove Theorem~\ref{thm:boundedcliquenumber} which gives a much more explicit description.

\begin{proof}[Proof of Theorem~\ref{thm:boundedcliquenumber}]
Let $W\colon\Omega^2\rightarrow [0,1]$ be a graphon with a bounded clique number.
By Lemma~\ref{lem:boundedcharacterization} we know that $L=\sup \{k\in\NN:t(K_k,W)>0\}$ is a finite (natural) number.
First of all, we will show that there is a set $B\subseteq\Omega$ of positive measure such that $W\restriction_{B\times B}=0$ almost everywhere.
We may assume that $L\geq 2$ (the case $L=1$ is trivial).
For every ($L-1$)-tuple $\boldsymbol{x}=(x_1,\ldots,x_{L-1})\in \Omega^{L-1}$, let us denote $Q_{\boldsymbol{x}}=\{y\in \Omega \colon \prod_{i=1}^{L-1}W(x_i,y)>0\}$.
It follows from the equality $t(K_{L+1},W)=0$ that for every (up to a set of measure zero) $\boldsymbol{x}\in \Omega^{L-1}$ such that $\prod_{i<j}W(x_i,x_j)>0$, we have $W\restriction_{Q_{\boldsymbol{x}}\times Q_{\boldsymbol{x}}}=0$ almost everywhere.
But since $t(K_L,W)>0$, the set of all $\boldsymbol{x}\in \Omega^{L-1}$ such that $\prod_{i<j}W(x_i,x_j)>0$ and $\nu(Q_{\boldsymbol{x}})>0$, has positive measure.
So it suffices to set $B=Q_{\boldsymbol{x}}$ for a suitable $\boldsymbol{x}\in\Omega^{L-1}$.

Next, observe that for every $A\subset \Omega$ of positive measure, there is $B\subset A$ of positive measure such that $W\restriction_{B\times B}=0$ almost everywhere.
This follows by the previous considerations applied on the subgraphon $W^*=W\restriction_{A\times A}$ (for which we still have that $\sup \{k\in\NN:t(K_k,W^*)>0\} < + \infty$). 

Finally, let $W'\colon(0,1)^2\rightarrow [0,1]$ be a representation of the graphon $W$ on $(0,1)$. Then the statement follows by an application of Lemma~\ref{lemma:exhaustion}.
\end{proof}

\section{Biclique number}\label{sec:bislique}
In this section, we prove Theorem~\ref{thm:biclique}. First, we introduce some additional notation. When we refer to a bipartite graph $H=(V,W;E)$ as a \emph{bigraph}, we consider a distinguished order of the colour classes $V=\{v_1,\ldots,v_p\}$ and $W=\{w_1,\ldots,w_q\}$. In such a case we define the \emph{bipartite density} of $H$ in a bigraphon $U:\Omega_1\times \Omega_2\rightarrow [0,1]$ by
\begin{equation}
\label{eq:tB}
\tB(H,U)=\int_{(x_1,\ldots,x_p)\in \Omega_1^p}\int_{(y_1,\ldots,y_q)\in\Omega_2^q}
\prod_{ij:v_iw_j\in E}U(x_i,y_j)\;\differential(\nu_2^q)\differential(\nu_1^p)\;.
\end{equation}
Note that for a bigraph $H=(V,W;E)$ and its conjugate $H'=(W,V;E)$ the quantities $\tB(H,U)$ and $\tB(H',U)$ are not necessarily equal.

\medskip

As we will see, the upper bound in Theorem~\ref{thm:biclique} is trivial. For the lower bound, we need to make a small detour to Sidorenko's conjecture.
\subsection{Sidorenko's conjecture}
A famous conjecture of Simonovits and Sidorenko, ``Sidorenko's conjecture'',~\cite{Simonovits:Sidorenko,Sidorenko:Conjecture} asserts that among all graphs of a given (large) order~$n$ and fixed edge density~$d$, the density of a fixed bipartite graph is minimized for a typical sample from $\RG(n,d)$. The conjecture can be particularly neatly phrased in the language of graphons --- as observed already by Sidorenko a decade before the notion of graphons itself --- in which case it asserts that
\begin{equation}\label{eq:Sidorenko}
\tB(H,U)
\ge \left(\int_{x\in\Omega_1}\int_{y\in\Omega_2} U(x,y)\;\differential(\nu_2)\differential(\nu_1)\right)^{e(H)}\;,
\end{equation}
for each bigraphon $U:\Omega_1\times \Omega_2\rightarrow [0,1]$ and each bigraph $H$. Despite recent results~\cite{Hatami:Sidorenko,CoFoSu:Sidorenko,Lovasz:Sidorenko,LiSze:Sidorenko}, the conjecture is wide open.
We shall need the solution of Sidorenko's Conjecture for $H=K_{n,m}$ which was observed already by Sidorenko. We give a short self-contained and unoriginal proof here.
\begin{proposition}\label{prop:bipSidorenkoComplete}
Suppose that $U:\Omega_1\times \Omega_2\rightarrow [0,1]$ is an arbitrary bigraphon and $n,m\in\NN$ are arbitrary. Then for the complete bigraph $K_{n,m}$ we have
$\tB(K_{n,m},U)\ge \left(\int_{x\in\Omega_1}\int_{y\in\Omega_2} U(x,y)\;\differential(\nu_2)\differential(\nu_1)\right)^{nm}$.

\end{proposition}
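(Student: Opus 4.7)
The proof is a standard two-step application of Jensen's inequality, using the fact that $t\mapsto t^k$ is convex on $\mathbb{R}_+$ for every integer $k\ge 1$. I will sketch it as follows.

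First, I would rewrite the definition \eqref{eq:tB} applied to $K_{n,m}$ by grouping the $m$ vertices on the right side. Since the indicator of $K_{n,m}$ factors as $\prod_{i,j} U(x_i,y_j) = \prod_{j=1}^m \bigl(\prod_{i=1}^n U(x_i,y_j)\bigr)$ and the $y_j$'s are i.i.d., we get
\begin{equation*}
\tB(K_{n,m},U)=\int_{\Omega_1^n}\left(\int_{\Omega_2}\prod_{i=1}^n U(x_i,y)\,\differential \nu_2(y)\right)^{\!m}\differential \nu_1^n(\mathbf{x})\,.
\end{equation*}

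Second, I would apply Jensen's inequality to the convex function $t\mapsto t^m$ on $\mathbb{R}_+$, with respect to the probability measure $\nu_1^n$ on $\Omega_1^n$, pushing the $m$-th power outside:
\begin{equation*}
\tB(K_{n,m},U)\ge\left(\int_{\Omega_1^n}\int_{\Omega_2}\prod_{i=1}^n U(x_i,y)\,\differential \nu_2(y)\,\differential \nu_1^n(\mathbf{x})\right)^{\!m}\,.
\end{equation*}
Now I would swap the order of integration using Fubini and use independence of the $x_i$'s to obtain
\begin{equation*}
\int_{\Omega_1^n}\prod_{i=1}^n U(x_i,y)\,\differential \nu_1^n(\mathbf{x})=\left(\int_{\Omega_1} U(x,y)\,\differential \nu_1(x)\right)^{\!n}=:d(y)^n\,.
\end{equation*}
This leaves $\tB(K_{n,m},U)\ge \bigl(\int_{\Omega_2} d(y)^n\,\differential \nu_2\bigr)^m$.

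Third, I would apply Jensen's inequality once more, this time to the convex function $t\mapsto t^n$ on $\mathbb{R}_+$ with respect to $\nu_2$, which yields
\begin{equation*}
\int_{\Omega_2} d(y)^n\,\differential \nu_2\ge \left(\int_{\Omega_2} d(y)\,\differential \nu_2\right)^{\!n}=\left(\int_{\Omega_1\times \Omega_2} U(x,y)\,\differential (\nu_1\otimes \nu_2)\right)^{\!n}\,.
\end{equation*}
Combining the two Jensen steps gives the desired inequality. There is no real obstacle here; the only thing to watch is that both probability measures $\nu_1^n$ and $\nu_2$ are genuine probability measures, so Jensen applies directly without any normalization. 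Since $U$ is bounded and nonnegative, all integrals are finite and Fubini is justified without further comment.
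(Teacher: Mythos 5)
Your proof is correct and follows essentially the same route as the paper: integrate out the $y_j$'s first, push the $m$-th power outside by convexity, factor the inner integral over the $x_i$'s, and finish with a second convexity step. The only cosmetic difference is that you invoke Jensen's inequality where the paper cites H\"older's; for the inequality $\int f^k\,\differential\mu\ge\bigl(\int f\,\differential\mu\bigr)^k$ over a probability measure these are interchangeable.
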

\begin{proof}
We have
\begin{align*}
\int_{x_1,\ldots,x_n}\int_{y_1,\ldots,y_m}\;\prod_{i\in[n],j\in[m]}U(x_i,y_j)
&=\int_{x_1,\ldots,x_n}\Big(\int_{y}\prod_{i\in[n]}U(x_i,y)\Big)^m
\geq\Big(\int_{x_1,\ldots,x_n}\int_{y}\prod_{i\in[n]}U(x_i,y)\Big)^m\\
&=\Big(\int_{y}\int_{x_1,\ldots,x_n}\prod_{i\in[n]}U(x_i,y)\Big)^m
=\Big(\int_{y}\Big(\int_{x}U(x,y)\Big)^n\Big)^m\\
&\geq\Big(\int_{x}\int_{y}U(x,y)\Big)^{n m}\;,
\end{align*}
where both inequalities follow by applications of H\"older's inequality.
\end{proof}

\subsection{Bicliques in almost constant bigraphons}
As a preliminary step for our proof of Theorem~\ref{thm:biclique}, we study bicliques in random bipartite graphs sampled from almost constant bigraphons. This condition is formalized by the following definition.
\begin{definition}
A bigraphon $U:\Omega_1\times \Omega_2\rightarrow[0,1]$ is \emph{$(d,\epsilon)$-constant} if $\int_{(x,y)\in\Omega_1\times \Omega_2} U(x,y)\differential(\nu_1\times\nu_2)\ge d$ and $\esssup U\le d+\epsilon$.
\end{definition}
\begin{proposition}\label{prop:bicliquealmostconstant}
Let $0<d_1<d_2<1$ be given.
Then for every $\alpha\in(0,1)$ there exists $\epsilon\in(0,1)$ such that the following holds: Whenever we have $d\in(d_1,d_2)$ and a $(d,\epsilon)$-constant bigraphon $U:\Omega_1\times \Omega_2\rightarrow[0,1]$ then for $G\sim \BRG(k,U)$ we have a.a.s. that
$$\omega_2(G)\ge (1-\alpha)\cdot\frac{2}{\log 1/d}\cdot\log k\;.$$
\end{proposition}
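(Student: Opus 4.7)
The plan is to reduce to the constant-probability bipartite Erd\H{o}s--R\'enyi model. After representing $U$ on $(0,1)^2$ (the bipartite analogue of Fact~\ref{fact:representinterval}), I would note that $\{U\ge d\}\subset(0,1)^2$ has positive Lebesgue measure because $\int U\ge d$, and fix a Lebesgue point $(x_0,y_0)$ of $U$ in that set via Theorem~\ref{theorem:LP}; write $p:=U(x_0,y_0)\in[d,d+\epsilon]$. I would then choose $\eta>0$ small enough, uniformly for $d\in(d_1,d_2)$, to satisfy $\tfrac{1}{\log(1/(d-\eta))}\ge(1-\alpha)\tfrac{1}{\log(1/d)}$ (possible since the left-hand side tends to $\tfrac{1}{\log(1/d)}$ as $\eta\to 0$ uniformly on $[d_1,d_2]$), pick a further small auxiliary constant $\eta'>0$, and by the Lebesgue point property select $r>0$ so small that the set $\{(x,y)\in M_r:|U(x,y)-p|>\eta\}$ has measure at most $\eta'\lambda(M_r)$, where $M_r=(x_0-r,x_0+r)\times(y_0-r,y_0+r)$.

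Next I would sample $G\sim\BRG(k,U)$ and use Chernoff to say that the index sets $I_*:=\{i:X_i\in(x_0-r,x_0+r)\}$ and $J_*:=\{j:Y_j\in(y_0-r,y_0+r)\}$ each have size at least $rk$ a.a.s. Calling a pair $(i,j)\in I_*\times J_*$ \emph{bad} if $U(X_i,Y_j)<p-\eta$, the expected number of bad pairs is at most $|I_*||J_*|\eta'$. A two-step vertex-deletion cleaning (first delete $i\in I_*$ with too many bad partners in $J_*$, then delete $j\in J_*$ with too many bad partners in what remains of $I_*$) should yield $A\subset I_*$ and $B\subset J_*$ with $|A|,|B|\ge(1-O(\sqrt{\eta'}))rk$ and no bad pair in $A\times B$. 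Every potential edge inside $A\times B$ is then present independently with probability at least $p-\eta$.

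By stochastic dominance, the induced bipartite random graph on $A\cup B$ contains a copy of $\BRG(\min(|A|,|B|),\,p-\eta)$. Applying the standard bipartite second-moment lower bound on $\omega_2(\BRG(n,q))$ at a constant $q\in(0,1)$ --- the bipartite analogue of the Matula / Grimmett--McDiarmid clique theorem cited in~\eqref{eq:cliqueGnp}, proved by essentially the same argument --- yields
$$\omega_2(G)\ \ge\ (1-o(1))\,\frac{2\log k}{\log(1/(p-\eta))}\ \ge\ (1-o(1))\,\frac{2\log k}{\log(1/(d-\eta))}\ \ge\ (1-\alpha)\,\frac{2\log k}{\log(1/d)}$$
by our choice of $\eta$. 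The essential-supremum hypothesis $\esssup U\le d+\epsilon$ is used only to keep $p-\eta\le d+\epsilon$ bounded away from $1$; one picks $\epsilon$ small enough that $d+\epsilon<1$ throughout $(d_1,d_2)$. The main obstacle is executing the two-step cleaning so that $A\times B$ is \emph{entirely} free of bad pairs while retaining $\Theta(k)$ vertices on each side --- a routine but careful Markov-plus-concentration calculation.
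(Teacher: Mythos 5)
Your reduction to the homogeneous bipartite model breaks at the cleaning step, and the failure is structural rather than technical. The claim that deleting vertices with ``too many bad partners'' leaves $A\subset I_*$, $B\subset J_*$ with $|A|,|B|\ge(1-O(\sqrt{\eta'}))rk$ and \emph{no} bad pair in $A\times B$ is false. Take the bad set inside $M_r$ to be block-diagonal: partition $(x_0-r,x_0+r)$ into $M=1/\eta'$ equal intervals $S_1,\dots,S_M$, likewise $(y_0-r,y_0+r)$ into $T_1,\dots,T_M$, and put $D=\bigcup_m S_m\times T_m$, so that $\lambda(D)=\eta'\lambda(M_r)$. Every $i\in I_*$ then has only about an $\eta'$-fraction of bad partners (and symmetrically for $j\in J_*$), so neither deletion round removes anything; yet a bad-pair-free $A\times B$ forces $P\cap Q=\emptyset$ for $P=\{m:A\cap S_m\ne\emptyset\}$, $Q=\{m:B\cap T_m\ne\emptyset\}$, whence $|A|/|I_*|+|B|/|J_*|\lesssim(|P|+|Q|)/M\le 1$ and you cannot keep a $(1-o(1))$-fraction of both sides. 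Worse, the whole strategy of extracting a clean product set of linear size cannot work in general: take $C\subset(-1,1)$ open, dense, of measure $\eta'$, set $N=\{(x,y):x-y\in C\}$ and $U=(d+\epsilon)\mathbf{1}_{N^c}+q\,\mathbf{1}_{N}$ with $0<q<d$. This $U$ is $(d,\epsilon)$-constant for small $\eta'$, but for a fixed index set $B$ the set of $x$ with $(x,Y_j)\notin N$ for all $j\in B$ is $\bigcap_{j\in B}(C^c+Y_j)$, whose measure decays geometrically in $|B|$ for generic translates; so the largest clean $A\times B$ is expected to have only $O_{\eta'}(\log k)$ points on one side, and your final bound would degrade to $\Theta(\log\log k)$.

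This is precisely why the paper does not route through $\BRG(m,p-\eta)$. It runs the second-moment method directly on the number $X_k$ of copies of $K_{\ell,\ell}$ in $\BRG(k,U)$: the first moment needs no pointwise lower bound on $U$ because Sidorenko's inequality for complete bipartite graphs (Proposition~\ref{prop:bipSidorenkoComplete}) gives $\tB(K_{\ell,\ell},U)\ge\left(\int U\right)^{\ell^2}\ge d^{\ell^2}$, while the hypothesis $\esssup U\le d+\epsilon$ is what controls the overlap terms $\Exp[Y_{p,q}]$ in the second moment (Claims~\ref{cl:pqlarge} and~\ref{cl:pqsmall}, via~\eqref{eq:HarrisA} and H\"older). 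The point is that a biclique only needs its $\ell^2=O(\log^2k)$ specific pairs to carry edges, and the averaged Sidorenko bound captures exactly that; demanding a clean $\Theta(k)\times\Theta(k)$ box asks for far more than the statement needs and is not available. Any repair of your argument would still have to establish the second-moment estimate for a genuinely inhomogeneous $(d,\epsilon)$-constant $U$, which is the actual content of the proposition.
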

\begin{proof}
Let $\alpha\in(0,1)$ be arbitrary. Suppose that $\epsilon>0$ is sufficiently small (we will make this precise later), $d\in(d_1,d_2)$ and $U:\Omega_1\times \Omega_2\rightarrow[0,1]$ is $(d,\epsilon)$-constant. Suppose further that $k$ is large. 

Let $X_k$ be the number of bicliques in $\BRG(k,U)$ whose two colour classes have size $\ell=(1-\alpha)\cdot\frac{2}{\log 1/d}\cdot\log k$. Multiplicities caused by automorphisms of $K_{\ell,\ell}$ are not counted. By Proposition~\ref{prop:bipSidorenkoComplete} we have
\begin{equation}\label{eq:firstm}
\Exp[X_k]={k\choose \ell}^2\cdot \tB(K_{\ell,\ell},U)
\ge {k\choose \ell}^2 d^{\ell^2}\ge \left(\frac{k}{\ell}\right)^{2\ell} d^{\ell^2}=\left(\frac{k^{2\alpha}}{\ell^2}\right)^\ell
\;.
\end{equation}
Next, we are going to show by a second moment argument that $X_k\approx \Exp[X_k]$ a.a.s. For $p,q=0,1,\ldots,\ell$, we define the bigraph $K_{[\ell,p,q]}$ as a result of gluing two copies of $K_{\ell,\ell}$ along $p$ vertices in the first colour class and $q$ vertices in the second colour class. Alternatively, $K_{[\ell,p,q]}$ can be obtained by erasing edges of two disjoint copies of the bigraph $K_{\ell-p,\ell-q}$ from $K_{2\ell-p,2\ell-q}$. We have 
\begin{equation}\label{eq:edgesKpq}
e(K_{[\ell,p,q]})=2\ell^2-pq\;.
\end{equation}
We have 
\begin{equation}\label{eq:secondmomentidentity}
\Exp[X_k^2]=\sum_{p=0}^\ell\sum_{q=0}^\ell \Exp[Y_{p,q}]\;,
\end{equation}
where $Y_{p,q}$ counts the number of bigraphs $K_{[\ell,p,q]}$ which preserve the order of the colour classes. We expand the second moment as
\begin{equation}\label{eq:defQ}
\Exp[Y_{p,q}]={k \choose \ell-p \:|\: \ell-p \:|\: p}\cdot{k \choose \ell-q \:|\: \ell-q \:|\: q}\cdot\tB(K_{[\ell,p,q]},U)\;.
\end{equation}
\begin{claim}\label{cl:pqlarge}
For every $c>0$ there exists $\epsilon_1>0$ such that the following holds: Whenever $d\in(d_1,d_2)$ and $U:\Omega_1\times \Omega_2\rightarrow[0,1]$ is $(d,\epsilon_1)$-constant then for each $p,q\in[\ell]_0$, $p+q\ge c\log k$, we have
$$\Exp[X_k]^2\ge \log^3 k\cdot \Exp[Y_{p,q}]\;.$$
\end{claim}
\begin{proof}[Proof of Claim~\ref{cl:pqlarge}]
Let $c>0$ be arbitrary.
Suppose that $\epsilon_1>0$ is sufficiently small, $d\in(d_1,d_2)$ and $U:\Omega_1\times \Omega_2\rightarrow[0,1]$ is $(d,\epsilon_1)$-constant. Let $p,q\in[\ell]_0$ be such that $p+q\ge c\log k$ (and of course $p+q\le 2\ell$). Upper-bounding the terms in~\eqref{eq:defQ} (while bearing in mind that $\ell=\Theta(\log k)$), we get
\begin{equation}
\label{eq:GMAM}
\begin{aligned}
\Exp[Y_{p,q}]&\le
 k^{4\ell-p-q}(d+\epsilon_1)^{e(K_{[\ell,p,q]})}
\eqByRef{eq:edgesKpq}
 k^{4\ell-p-q}(d+\epsilon_1)^{2\ell^2-pq}\\
\JUSTIFY{AM-GM Ineq., $d+\epsilon_1<1$} &\le
 \left(k^2(d+\epsilon_1)^{\ell}\right)^{2\ell}
 \left(k(d+\epsilon_1)^{\frac{p+q}4}\right)^{-(p+q)}\\
\JUSTIFY{$p+q\le 2\ell$ and $d+\epsilon_1<1$} &\le  \left(k^2(d+\epsilon_1)^{\ell}\right)^{2\ell}
 \left(k(d+\epsilon_1)^{\frac{\ell}2}\right)^{-(p+q)}\\
 \JUSTIFY{$k(d+\epsilon_1)^{\frac{\ell}2}\ge k^{\alpha}$} &\le
 \exp\left(4\ell\log k\left(1+(1-\alpha)\tfrac{\log(d+\epsilon_1)}{\log 1/d}\right)\right)k^{-c\alpha\log k}\\
 &=
 k^{4\alpha \ell}\exp\left(4\ell\log k\left(1-\alpha\right)\left(1-\tfrac{\log (d+\epsilon_1)}{\log d}\right)-c\alpha\log^2 k\right)\\
\JUSTIFY{$\epsilon_1$ sufficiently small}&\le 
k^{4\alpha \ell}\exp\left(-\tfrac 12c\alpha\log^2 k\right)
\end{aligned}
\end{equation}
It is now enough to compare this with~\eqref{eq:firstm}.
\end{proof}

\begin{claim}\label{cl:pqsmall}
There exist numbers $C,\epsilon_2>0$
such that the following holds: Whenever $d\in(d_1,d_2)$ and $U:\Omega_1\times \Omega_2\rightarrow[0,1]$ is $(d,\epsilon_2)$-constant then for each $p,q\in[\ell]_0$, $1\le p+q<C\log k$, we have
$$\Exp[Y_{0,0}]\ge k^{\frac12}\Exp[Y_{p,q}]\;.$$
\end{claim}
\begin{proof}[Proof of Claim~\ref{cl:pqsmall}]
Suppose that $\epsilon_2>0$ is sufficiently small, $d\in(d_1,d_2)$ and $U:\Omega_1\times \Omega_2\rightarrow[0,1]$ is $(d,\epsilon_2)$-constant.
Let us compare the combinatorial coefficients corresponding to $\Exp[Y_{0,0}]$ and $\Exp[Y_{p,q}]$ in~\eqref{eq:defQ}. We have 
\begin{equation}\label{eq:bclet}
\frac{{k\choose \ell\:|\:\ell}^2}{{k\choose \ell-p\:|\:\ell-p\:|\: p}\cdot{k\choose \ell-q\:|\:\ell-q\:|\: q}}=k^{(1+o(1))(p+q)}\;,
\end{equation}
where $o(1)\rightarrow 0$ as $k\rightarrow \infty$ uniformly for any choice of $p$ and $q$.

\medskip

It remains to compare the terms $\tB(K_{[\ell,0,0]},U)$ and $\tB(K_{[\ell,p,q]},U)$.

First, we claim that for each $i,j,h\in \NN$ we have 
\begin{equation}\label{eq:HarrisA}
\tB(K_{i,h},U)\tB(K_{j,h},U)\le \tB(K_{i+j,h},U)\;.
\end{equation}
Indeed, by H\"older's inequality, we have
\begin{align*}
\tB(K_{i,h},U)&=\int_T\left(\int_x \deg(x,T)\right)^i\leq \left(\int_T\left(\int_x \deg(x,T)\right)^{i+j}\right)^{\frac{i}{i+j}}\;,\\
\tB(K_{j,h},U)&=\int_T\left(\int_x \deg(x,T)\right)^j\leq \left(\int_T\left(\int_x \deg(x,T)\right)^{i+j}\right)^{\frac{j}{i+j}}\;,\\
\end{align*}
where the integrations are over $T=(t_1,\ldots,t_h)\in (\Omega_2)^h$, and $x\in \Omega_1$, and $\deg(x,T)=\prod_{r=1}^h U(x,t_r)$. Thus
\begin{align*}
\tB(K_{i,h},U)\tB(K_{j,h},U)\le \int_T\left(\int_x \deg(x,T)\right)^{i+j}=\tB(K_{i+j,h},U)\;,
\end{align*}
as we wanted.

A double application of~\eqref{eq:HarrisA} followed by an application of Proposition~\ref{prop:bipSidorenkoComplete} gives
\begin{align}
\begin{split}\label{eq:max1}
\tB(K_{\ell,\ell},U)&\ge \tB(K_{\ell-p,\ell},U) \tB(K_{p,\ell},U)
\ge \tB(K_{\ell-p,\ell-q},U) \tB(K_{\ell-p,q},U) \tB(K_{p,\ell},U)\\
&\ge \tB(K_{\ell-p,\ell-q},U) d^{q\ell-pq} d^{p\ell}\;.
\end{split}
\end{align}
In the defining formula~\eqref{eq:tB} for $\tB(K_{[\ell,p,q]},U)$ we show that $d+\epsilon_2$ are upper bounds on some factors in $\prod U(x_i,y_i)$, as in Figure~\ref{fig:deletededges}. Observe that after removal of the $p\ell+q\ell-2pq$ edges indicated in Figure~\ref{fig:deletededges}, the graph $K_{[\ell,p,q]}$ decomposes into a disjoint union of $K_{\ell,\ell}$ and $K_{\ell-p,\ell-q}$.
\begin{figure}[t]
    \centering
    \includegraphics{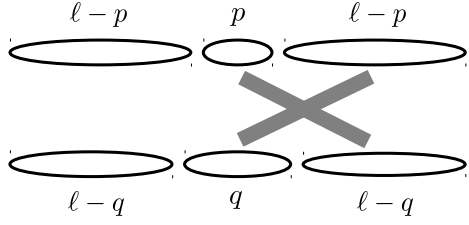}
     \caption{The $p\ell+q\ell-2pq$ edges of $K_{[\ell,p,q]}$ corresponding to factors bounded from above by $d+\epsilon_2$ in~\eqref{eq:tB}.}
\label{fig:deletededges}
\end{figure}
Note that for a disjoint union $H_1\oplus H_2$ of two bigraphs $H_1$ and $H_2$ we have $\tB(H_1\oplus H_2,U)=\tB(H_1,U)\tB(H_2,U)$.
Thus,
\begin{equation}\label{eq:max2}
\tB(K_{[\ell,p,q]},U)\le (d+\epsilon_2)^{p\ell+q\ell-2pq}\tB(K_{\ell,\ell},U) \tB(K_{\ell-p,\ell-q},U).
\end{equation}
Therefore, 
\begin{align}
\begin{split}\label{eq:poN}
\tB(K_{[\ell,0,0]},U)&=\tB(K_{\ell,\ell},U)\tB(K_{\ell,\ell},U)\geByRef{eq:max1} \tB(K_{\ell,\ell},U)\tB(K_{\ell-p,\ell-q},U)d^{p\ell+q\ell-pq}\\
&\geByRef{eq:max2}
\tB(K_{[\ell,p,q]},U)\left(\frac{d}{d+\epsilon_2}\right)^{p\ell+q\ell}
\left(\frac{(d+\epsilon_2)^2}d\right)^{pq}
\\
\JUSTIFY{AM-GM Ineq., $\epsilon_2$ such that $\frac{(d+\epsilon_2)^2}{d}\le 1$}&\ge 
\tB(K_{[\ell,p,q]},U)
\left(1-\frac{\epsilon_2}{d+\epsilon_2}\right)^{p\ell+q\ell}
\left(\frac{(d+\epsilon_2)^2}d\right)^{(p+q)\cdot\frac{p+q}4}\\
&\ge
\tB(K_{[\ell,p,q]},U)
\left(1-\frac{\epsilon_2}{d+\epsilon_2}\right)^{p\ell+q\ell}
d^{(p+q)\cdot \frac{C\log k}4}\;.
\end{split}
\end{align}
Substituting~\eqref{eq:bclet} and~\eqref{eq:poN} into~
\eqref{eq:defQ} we get
\begin{align*}
\frac{\Exp[Y_{0,0}]}{\Exp[Y_{p,q}]}&\ge \left(k^{1+o(1)}\left(1-\frac{\epsilon_2}{d+\epsilon_2}\right)^{\ell}d^\frac{C \log k}4\right)^{p+q}\\
\JUSTIFY{for $\zeta\in(0,\tfrac12)$ we have $1-\zeta\ge\exp(-2\zeta)$}&
\ge
\left(k^{1+o(1)}\exp\left(-(1-\alpha)\frac{2\epsilon_2}{d+\epsilon_2}
\cdot \frac{2}{\log 1/d}\cdot\log k\right)d^\frac{C \log k}4\right)^{p+q}\\
&=\left(k^{1+o(1)}\cdot k^{(1-\alpha)\frac{2\epsilon_2}{d+\epsilon_2}
\cdot \frac{2}{\log d}}\cdot k^\frac{C \log d}4\right)^{p+q}\\
&\ge k^{\frac12(p+q)}\ge k^{\frac12}\;,
\end{align*}
for $C=1/\log(1/d_1)$, and $\eps_2<\tfrac{1}{20} d_1\log(1/d_2)$.
\end{proof}
Let $C>0$ and $\eps_2>0$ be given by Claim ~\ref{cl:pqsmall}. Let $\eps_1>0$ be given by Claim~\ref{cl:pqlarge} for $c=C$. Now if $\eps<\min\left(\eps_1,\eps_2\right)$, we are ready provide upper bounds on the summands in~\eqref{eq:secondmomentidentity}. Note that we have $\Theta(\log^2  k)$ many of these summands.
We get
\begin{align*}
\Exp[X_k^2]&=\Exp[Y_{0,0}]+\sum_{\substack{p,q\in[l]_0\\ 1\le p+q<C\log k}}\Exp[Y_{p,q}]+\sum_{\substack{p,q\in[l]_0\\p+q\ge C\log k}}\Exp[Y_{p,q}]\\
&\le\Exp[Y_{0,0}]+\Theta(\log^2k)\Exp[Y_{0,0}]k^{-\tfrac 12}+\Theta(\log^2k)\Exp[X_k]^2\log^{-3}k\\
\JUSTIFY{$\Exp[Y_{0,0}]=(1+o(1))\Exp[X_k]^2$}&=(1+o(1))\Exp[X_k]^2.
\end{align*}
Therefore we have $\Exp[X_k^2]=(1+o(1))\Exp[X_k]^2$, and it follows by Chebyshev's inequality that $\Pr[X_k>0]=1-o(1)$.
\end{proof}
\subsection{Proof of Theorem~\ref{thm:biclique}}
The upper bound is easy, since it claims that $\omega_2(\BRG(n,U))$ is typically not bigger than the biclique number in the balanced bipartite Erd\H{o}s--R\'enyi random graph $\BRG(n,p)$ (which clearly stochastically dominates $\BRG(n,U)$). For completeness, we include the calculations. We write $Y_k(\BRG(n,U))$ for the number of complete balanced bipartite graphs on $k+k$ vertices inside $\BRG(n,U)$. For $k=(1+\epsilon)\cdot \frac{2}{\log 1/p}\cdot\log n$, we have,
\begin{align}\label{eq:bicliqueexp}
\Exp[Y_k]\le{n\choose k}\cdot {n\choose k}\cdot p^{k^2}
\le n^{2k}p^{k^2}=(n^2p^k)^k\;.
\end{align}
The statement now follows from Markov's Inequality, provided that we can show that $n^2p^k\rightarrow 0$. Indeed,
$$n^2p^k=n^2 p^\frac{2\log n}{\log 1/p}p^{\frac{2\epsilon\log n}{\log 1/p}}=p^{\frac{2\epsilon\log n}{\log 1/p}}\rightarrow 0\;.$$

\bigskip
We now turn to the lower bound. Let $\alpha\in(0,1)$ be arbitrary and let $\epsilon_0>0$ be given by Proposition~\ref{prop:bicliquealmostconstant} for $d_1=\tfrac p2$ and $d_2=p$. Let $\eps<\min\left(\epsilon_0,\tfrac p2\right)$ be arbitrary.
We denote by $\nu_i$ the measure given on $\Omega_i$, $i=1,2$.
The definition of the essential supremum, together with Theorem \ref{theorem:LP}, gives that there exist two measurable sets $A\subset \Omega_1$ and $B\subset \Omega_2$ such that $\nu_1(A),\nu_2(B)>0$ and $$\int_{(x,y)\in\Omega_1\times\Omega_2}U(x,y)\differential(\nu_1\times\nu_2)\ge(p-\epsilon)\nu_1(A)\nu_2(B)\;.$$ We put $\delta=\min\left(\nu_1(A),\nu_2(B)\right)$. By rescaling the measures $\nu_1,\nu_2$, we get probability measures $\nu_1^*$ on $A$ and $\nu_2^*$ on $B$. Then we can view~$U\restriction_{A\times B}$ as a bigraphon, which we denote by $U^*$. Note that $U^*$ is $(p-\eps,\epsilon)$-constant (and thus also $(p-\eps,\epsilon_0)$-constant as $\epsilon<\epsilon_0$).

Consider now the sampling process to generate $B\sim\BRG(n,U)$ as described above. A standard concentration argument gives that with high probability, at least $\frac{\delta n}2$ points $x_i$ sampled in $\Omega_1$ lie in the set $A$, and at least $\frac{\delta n}2$ points $y_j$ sampled in $\Omega_2$ lie in the set $B$. In other words, with high probability we can find a copy of $\BRG(\frac{\delta n}2,U^*)$ inside $\BRG(n,U)$. Looking at the biclique number, we get that for $\ell=(1-\alpha)\cdot \frac{2}{\log 1/(p-\eps)}\cdot \log(\delta n/2)$, we have
$$\Pr[\omega_2(\BRG(n,U))\ge \ell]\ge \Pr\left[\omega_2(\BRG(\tfrac{\delta n}2,U^*))\ge \ell\right]-o(1)\ge 1-o(1)\;,$$
where the last inequality follows from Proposition~\ref{prop:bicliquealmostconstant}. Since $\log(\delta n/2)=(1+o(1))\log n$, and since $\alpha\in(0,1)$ and $\eps<\min\left(\epsilon_0,\tfrac p2\right)$ were arbitrary, the claim follows.
\qed

\section{Formula for graphs with logarithmic clique number}\label{sec:formulaheuristic}
In this  section we try to informally justify Theorem~\ref{thm:main}. While we believe that the derivation here captures the essence of the problem, the actual proof, presented in Section~\ref{sec:mainproof}, is quite different. At the end of this section we comment on what fails in turning the heuristics into a rigorous argument. 

\subsection{First moment for a 2-step graphon}
\label{subsection:FirstMoment}
Let us try to gain some intuition on Theorem~\ref{thm:main} by looking at one of the simplest non-constant graphons. Let $W:\Omega^2\rightarrow [0,1]$ be represented on the unit interval $\Omega$ (c.f. Fact~\ref{fact:representinterval}) and defined by
\begin{equation}\label{eq:our2stepgraphon}
W(x,y)=\begin{cases} 
p_{11} &\mbox{if $x,y\in \Omega_1$}, \\
p_{22} & \mbox{if $x,y\in \Omega_2$, and}\\
p_{12} & \mbox{otherwise}.
\end{cases}
\end{equation}
Here, $p_{11},p_{22},p_{12}\in(0,1)$ are arbitrary, and $\Omega$ is partitioned arbitrarily into two measurable sets $\Omega_1$ and $\Omega_2$ of positive measures $\beta_1$ and $\beta_2$. 

Our aim is to determine for which numbers $c\in \RR^+$ there typically exists a clique of order $c\log n$ in $G\sim\RG(n,W)$, and for which $c$'s there is typically none.
So let us fix $c\in \RR^+$ and let
$X_n$ count the number of cliques of order $c\log n$ in $G\sim \RG(n,W)$. By Markov's Inequality, $\omega(G)$ will be typically smaller than $c\log n$ in the regime when $\Exp[X_n]\rightarrow 0$. On the other hand, it is plausible that a second moment argument will give that typically $\omega(G)\ge c\log n$ when $\Exp[X_n]\rightarrow +\infty$. With this belief --- which is supported by the success of a second-moment argument in the proof of Theorem~\ref{thm:biclique} --- let us estimate $\Exp[X_n]$. Actually, we rather look at a refined quantity $Y^{\alpha_1,\alpha_2}_n(G)$ which is defined as the number of cliques in $G$ that consist of $\alpha_1\log n$ vertices whose representation on $\Omega$ lies in $\Omega_1$, and $\alpha_2\log n$ vertices that are represented in $\Omega_2$. We have
$$\Exp[X_n]=\sum_{m=0}^{c\log n}\Exp\left[Y^{m/\log n,c-m/\log n}_n\right]\;.$$
We expect the quantities $Y^{\alpha_1,\alpha_2}_n$ to be either super-polynomially small or super-polynomially large. Since the sum above has only $\Theta(\log n)$-many summands, we expect that
\begin{equation}\label{eq:ifandonly}
\Exp[X_n]\rightarrow +\infty \quad \text{if and only if}\quad \exists \alpha_1,\alpha_2\ge 0\text{ such that }\alpha_1+\alpha_2=c\text{ and }\Exp[Y^{\alpha_1,\alpha_2}_n]\rightarrow+\infty\;.
\end{equation}

For a clique that contributes to $Y^{\alpha_1,\alpha_2}_n(G)$ to be present, $\alpha_1\log n \choose 2$ edges in the $(\Omega_1\times \Omega_1)$-part of $W$, $\alpha_2\log n \choose 2$ edges in the $(\Omega_2\times \Omega_2)$-part, and $\alpha_1\alpha_2\log^2 n$ edges in the $(\Omega_1\times \Omega_2)$-part must be present in the specific locations of perspective cliques or complete bipartite graphs. By the Law of Large Numbers, approximately $\beta_1 n$ points in the sampling process for $\RG(n,W)$ end up in $\Omega_1$ and approximately $\beta_2 n$ points end up in $\Omega_2$. We get
\begin{align}
\begin{split}\label{eq:FirstMomentY}
\Exp[Y^{\alpha_1,\alpha_2}_n]&\approx {\beta_1 n\choose \alpha_1 \log n}{\beta_2 n\choose \alpha_2 \log n}p_{11}^{\alpha_1\log n \choose 2}p_{22}^{\alpha_2\log n \choose 2}p_{12}^{\alpha_1\alpha_2\log^2 n}\\
&= \exp\left((1+o(1))\log^2 n\left(\alpha_1+\alpha_2+\frac{\alpha_1^2}2\log p_{11}+\frac{\alpha_2^2}2\log p_{22}+\alpha_1\alpha_2\log p_{12}\right)\right)\;.
\end{split}
\end{align}
Thus, whether $\Exp[Y^{\alpha_1,\alpha_2}_n]\rightarrow 0$ or $\Exp[Y^{\alpha_1,\alpha_2}_n]\rightarrow +\infty$ depends on whether
\begin{equation}\label{eq:simpleEntEne}
\underbrace{
\vphantom{\frac12}
\alpha_1+\alpha_2}_{(*)}+\underbrace{\frac12\left({\alpha_1^2}\log p_{11}+{\alpha_2^2}\log p_{22}+\alpha_1\alpha_2\log p_{12}+\alpha_2\alpha_1\log p_{12}\right)}_{(**)}
\end{equation} 
is negative or positive, respectively. It is straightforward to generalize this formula to graphons with more steps.
Observe also that the values of $\beta_1$ and $\beta_2$ get lost in the transition between the first and the second line of~\eqref{eq:FirstMomentY}, and are immaterial in~\eqref{eq:simpleEntEne} consequently (provided that they are positive). In particular, the step sizes $\beta_i$ could have been ``infinitesimally small''. Thus, we can see a direct correspondence between~$(*)$ and~$(**)$ in~\eqref{eq:A} and~\eqref{eq:simpleEntEne}, where the integration corresponds to passing to infinitesimal steps. In view of this, we will denote the term in (\ref{eq:simpleEntEne}) by $\Gamma(\alpha,W)$, where $\alpha=(\alpha_1,\alpha_2)$. The optimization over $\alpha_1$ and $\alpha_2$ in~\eqref{eq:ifandonly} corresponds to taking the supremum in~\eqref{eq:defKappa}.

\medskip
This is why we call the functions $f$ in~\eqref{eq:defKappa} (or vectors, in case of step-graphons) histograms: they specify the densities of the vertices of the anticipated cliques over the space $\Omega$. Also, motivated by~\eqref{eq:defKappa} and its interpretation above, we say that a histogram $f$ is \emph{admissible} for a graphon $W$ if $\Gamma(f,W)\ge 0$.

Last, let us note that a physicist  might refer to~$(*)$ as the ``entropy contribution'', as it comes from the choice of the vertices of a clique, while~$(**)$ could be referred to as the ``energy'' needed to include all required edges of that clique.

\subsection{Introducing the second moment to the example}\label{ssec:IntroSecondMoment}
So far, our prediction was based on a first moment argument. Combined with Markov's Inequality this gives readily an upper bound on the typical clique number of $\RG(n,W)$. We now want to complement the upper bound with a lower bound based on a second moment argument. Let us first recall the situation in the setting of the Erd\H{o}s--R\'enyi random graphs $\RG(n,p)$. There, a straightforward calculation for the random variable $X_n$ counting cliques of order $c\log n$ (where $c>0$ is fixed) gives that $\Exp[X_n]^2=(1+o(1))\Exp[X_n^2]$ if and only if $\Exp[X_n]\rightarrow +\infty$. Thus, the first and the second moment start working together at the same time.

The situation is more complicated in the model $\RG(n,W)$. We will illustrate this on the graphon $W$ from~\eqref{eq:our2stepgraphon}. Suppose that $\alpha_1,\alpha_2\ge0$ are such that~\eqref{eq:simpleEntEne} is positive, and we ask in hope whether 
\begin{equation}\label{eq:gettingthesecondmoment}
\Exp[Y^{\alpha_1,\alpha_2}_n]^2=(1+o(1))\Exp[(Y^{\alpha_1,\alpha_2}_n)^2]
\end{equation}
In~\eqref{eq:FirstMomentY}, we provided asymptotics for $\Exp[Y^{\alpha_1,\alpha_2}_n]$.
Thus, to understand whether we have~\eqref{eq:gettingthesecondmoment}, we need to calculate $\Exp[(Y^{\alpha_1,\alpha_2}_n)^2]$. We have 
\begin{equation}
\label{eq:secondmomentBASIC}
\Exp[(Y^{\alpha_1,\alpha_2}_n)^2]=\Exp\left[\left(\sum_K \mathbf{1}_{\text{$K$ induces a clique}}\right)^2\right]=\sum_{K,L}\Pr[\text{$K$ and $L$ induce cliques}]\;,
\end{equation}
where $K$ and $L$ range over all sets of vertices with $\alpha_1 \log n$ vertices represented in $\Omega_1$ and $\alpha_2 \log n$ vertices represented in $\Omega_2$. Let $K_1$ be the vertices of $K$ represented in $\Omega_1$, and let $K_2$, $L_1$, and $L_2$ be defined analogously.
It is clear that $|K_1\setminus L_1|=|L_1\setminus K_1|$ and $|K_2\setminus L_2|=|L_2\setminus K_2|$.
So for fixed sets $K$ and $L$, we have
$$\Pr[\text{$K$ and $L$ induce cliques}]=p_{11}^{{|K_1\cup L_1|\choose 2}-|K_1\setminus L_1|^2}\cdot
p_{22}^{{|K_2\cup L_2|\choose 2}-|K_2\setminus L_2|^2}\cdot
p_{12}^{|K_1\cup L_1|\cdot|K_2\cup L_2|-2|K_1\setminus L_1||K_2\setminus L_2|}
\;.$$
Thus, grouping~\eqref{eq:secondmomentBASIC} depending on the values of $m_1=|K_1\cap L_1|$ and $m_2=|K_2\cap L_2|$ we get 
\begin{equation}
\begin{aligned}
\label{eq:E(Y^2)}
\Exp[(Y^{\alpha_1,\alpha_2}_n)^2]\approx\sum_{m_1=0}^{\alpha_1\log n}\;\sum_{m_2=0}^{\alpha_2\log n}
&{\beta_1 n\choose m_1\ |\ \alpha_1\log n-m_1\ |\ \alpha_1\log n-m_1}\\
\times&{\beta_2 n\choose m_2\ |\ \alpha_2\log n-m_2\ |\ \alpha_2\log n-m_2}\\
\times&
p_{11}^{{2\alpha_1\log n-m_1\choose 2}-(\alpha_1\log n-m_1)^2}\cdot
p_{22}^{{2\alpha_2\log n-m_2\choose 2}-(\alpha_2\log n-m_2)^2}\\
\times&
p_{12}^{(2\alpha_1\log n-m_1)\cdot(2\alpha_2\log n-m_2)-2(\alpha_1\log n-m_1)(\alpha_2\log n-m_2)}
\;,
\end{aligned}
\end{equation}
where the approximate equality represents the fact that we assumed on the right-hand side that exactly $\beta_i n$ vertices are represented in $\Omega_i$. Let us write $\gamma_i=m_i/\log n$, and let us write $z^{\gamma_1,\gamma_2}_n$ for the individual summands on the right-hand side of (\ref{eq:E(Y^2)}). Routine manipulations give that 
\begin{align*}
\frac{\log z^{\gamma_1,\gamma_2}_n}
{\log^2 n}&\approx
\gamma_1+\gamma_2+2(\alpha_1-\gamma_1)+2(\alpha_2-\gamma_2)\\
&+
(\alpha_1^2-\tfrac12\gamma_1^2)\log p_{11}+(\alpha_2^2-\tfrac12\gamma_2^2)\log p_{22}+(2\alpha_1\alpha_2-\gamma_1\gamma_2)\log p_{12}\;.
\end{align*}
Thus, if we want the second moment~\eqref{eq:gettingthesecondmoment} to work then comparing the calculations above with~\eqref{eq:FirstMomentY}, we must have for each $\gamma_1\in[0,\alpha_1]$ and
$\gamma_2\in[0,\alpha_2]$ that
\begin{align*}
2&\left(\alpha_1+\alpha_2+\frac{\alpha_1^2}2\log p_{11}+\frac{\alpha_2^2}2\log p_{22}+\alpha_1\alpha_2\log p_{12}\right)\\
&\gtrapprox 
\gamma_1+\gamma_2+2(\alpha_1-\gamma_1)+2(\alpha_2-\gamma_2)+
(\alpha_1^2-\tfrac12\gamma_1^2)\log p_{11}+(\alpha_2^2-\tfrac12\gamma_2^2)\log p_{22}+(2\alpha_1\alpha_2-\gamma_1\gamma_2)\log p_{12}\;,
\end{align*}
which rewrites as $\Gamma(\gamma,W)\gtrapprox 0$. To summarize, to justify~\eqref{eq:defKappa} (at least for step-functions), it would suffice to have the following.
\begin{dreamlemma*}
If $W$ is a step-graphon with $k$ steps and $\alpha\in\mathbb [0,+\infty)^k$ is a vector with $\Gamma(\alpha,W)> 0$, then for all $\gamma\in \BOX(\alpha)$ we have $\Gamma(\gamma,W)\ge 0$.
\end{dreamlemma*}
This, however, does not hold in general. Indeed, take for example
$$p_{11}=e^{-3}, p_{12}=p_{22}=e^{-\frac{1}{4}}, \quad \alpha_1=1, \alpha_2=1, \quad \gamma_1=1, \gamma_2=0.$$
We have $\Gamma(\alpha,W)=\frac{1}{8}>0$ but $\Gamma(\gamma,W)=-\frac{1}{2}<0$. 
It is worth explaining what is happening in the example in words. The parameters $p_{11}$ and $\alpha_1$ are set so that asymptotically almost surely, $\RG(\frac{n}2,p_{11})$ contains no cliques of order $\alpha_1\log n$. However, in the rare cases when $\RG(\frac{n}2,p_{11})$ (viewed as a subgraph of $\RG(n,W)$) does contain such a clique, there are typically many ways of extending it on the $\Omega_2$-part by $\alpha_2\log n$ additional vertices, thus inflating $\Exp[Y_n^{\alpha_1,\alpha_2}]$ substantially.

The above suggests a correction for~\eqref{eq:defKappa} in that we should range only over those histograms $f$ for which $\Gamma(g,W)\ge 0$ for all histograms $g\in\BOX(f)$. Note also that the \emph{necessity} of testing the admissibility condition over all sub-histograms of $f$ has a clear combinatorial interpretation: If cliques with a given histogram typically appear, then for each given sub-histogram cliques with that sub-histogram must appear (just because a subset of a clique again induces a clique).

\medskip
Now, after all the arguing why \eqref{eq:defKappa} should seem wrong, let us explain why it is actually all right. We show in Lemma~\ref{lem:maxl1} that for any histogram $f$ attaining the supremum in~\eqref{eq:defKappa} we automatically have that all sub-histograms are admissible (recall that a histogram $h$ is admissible for a graphon $W$ if $\Gamma(h,W)\ge 0$). If the supremum is not attained then for each histogram $f$ almost attaining the supremum, we have $\Gamma(g,W)\gtrapprox 0$ for all sub-histograms $g$, which is sufficient for the argument.

\subsection{Failure of turning the above heuristics into a rigorous argument}\label{ssec:failure}
There are two types of errors that we introduced in the above argument.  Firstly, the ``little imprecisions'' when we replaced a sum by its maximal term (such as in~\eqref{eq:ifandonly}) or when we used the $\lessapprox$-symbol. Each such step introduces an error of $o(1)$ to the quantities $\frac{\log X_n}{\log^2 n}$ and $\frac{\log Y_n^{\alpha_1,\alpha_2}}{\log^2 n}$. That means, that actually we can only conclude that
$$\Exp[Y^{\alpha_1,\alpha_2}_n]^2=
\exp\left(o(\log^2 n)\right)\Exp[(Y^{\alpha_1,\alpha_2}_n)^2]\;,$$
which is too crude for the second moment argument to work. 

Secondly, the notion of a ``set of vertices following a certain histogram'' makes sense only in the stochastic block model, but not when we have a finite set of vertices in an uncountable probability space. Let us jump ahead and note that in the rigorous proof in Section~\ref{sec:mainproof} we, in a sense, are able to make use of histograms in the continuous setting. Namely, Lemma~\ref{lem:secondsamplinglemma} allows us to discretize a given graphon in an appropriate sense, after which it does make sense to talk about histograms. 

Let us remark that for the stochastic block model the first issue (which is the only in that case) can be dealt with by pedestrian calculations, thus yielding a routine proof of Theorem~\ref{thm:main} for the special class of stochastic block models.

\section{Tools for the proof of Theorem~\ref{thm:main}}\label{sec:tools}
In this section we prepare tools for the lower bound in Theorem~\ref{thm:main}.  In Section~\ref{ssec:maxl1} we state and prove Lemma~\ref{lem:maxl1} which asserts that if $f^*$ is a histogram almost attaining the supremum in~\eqref{eq:defKappa} then $\Gamma(f,W)$ is almost positive for all subhistograms $f\le f^*$. The need for this lemma was motivated in Section~\ref{ssec:IntroSecondMoment}. In Section~\ref{ssec:xi} we introduce a new graphon parameter $\xi(W)$. This parameter is motivated by controlling the second moment of the number of cliques of a given size. All the work in Section~\ref{sec:tools} steers towards deriving the two main results of this section, Lemma~\ref{lem:zoom} and Lemma~\ref{lem:firstandsecond}. The former lemma asserts that each graphon $W$ contains a subgraphon  $U$ with $\xi(U)\approx\frac{1}{\kappa(W)}$. The latter asserts that $\omega(\RG(n,U))\gtrapprox \frac{1}{\xi(U)}\cdot\log n$. These two lemmas combine easily to give the proof of the lower bound in Theorem~\ref{thm:main} (as is shown in Section~\ref{sec:mainproof}).

\subsection{Subhistograms of optimal histograms are admissible}\label{ssec:maxl1}
The main result of this section, Lemma~\ref{lem:maxl1}, tells us that if $f^*$ is a histogram almost attaining the supremum in~\eqref{eq:defKappa} then $\Gamma(f,W)$ is almost positive for all subhistograms $f\le f^*$. We showed that this particular case of the (false, in general) Dream Lemma is needed for the second moment to work. The proof of Lemma~\ref{lem:maxl1} is technical, building on Lemma~\ref{cl:AndrasClaim}. It turns out that in those situations when the supremum in~\eqref{eq:defKappa} is attained, Lemma~\ref{lem:maxl1} has a much shorter (but conceptually the same) proof. We offer this simplified statement in Lemma~\ref{lem:maxl1App} in the Appendix.

	\begin{lemma}\label{cl:AndrasClaim}
		Suppose that $W$ is an arbitrary  graphon with $0<\essinf W\le\esssup W<1$.
		Then there is a constant $K>0$ depending only on the graphon $W$ such that the following holds:
		Let $g$ be an arbitrary histogram admissible for $W$ and let $\delta\in(0,1)$.
		Suppose that $a\in (0,1)$ and that $g=g'+g''$ for some non-trivial histograms $g'$ and $g''$ such that $\|g'\|_1<\|g\|_1-\delta$. 
		Then either $\Gamma(g')\ge -a$, or there exists a histogram $g^*$ which is admissible for $W$ and for which we have
		\begin{equation}\label{eq:biggerg}
		\|g^*\|_1\ge \|g\|_1+K\delta^3a^{\frac52}\;.
		\end{equation}
	\end{lemma}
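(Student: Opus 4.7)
The plan is to construct $g^*$ in two stages: first, partially ``shrink'' the violating part $g'$ inside $g$ to obtain a strictly admissible histogram with a computable slack in $\Gamma$; then uniformly scale the result up to the admissibility boundary $\Gamma=0$. Throughout set $u:=-\Gamma(g')>a$, $B:=-\int_{\Omega^2}g'(x)g''(y)\log W(x,y)\,\differential(\nu^2)\geq 0$, and $c_0:=-\log(\esssup W)>0$ (finite and positive by the hypothesis on $W$); note also that $\|g''\|_1>\delta$, since $\|g'\|_1<\|g\|_1-\delta$.

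For $\mu\in[0,1]$ set $h_\mu:=(1-\mu)g'+g''$. Expanding $\Gamma$ directly and using the decomposition $\Gamma(g)=\Gamma(g')+\Gamma(g'')-B$ yields the clean identity
\begin{equation*}
\Gamma(h_\mu)=\Gamma(g)+\mu\bigl[(1-\mu)\|g'\|_1+(2-\mu)u+B\bigr],
\end{equation*}
which is non-negative on $[0,1]$ and strictly positive for $\mu>0$. Setting $c_\mu:=\|h_\mu\|_1/(\|h_\mu\|_1-\Gamma(h_\mu))\geq 1$, the hypothesis $\esssup W<1$ makes the denominator strictly positive (so $c_\mu<\infty$), while $\essinf W>0$ keeps all the integrals finite; in particular $g^*_\mu:=c_\mu h_\mu$ is a genuine non-negative histogram with $\Gamma(g^*_\mu)=0$. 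Combining the elementary estimate $\|g^*_\mu\|_1-\|h_\mu\|_1\geq\Gamma(h_\mu)$ (immediate from $c_\mu\geq 1$) with $\|h_\mu\|_1=\|g\|_1-\mu\|g'\|_1$ now gives
\begin{equation*}
\|g^*_\mu\|_1-\|g\|_1\;\geq\;\Gamma(g)+\mu(2u+B)-\mu^2(u+\|g'\|_1),
\end{equation*}
a concave quadratic in $\mu$ whose unconstrained maximum is attained at $\mu^*:=(2u+B)/(2(u+\|g'\|_1))$.

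The proof then finishes by a short case analysis based on the pointwise bound $-\log W\geq c_0$ a.e., which gives $B\geq c_0\|g'\|_1\|g''\|_1\geq c_0\delta\|g'\|_1$. If $\mu^*>1$ (equivalently $B>2\|g'\|_1$), the quadratic is increasing on $[0,1]$ and $\mu=1$ already delivers gain $\geq u+B-\|g'\|_1\geq u\geq a$. Otherwise $\mu^*\in[0,1]$ and the gain is at least $(2u+B)^2/(4(u+\|g'\|_1))$: if $u\geq\|g'\|_1$ the denominator is at most $2u$ and the numerator at least $4u^2$, so gain $\geq u/2\geq a/2$; if $u<\|g'\|_1$, the AM-GM bound $(2u+B)^2\geq 8uB$ together with $u+\|g'\|_1\leq 2\|g'\|_1$ yield gain $\geq uB/\|g'\|_1\geq c_0 u\delta\geq c_0 a\delta$. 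In every case the gain is at least $(c_0/2)\cdot a\delta$, and because $a,\delta\in(0,1)$ this dominates $K\delta^3 a^{5/2}$ for $K:=c_0/2$, a constant depending only on $W$.

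The main technical point requiring care will be the clean derivation of the quadratic identity for $\Gamma(h_\mu)$ (a purely algebraic expansion, but easy to mis-sign) together with checking exhaustiveness of the case split. I do not anticipate a serious analytic obstacle, since the two-sided control on $W$ keeps all quadratic forms entering the argument finite and uniformly bounded, while the non-negativity and measurability of $g^*_\mu$ are automatic from its construction as a scaled non-negative combination of histograms.
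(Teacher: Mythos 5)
Your proof is correct, and it takes a genuinely different (and in fact cleaner) route than the paper's. The paper perturbs both components simultaneously, considering $(1-\eps_1)g'+(1+\eps_2)g''$ with $\eps_2=(1+\beta)\tfrac{\|g'\|_1}{\|g''\|_1}\eps_1$, and must choose $\eps_1$ and $\beta$ as explicit minima of several ratios of the quantities $A,B,C,D,E$ to keep $\Gamma\ge 0$ while gaining $\beta\eps_1\|g'\|_1$ in norm; the resulting bounds go through $\kappa(W)$ and $\log(1/\essinf W)$ and produce the stated gain $K\delta^3a^{5/2}$. You instead shrink only $g'$ along the one-parameter family $h_\mu=(1-\mu)g'+g''$, compute the exact slack $\Gamma(h_\mu)=\Gamma(g)+\mu[(1-\mu)\|g'\|_1+(2-\mu)u+B]$ (I verified this identity and the gain formula $\Gamma(g)+\mu(2u+B)-\mu^2(u+\|g'\|_1)$), and then rescale to the boundary $\Gamma=0$ --- the same normalization trick the paper uses in the proof of Fact~\ref{fact:kappaalt}. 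This reduces the whole lemma to maximizing an explicit concave quadratic in $\mu$, and your case analysis ($\mu^*>1$; $\mu^*\le 1$ with $u\ge\|g'\|_1$ or $u<\|g'\|_1$) is exhaustive since $\mu^*>0$ automatically. What your approach buys is a substantially stronger bound, a gain of order $a\delta$ rather than $a^{5/2}\delta^3$, with a constant depending only on $\esssup W$ rather than on $\kappa(W)$ and $\essinf W$. The only blemish is the last line: the unified lower bound $(c_0/2)a\delta$ does not follow from cases 1 and 2a when $c_0=\log(1/\esssup W)>1$, since those cases only give $a$ and $a/2$; you should take $K=\min(\tfrac12,c_0)$ (the three cases give gains $a$, $a/2$, and $c_0a\delta$, each of which dominates $\min(\tfrac12,c_0)\,a\delta\ge \min(\tfrac12,c_0)\,\delta^3a^{5/2}$). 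This is a one-line fix and does not affect the validity of the argument.
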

	\begin{proof}
		Since we shall work exclusively with the graphon $W$, we write $\Gamma(\cdot)$ as a shortcut for $\Gamma(\cdot,W)$. Let us write $m^-=\essinf W$ and $m^+=\esssup W$.
		
		Let us fix numbers $a,\delta\in (0,1)$ and a decomposition $g=g'+g''$ of $g$ into non-trivial histograms $g',g''$ such that $\|g'\|_1<\|g\|_1-\delta$.
		For $\epsilon_1\in(0,1)$ and $\epsilon_2>0$, let us write $g^*(\eps_1,\eps_2)=(1-\epsilon_1)g'+(1+\epsilon_2)g''$.
		Let us also write
		\begin{align}
		\begin{split}\label{eq:ABCDE}
		A&=\|g'\|_1\;, \\  
		B&=\|g''\|_1\;,\\
		C&=-\frac{1}{2}\int_{(x,y)\in\Omega^2} g'(x)g'(y)\log W(x,y)\;\differential(\nu^2)\;,\\ D&=-\frac{1}{2}\int_{(x,y)\in\Omega^2} g''(x)g''(y)\log W(x,y)\;\differential(\nu^2), \\
		 E&=-\int_{(x,y)\in\Omega^2} g'(x)g''(y)\log W(x,y)\;\differential(\nu^2)\;,
		\end{split}
		\end{align}
		and note that $A,B,C,D,E> 0$.
		We have $\Gamma(g')=A-C$ and $\Gamma(g)=A+B-C-D-E$.  
		There is nothing to prove when $\Gamma(g')\ge -a$. Thus, assume otherwise. Then
		\begin{equation}\label{eq:ACx}
		A<C-a\;.
		\end{equation}
		Upper-bounding $C$ by $\tfrac 12\|g'\|_1^2\log(1/m^-)$ and using that $C>a$, we get
		\begin{equation}\label{eq:g'largenorm}
		\|g'\|_1>\sqrt{\frac{2a}{\log\left(\nicefrac{1}{m^-}\right)}}\;.
		\end{equation}
		For each $\epsilon_1\in(0,1)$ and $\epsilon_2>0$, the difference $\Gamma(g^*(\eps_1,\eps_2))-\Gamma(g)$ can be expressed as
		$$(1-\eps_1)A + (1+\eps_2)B - (1-\eps_1)^2C - (1+\eps_2)^2D - (1-\eps_1)(1+\eps_2)E - \left( A + B - C - D - E \right)$$
		$$= \eps_1 ( -A + 2C + E ) + \eps_2( B - 2D -E ) - \eps_1^2C - \eps_2^2D + \eps_1\eps_2E.$$
		In particular, if $\eps_2=(1+\beta)\frac{A}{B}\eps_1$ (where $\epsilon_1\in (0,1)$ and $\beta>0$ will be determined later) then we have
		\begin{align}
		\begin{split}
		\label{choice of eps2}
		&\Gamma\left(g^*(\eps_1,(1+\beta)\tfrac{A}{B}\eps_1)\right)-\Gamma(g) \\
		= 
		&\eps_1\left(2C+E-\tfrac{2AD}{B}-\tfrac{AE}{B}\right) + \eps_1\cdot\eps_1\left(-C-\tfrac{A^2D}{B^2}+\tfrac{AE}{B}-2\tfrac{A^2}{B^2}\beta D+\beta\tfrac{AE}{B}-\tfrac{A^2}{B^2}\beta^2D\right) + \epsilon_1\beta(A-\tfrac{2AD}{B}-\tfrac{AE}{B})\\
		\ge 
		&\eps_1\underbrace{\left(2C+E-\tfrac{2AD}{B}-\tfrac{AE}{B}\right)}_{\texttt{T1}} - \eps_1\cdot\underbrace{\eps_1\left(C+\tfrac{A^2D}{B^2}+2\tfrac{A^2}{B^2}\beta D+\tfrac{A^2}{B^2}\beta^2D\right)}_{\texttt{T2}} - \epsilon_1\underbrace{\beta(\tfrac{2AD}{B}+\tfrac{AE}{B})}_{\texttt{T3}}\;.
		\end{split}
		\end{align}
		Let us expand the term \texttt{T1}.
		\begin{equation*}
		\begin{split}
		2C+E-\frac{2AD}{B}-\frac{AE}{B}&\gByRef{eq:ACx}2A-\frac{2AD}{B}-\frac{AE}{B}+2a\\
		&>2\frac{A}{B} ( B - D - E)+2a\\
		&>2\frac{A}{B}\left(B-D-E+(A-C)\right)+2a\\
		&=2\frac{A}{B}\Gamma(g)+2a\\
		&\ge 2a.
		\end{split}
		\end{equation*}
		Now, set $\eps_1=\frac{a}{4}\min(\tfrac{1}{C},\tfrac{B^2}{2A^2D})$ and $\beta=\min(1,\tfrac{aB}{4AD},\tfrac{aB}{2AE})$. Routine calculations give that each of the terms \texttt{T2} and \texttt{T3} is smaller than $a$. Plugging the bounds above in~\eqref{choice of eps2}, we get
		\begin{equation}
		\label{unif positive}
		\Gamma\left(g^*(\eps_1,(1+\beta)\tfrac{A}{B}\eps_1)\right)\ge \Gamma(g)\ge 0.
		\end{equation}
		We have
		\begin{equation*}
		C=\frac{1}{2}\int_{(x,y)\in\Omega^2} g'(x)g'(y)\log \left(\nicefrac{1}{W(x,y)}\right)\;\differential(\nu^2) \le
		\frac{1}{2}\int_{(x,y)\in\Omega^2} g(x)g(y)\log\left(\nicefrac{1}{m^-}\right) \;\differential(\nu^2)\le
		\frac{1}{2}\kappa(W)^2\log \frac{1}{m^-}
		\end{equation*}
		and similarly $D \le \frac{1}{2}\kappa(W)^2\log\left(\nicefrac{1}{m^-}\right)$ and $E \le \kappa(W)^2\log\left(\nicefrac{1}{m^-}\right)$.\Referee{9} We also have
		\begin{equation*}
		\frac{B}{A} =
		\frac{\|g-g'\|_1}{\|g'\|_1} \ge
		\frac{\delta}{\|g\|_1-\delta} \ge
		\frac{\delta}{\kappa(W)-\delta}\ge\frac{\delta}{\kappa(W)}\;.
		\end{equation*}
		Therefore
		\begin{align}\label{eps_1beta}
		\begin{split}
		\eps_1\beta&\ge\min\left(\frac{a}{4C},\frac{aB^2}{8A^2D},\frac{a^2B}{16ACD},\frac{a^2B^3}{32A^3D^2},\frac{a^2B}{8ACE},\frac{a^2B^3}{16A^3DE}\right)\\
		&\ge a^2\min\left(\frac{1}{4C},\frac{B^2}{8A^2D},\frac{B}{16ACD},\frac{B^3}{32A^3D^2},\frac{B}{8ACE},\frac{B^3}{16A^3DE}\right)\\
		&\ge
		a^2\frac{1}{\log^2 \left(\nicefrac{1}{m^-}\right)}\min\left(
		\frac{\log \left(\nicefrac{1}{m^-}\right)}{2\kappa(W)^2},\frac{\delta^2\log \left(\nicefrac{1}{m^-}\right)}{4\kappa(W)^4},\frac{\delta}{4\kappa(W)^5},\frac{\delta^3}{8\kappa(W)^7},\frac{\delta}{4\kappa(W)^5},\frac{\delta^3}{8\kappa(W)^7}\right)\\
		&\ge a^2\delta^3\frac{1}{\log^2 \left(\nicefrac{1}{m^-}\right)}\min\left(
		\frac{\log \left(\nicefrac{1}{m^-}\right)}{2\kappa(W)^2},\frac{\log \left(\nicefrac{1}{m^-}\right)}{4\kappa(W)^4},\frac{1}{4\kappa(W)^5},\frac{1}{8\kappa(W)^7},\frac{1}{4\kappa(W)^5},\frac{1}{8\kappa(W)^7}\right)\;.
		\end{split}
		\end{align}
		It follows that
		\begin{align*}
		&\|g^*\left(\eps_1,\eps_2\right)\|_1 = (1-\eps_1)A + (1+(1+\beta)\tfrac{A}{B}\eps_1)B=\|g\|_1+\beta\epsilon_1 \|g'\|_1\\ {\stackrel{{(\ref{eq:g'largenorm}), (\ref{eps_1beta})}}{>}}& \|g\|_1+a^{\frac 52}\delta^3\min\left(
		\frac{\log \left(\nicefrac{1}{m^-}\right)}{2\kappa(W)^2},\frac{\log \left(\nicefrac{1}{m^-}\right)}{4\kappa(W)^4},\frac{1}{4\kappa(W)^5},\frac{1}{8\kappa(W)^7},\frac{1}{4\kappa(W)^5},\frac{1}{8\kappa(W)^7}\right)\sqrt{\frac{2}{\log^5\left(\nicefrac{1}{m^-}\right)}}.
		\end{align*}
		This finishes the proof.
	\end{proof}

\begin{lemma}\label{lem:maxl1}
	Suppose that $W$ is an arbitrary  graphon with $0<\essinf W\le\esssup W<1$. Then there exists a number $\gamma_0>0$ and a function $q:(0,\gamma_0)\rightarrow \RR_+$ with $\lim_{\gamma\searrow0}q(\gamma)=0$ such that the following holds.
	Let $f^*$ be an admissible histogram for $W$ and let $\gamma\in(0,\gamma_0)$. Suppose that $\|f^*\|_1\ge \kappa(W)-\gamma$. Then for every $f\in \BOX(f^*)$ we have $\Gamma(f,W)\ge -q(\gamma)$.
\end{lemma}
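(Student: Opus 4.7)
The plan is to argue by contrapositive, using Lemma~\ref{cl:AndrasClaim} as the sole non-trivial engine. Fix a small $\gamma>0$ and set $q(\gamma)=C\gamma^{1/6}$ for a constant $C>0$ depending only on $W$ (to be specified); write $m^-=\essinf W>0$ and $m^+=\esssup W<1$. Suppose, toward contradiction, that some $f\in\BOX(f^*)$ satisfies $\Gamma(f,W)<-q(\gamma)$. The extreme cases are harmless: if $f=0$ a.e.\ then $\Gamma(f,W)=0$, and if $f=f^*$ a.e.\ then $\Gamma(f,W)=\Gamma(f^*,W)\ge0$ by admissibility of $f^*$. Hence we may assume both $g':=f$ and $g'':=f^*-f$ are non-trivial, and we set $\eta:=\|g''\|_1>0$. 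The argument now splits according to whether $\eta$ is small or large relative to $\gamma^{1/6}$.

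\emph{Case A: $\eta\le\gamma^{1/6}$.} A direct expansion, using $f=f^*-g''$ and the symmetry of $\log W$, gives
\[
\Gamma(f,W)-\Gamma(f^*,W)=-\eta-\int_{\Omega^2}f^*(x)g''(y)\log W\,\differential(\nu^2)+\tfrac12\int_{\Omega^2}g''(x)g''(y)\log W\,\differential(\nu^2).
\]
Since $\log W\le 0$ the middle term is non-negative, and since $\log W\ge\log m^-$ the last term is at least $\tfrac12 \log m^-\cdot\eta^2=-\tfrac12|\log m^-|\eta^2$. Therefore
\[
\Gamma(f,W)\ge\Gamma(f^*,W)-\eta-\tfrac12|\log m^-|\eta^2\ge -\eta\bigl(1+\tfrac12|\log m^-|\bigr),
\]
valid for $\eta\le 1$. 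Picking $C>1+\tfrac12|\log m^-|$ contradicts $\Gamma(f,W)<-C\gamma^{1/6}$ in this case.

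\emph{Case B: $\eta>\gamma^{1/6}$.} Apply Lemma~\ref{cl:AndrasClaim} to the decomposition $g=f^*=g'+g''$ with the choices $\delta=\gamma^{1/6}/2$ and $a=q(\gamma)=C\gamma^{1/6}$. The prerequisite $\|g'\|_1<\|g\|_1-\delta$, which says $\eta>\delta$, follows from the Case B hypothesis; both $g',g''$ are non-trivial; and for $\gamma$ sufficiently small, $\delta,a\in(0,1)$. Because our assumption forces $\Gamma(g')<-a$, the lemma places us in its second alternative: there exists an admissible histogram $g^*$ with
\[
\|g^*\|_1\ge\|f^*\|_1+K\delta^3 a^{5/2}\ge(\kappa(W)-\gamma)+\frac{KC^{5/2}}{8}\,\gamma^{11/12}.
\]
Since $11/12<1$, the improvement $\tfrac{KC^{5/2}}{8}\gamma^{11/12}$ strictly exceeds $\gamma$ for all $\gamma<\gamma_0$ once $\gamma_0$ is chosen small enough in terms of $K,C$. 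This gives $\|g^*\|_1>\kappa(W)$, contradicting the definition of $\kappa(W)$ as the supremum over all admissible histograms.

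The entire difficulty is concentrated in Lemma~\ref{cl:AndrasClaim}, whose $K\delta^3 a^{5/2}$ gain is exactly what lets the two cases meet: Case~A can only cope with $\eta\lesssim q(\gamma)$, while Case~B requires the gain $\delta^3 a^{5/2}$ to dominate $\gamma$; these two constraints are simultaneously satisfiable precisely because $3\cdot\tfrac16+\tfrac52\cdot\tfrac16=\tfrac{11}{12}<1$. Any mildly polynomial $q(\gamma)\to 0$ would suffice; $q(\gamma)=C\gamma^{1/6}$ is a convenient explicit choice, and the proof yields $\gamma_0$ and $C$ depending only on $\essinf W$, $\esssup W$, and $\kappa(W)$.
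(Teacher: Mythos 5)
Your proof is correct and follows essentially the same route as the paper's: the same case split on whether $\|f^*-f\|_1$ is small (handled by direct monotonicity of $\Gamma$ under shrinking the histogram) or large (handled by Lemma~\ref{cl:AndrasClaim}, whose second alternative would produce an admissible histogram exceeding $\kappa(W)$), with the exponents chosen so that $K\delta^3a^{5/2}$ beats $\gamma$. The only differences are cosmetic — a contrapositive framing and the choice $q(\gamma)=C\gamma^{1/6}$ in place of the paper's $\max\bigl(\gamma^{1/9},\gamma^{1/5}K^{-2/5}\bigr)$.
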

\begin{proof}
	Let $K$ be the constant from Lemma~\ref{cl:AndrasClaim}. Set $\gamma_0$ so that $\gamma_0<K^2$. Suppose that $\gamma\in(0,\gamma_0)$ and $f^*$ is an admissible histogram with $\|f^*\|_1\ge \kappa(W)-\gamma$.
	
	We write $\Gamma(\cdot)$ as a shortcut for $\Gamma(\cdot,W)$.
	
	Let $f\in \BOX(f^*)$ be non-trivial. Suppose first that $\|f\|_1\ge \|f^*\|_1-\sqrt[9]{\gamma}$. Using~\eqref{eq:A} we get 
	\begin{equation}\label{eq:Gamma1}
		\begin{split}
			\Gamma(f)&=\int_{x\in\Omega} f(x)\;\differential(\nu)+\frac12\int_{(x,y)\in\Omega^2} f(x)f(y)\log W(x,y)\;\differential(\nu^2)\\
			&\ge\int_{x\in\Omega} f^*(x)\;\differential(\nu)-\sqrt[9]{\gamma}+\frac12\int_{(x,y)\in\Omega^2} f^*(x)f^*(y)\log W(x,y)\;\differential(\nu^2)\\
			&= \Gamma(f^*)-\sqrt[9]{\gamma}\ge -\sqrt[9]{\gamma}\;.
		\end{split}
	\end{equation}
	Suppose next that $\|f\|_1< \|f^*\|_1-\sqrt[9]{\gamma}$. We apply Lemma~\ref{cl:AndrasClaim} to
	$$g=f^*,\ \ \ g'=f,\ \ \ g''=f^*-f,\ \ \ \delta=\sqrt[9]{\gamma},\ \ \ a=\frac{\sqrt[5]{\gamma}}{K^{\frac{2}{5}}}\;.$$
	Then there is no histogram $g^*$ admissible for $W$ such that $\|g^*\|_1\ge \|f^*\|_1+K\delta^3a^{\frac52}$ as the right hand side equals $\|f^*\|_1+\gamma^{\frac{5}{6}}>\|f^*\|_1+\gamma\ge\kappa(W)$.
	So the lemma tells us that
	\begin{equation}\label{eq:Gamma2}
		\Gamma(f)\ge -\frac{\sqrt[5]{\gamma}}{K^{\frac{2}{5}}}\;.
	\end{equation}
	
	Combining \eqref{eq:Gamma1} with~\eqref{eq:Gamma2}, it suffices to define the function $q$ by $$q(\gamma)=\max\left(\sqrt[9]{\gamma},\frac{\sqrt[5]{\gamma}}{K^{\frac{2}{5}}}\right)\;,\ \ \ q>0\;,$$
	since then it is clear that $\lim_{\gamma\searrow0}q(\gamma)=0$.
\end{proof}

\subsection{The graphon parameter $\xi(\cdot)$}\label{ssec:xi}
In Section~\ref{ssec:IntroSecondMoment} we outlined why the second moment argument for counting cliques should go through. (Recall that the second moment argument is needed to prove the lower bound in Theorem~\ref{thm:main}, which is the more difficult half of the statement.) For the actual execution of this step, however, we need to introduce a new graphon parameter. This parameter is a version of the cut norm with an exotic scaling. Given an arbitrary graphon $W$ represented on a probability space $\Omega$ we define
\begin{equation}\label{eq:defxi}
\xi(W)=\sup_{B\subset \Omega,\nu(B)>0} \frac1{2\nu(B)}\int_{(x,y)\in B\times B} \log (\nicefrac{1}{W(x,y)})\;\differential(\nu^2)\;.
\end{equation}\Referee{10}
The key feature of $\xi(W)$, which we prove in Lemma~\ref{lem:firstandsecond}, is that the second moment argument for counting cliques of order almost $\frac{1}{\xi(W)}\log n$ works. More precisely, in the proof of Lemma~\ref{lem:firstandsecond}, we set up a random variable $Y$ which essentially counts the number of individually-weighted cliques of the said order.\footnote{The weighting of the particular cliques is a technical but important subtlety, see~\eqref{eq:modM} below.} We show that $\Exp[Y^2]\approx\Exp^2[Y]$. This allows us to conclude that there must be at least one clique of such an order.

Of course, Lemma~\ref{lem:firstandsecond} itself is not enough to establish the lower bound in Theorem~\ref{thm:main}: we need to connect the new quantity $\xi(W)$ to the original quantity $\kappa(W)$. Given Lemma~\ref{lem:firstandsecond} described above, we would hope that $\kappa(W)=\frac1{\xi(W)}$. Unfortunately, in general, we only have $\kappa(W)\ge\frac1{\xi(W)}$, see Fact~\ref{fact:xikappa}. Not all is lost though. In Lemma~\ref{lem:zoom} we prove that every graphon $W$ contains a subgraphon $U$ for which $\frac{1}{\xi(U)}> \kappa(W)-\epsilon$ (here, $\epsilon>0$ is arbitrarily small). After picking a subgraphon $U$ for which $\frac{1}{\xi(U)}\approx \kappa(W)$ we continue with the proof of the lower bound in Theorem~\ref{thm:main} as follows. We prove in Lemma~\ref{lem:firstandsecond} that asymptotically almost surely $\omega(\RG(n,U))\gtrapprox \frac{1}{\xi(U)}\log n$. As described in~\eqref{eq:PsiSubgraphon}, the above combination of Lemma~\ref{lem:zoom} and Lemma~\ref{lem:firstandsecond} will conclude the desired proof of the lower bound in Theorem~\ref{thm:main}.

\medskip

Now, let us state and prove the already advertised Fact~\ref{fact:xikappa}. This fact will not be needed in our proof of Theorem~\ref{thm:main}. However, since it is so basic we record it here. 
\begin{fact}\label{fact:xikappa}
	Let $W$ be an arbitrary graphon on a probability space $\Omega$, and let $U=W\restriction_{A\times A}$ be a subgraphon obtained by restricting $W$ to a set $A$ of positive measure. Then $\frac{1}{\xi(U)}\le \kappa(W)$.
\end{fact}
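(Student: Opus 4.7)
The plan is to test both sides of the inequality against the same natural object, namely the set $A$ itself, so that the two bounds I obtain become exact reciprocals of one another.

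First, I would lower bound $\xi(U)$ by taking $B = A$ in the supremum~\eqref{eq:defxi}. Recall that $U$ is viewed as a graphon on the probability space $(A,\nu_A)$ with $\nu_A = \nu/\nu(A)$, so in particular $\nu_A(A)=1$, and the rescaling $\differential(\nu_A^2) = \differential(\nu^2)/\nu(A)^2$ on $A\times A$ (which is the identity~\eqref{eq:rescale} read from right to left) gives
\begin{equation*}
\xi(U) \;\ge\; \frac{1}{2\nu_A(A)}\int_{A\times A}\log\left(\nicefrac1{W(x,y)}\right)\differential(\nu_A^2) \;=\; \frac{1}{2\nu(A)^2}\int_{A\times A}\log\left(\nicefrac1{W(x,y)}\right)\differential(\nu^2).
\end{equation*}

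Second, I would produce a matching lower bound on $\kappa(W)$ by plugging the indicator histogram $h = \mathbf{1}_A$ into the formula~\eqref{eq:defKappaSimple} for $\kappa(W)$. Since $\|h\|_1 = \nu(A)$ and $h(x)h(y)=\mathbf{1}_{A\times A}(x,y)$, this yields
\begin{equation*}
\kappa(W) \;\ge\; \frac{2\nu(A)^2}{\int_{A\times A}\log\left(\nicefrac1{W(x,y)}\right)\differential(\nu^2)}\,.
\end{equation*}

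Multiplying the two displayed inequalities gives $\xi(U)\cdot\kappa(W)\ge 1$, which rearranges to $\nicefrac{1}{\xi(U)}\le\kappa(W)$, as desired. The degenerate cases (where the integral in the denominator vanishes or is infinite) are handled automatically by the conventions $\nicefrac{a}{0}=+\infty$ for $a\ne 0$ and $\nicefrac{0}{0}=0$ fixed just after~\eqref{eq:defKappaSimple}. There is no substantial obstacle here; the only nuance is keeping track of the rescaling of the measure when passing between $W$ on $\Omega$ and its restriction $U$ on the probability space $(A,\nu_A)$, which is exactly what makes the two test bounds come out as reciprocals.
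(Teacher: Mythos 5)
Your proof is correct and is essentially the paper's own argument: both take $B=A$ in the definition of $\xi(U)$, apply the rescaling identity~\eqref{eq:rescale}, and then compare with $\kappa(W)$ tested on the set $A$ (the paper cites the identity~\eqref{eq:kappa2}, but only its easy direction, which is exactly your step of plugging $h=\mathbf{1}_A$ into~\eqref{eq:defKappaSimple}).
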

\begin{proof}
	By considering the set $B=A$ in~\eqref{eq:defxi} we see that $$\xi(U)\ge \frac{1}{2}\int_{(x,y)\in A^2}\log (\nicefrac{1}{W(x,y)})\;\differential(\nu_A^2)\eqByRef{eq:rescale}\frac{1}2\frac1{{\nu(A)^2}}\int_{(x,y)\in A^2}\log(\nicefrac{1}{W(x,y)})\;\differential(\nu^2)\geByRef{eq:kappa2} \frac{1}{\kappa(W)}\;.$$
\end{proof}

\medskip

In the rest of this section we prove Lemmas~\ref{lem:firstandsecond} and~\ref{lem:zoom}. The paths towards these lemmas are shown in Figure~\ref{fig:towardslemmas7}.
\begin{figure}[t]
    \centering
    \includegraphics{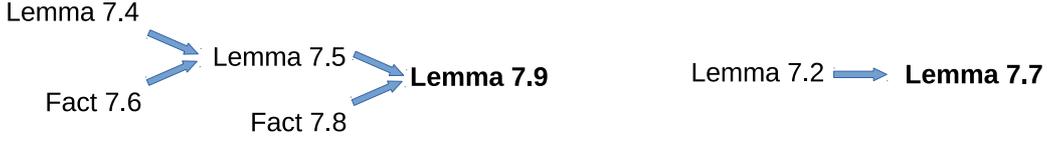}
     \caption{The scheme of the proofs of Lemmas~\ref{lem:firstandsecond} and~\ref{lem:zoom}.}
\label{fig:towardslemmas7}
\end{figure}

For the next lemma, note that if $G$ is a finite graph then the value of $\xi(W_G)$ does not depend on the particular representation $W_G$ of $G$.
\begin{lemma}\label{lem:differencegraphons}
Suppose that $c\in(0,1]$. Let $W$ be a graphon with $\essinf W\ge c$ and $G$ an edge-weighted complete graph with all edge-weights $w(i,j)$ in the interval $[c,1]$. Consider the ``negative logarithms of $W$ and $G$'', that is, an $L^\infty$-graphon $W'(x,y):=\log(\nicefrac{1}{W(x,y)})$ and a weighted graph $G'$ with $V(G')=V(G)$ and weight function $w'(i,j)=\log(\nicefrac{1}{w(i,j)})$. Then for an arbitrary $\gamma\in(0,1]$ we have
$$|\xi(W)-\xi(W_G)|\le \max\left(\tfrac\gamma2\log\tfrac1c,\tfrac1\gamma\delta_\square(W',G')\right)\;.$$
\end{lemma}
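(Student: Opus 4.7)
The plan is to decompose the supremum defining $\xi$ in~\eqref{eq:defxi} according to whether the test set $B$ has measure below or above the threshold $\gamma$; these two regimes will be controlled by the two terms in the claimed maximum. First I would observe that $\xi$ is invariant under measure-preserving rearrangements: if $\varphi$ is a measure-preserving automorphism of the underlying probability space $\Omega$, then for every measurable $B\subset\Omega$ the change of variables $u=\varphi(x),\ v=\varphi(y)$ converts $\frac{1}{2\nu(B)}\int_{B^2}\log(\nicefrac{1}{W^\varphi(x,y)})\,\differential(\nu^2)$ into the same quantity for $W$ with $B$ replaced by $\varphi(B)$, a set of the same measure. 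Hence $\xi(W^\varphi)=\xi(W)$, and similarly for $W_G$. Using the Isomorphism Theorem to put $W'$ and the graphon representation $W_{G'}=\log(\nicefrac{1}{W_G})$ on a common $\Omega$, and taking $\varphi$ almost realizing the infimum in the definition of $\delta_\square(W',G')$, I may replace $W$ by $W^\varphi$ and thereby assume $d_\square(W',W_{G'})\le\delta_\square(W',G')+\epsilon$ for any prescribed $\epsilon>0$.

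With this setup, fix a set $B_0\subset\Omega$ of positive measure nearly attaining $\xi(W)$, so that $\xi(W)\le\frac{1}{2\nu(B_0)}\int_{B_0^2}W'\,\differential(\nu^2)+\epsilon$. In the ``small'' case $\nu(B_0)<\gamma$, the pointwise bound $W'\le\log(\nicefrac{1}{c})$ gives $\frac{1}{2\nu(B_0)}\int_{B_0^2}W'\le\frac{\nu(B_0)}{2}\log(\nicefrac{1}{c})<\frac{\gamma}{2}\log(\nicefrac{1}{c})$, and since $\xi(W_G)\ge 0$ this already yields $\xi(W)-\xi(W_G)<\frac{\gamma}{2}\log(\nicefrac{1}{c})+\epsilon$. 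In the ``large'' case $\nu(B_0)\ge\gamma$, the cut-norm definition applied to $S=T=B_0$ gives $\bigl|\int_{B_0^2}(W'-W_{G'})\bigr|\le d_\square(W',W_{G'})\le\delta_\square(W',G')+\epsilon$; dividing by $2\nu(B_0)\ge 2\gamma$ and bounding $\frac{1}{2\nu(B_0)}\int_{B_0^2}W_{G'}$ by $\xi(W_G)$ produces $\xi(W)\le\xi(W_G)+\frac{\delta_\square(W',G')+\epsilon}{2\gamma}+\epsilon$.

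Sending $\epsilon\to 0$ and combining the two cases bounds $\xi(W)-\xi(W_G)$ by $\max\bigl(\frac{\gamma}{2}\log(\nicefrac{1}{c}),\,\frac{1}{2\gamma}\delta_\square(W',G')\bigr)$, which is at most the claimed $\max\bigl(\frac{\gamma}{2}\log(\nicefrac{1}{c}),\,\frac{1}{\gamma}\delta_\square(W',G')\bigr)$. The reverse direction is obtained identically by swapping the roles of $W$ and $W_G$. There is no serious obstacle in the argument; the only mildly delicate point is the measure-preserving automorphism in the definition of $\delta_\square$, which is absorbed once and for all by the rearrangement invariance of $\xi$.
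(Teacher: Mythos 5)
Your proof is correct and follows essentially the same route as the paper's: split on whether the near-optimal test set for $\xi$ has measure below or above $\gamma$, using the pointwise bound $\log(\nicefrac1c)$ in the first case and the cut norm with $S=T=B_0$ in the second. The only (harmless) difference is bookkeeping: you fix one near-optimal alignment of $W'$ and $W_{G'}$ up front via the rearrangement invariance of $\xi$, whereas the paper chooses the representation of $G'$ after fixing the test set, with a tolerance $\nu(A)/\gamma-1$; both yield the stated bound (yours with the slightly better constant $\tfrac1{2\gamma}$).
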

\begin{proof}
We shall prove the upper bound only for $\xi(W)-\xi(W_G)$. The upper bound on $\xi(W_G)-\xi(W)$ is done completely analogously. Suppose that $W$ is represented on a probability space $\Omega$. Looking at definition~\eqref{eq:defxi}, we need to provide an upper bound
\begin{equation}\label{eq:nowindow}
\underbrace{\frac1{\nu(A)}\int_{(x,y)\in A^2, x<y}\log \left(\nicefrac{1}{W(x,y)}\right)\;\differential(\nu^2)}_{\texttt{S1}} -
\underbrace{
\vphantom{\frac1{\nu(A)}\int_{(x,y)\in A^2, x<y}}
\xi(W_G)}_{\texttt{S2}}
\end{equation}
for each set $A\subset \Omega$ of positive measure. If $\nu(A)\le \gamma$ then the integral is over a set of measure at most $\frac12\nu^2(A)\le\frac\gamma2\nu(A)$. Thus, the term \texttt{S1} can be bounded from above by $-\frac\gamma2\log(\essinf W)\le\frac\gamma2 \log\frac1c$, as needed. 

Suppose next that $\nu(A)>\gamma$. Suppose first that $\delta_\square(W',G')=0$. Using the invertible measure preserving maps from~\eqref{eq:defcutdist}, we know that for each $\epsilon>0$ there exists a graphon representation $W_{G'}$ of $G'$ on $\Omega$ such that $d_\square(W',W_{G'})<\epsilon\nu(A)$. Then
\begin{align*}
&\frac1{\nu(A)}\int_{(x,y)\in A^2, x<y}\log \left(\nicefrac{1}{W(x,y)}\right)\;\differential(\nu^2) -
\frac1{\nu(A)}\int_{(x,y)\in A^2, x<y}{W_{G'}(x,y)}\;\differential(\nu^2)
\\
&=
\frac1{2\nu(A)}\left(\int_{(x,y)\in A^2}\left(W'(x,y)-W_{G'}(x,y)\right)\;\differential(\nu^2)\right)
\\
&\le \frac1{\nu(A)}\cdot d_\square(W',W_{G'})
\le \epsilon\;,
\end{align*}
as was needed.

Suppose next that  $\delta_\square(W',G')>0$. Using the invertible measure preserving maps from~\eqref{eq:defcutdist}, we know that for each $\epsilon>0$ there exists a graphon representation $W_{G'}$ of $G'$ on $\Omega$ such that $d_\square(W',W_{G'})<(1+\epsilon)
\delta_\square(W',W_{G'})$.
We shall fix such a representation $W_{G'}$ for $\epsilon=\frac{\nu(A)}{\gamma}-1$. Then~\eqref{eq:nowindow} can be bounded from above by
\begin{align*}
&\frac1{\nu(A)}\int_{(x,y)\in A^2, x<y}\log \tfrac1{W(x,y)} \;\differential(\nu^2)-
\frac1{\nu(A)}\int_{(x,y)\in A^2, x<y}{W_{G'}(x,y)}\;\differential(\nu^2)
\\
&=
\frac1{2\nu(A)}\left(\int_{(x,y)\in A^2}\left(W'(x,y)-W_{G'}(x,y)\right)\;\differential(\nu^2)\right)
\\
&\le \frac1{\nu(A)}\cdot d_\square(W',W_{G'})
\le \frac1\gamma\cdot\delta_\square(W',W_{G'})\;,
\end{align*}
as was needed.
\end{proof}
\begin{lemma}\label{lem:xismallsubgraphs}
Suppose that $c\in(0,1]$. Let $W$ be a graphon with $\essinf W\ge c$. Suppose that a sequence of integers $(k_n)_{n=1}^\infty$ has the property that $\sqrt{\log n}\le k_n\le \sqrt[3]{n}$. Suppose that $\epsilon>0$ is arbitrary.

In the weighted random graph $G\sim\RH(n,W)$ consider the family $\mathcal H$ of all sets $X\subset V(G)$ of size $k_n$ which have the property that $|\xi(W)-\xi(W_{G[X]})|\ge \epsilon$. Then asymptotically almost surely (as $n\to +\infty$) we have that $|\mathcal H|\le \epsilon{n\choose k_n}$.
\end{lemma}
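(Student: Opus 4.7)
The plan is to bound $\Exp[|\mathcal H|]$ from above and conclude via Markov's inequality. By the symmetry of the sampling procedure $\RH(n,W)$, for any fixed $k_n$-element $X\subset[n]$, the induced weighted subgraph $G[X]$ has the same distribution as $\RH(k_n,W)$. Hence
\[
\Exp[|\mathcal H|]
= {n\choose k_n}\cdot\Pr\bigl[|\xi(W)-\xi(W_{\RH(k_n,W)})|\ge\epsilon\bigr],
\]
so it suffices to show the probability on the right decays to zero fast enough (exponentially in $k_n/\log k_n$ will be more than enough).

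To bound this probability I would combine the two ingredients immediately preceding the statement. Let $W'(x,y):=\log(1/W(x,y))$; this is an $L^\infty$-graphon with $\|W'\|_\infty\le\log(1/c)$ since $W\ge c$ almost everywhere. Writing $G=\RH(k_n,W)$, the weighted graph $G'$ obtained from $G$ by replacing each edge weight $w$ with $\log(1/w)$, as in Lemma~\ref{lem:differencegraphons}, has weights $\log(1/W(x_i,x_j))$; by construction $G'$ is therefore distributed exactly as $\RH(k_n,W')$. Corollary~\ref{cor:secondsamplingunbounded} gives
\[
\Pr\bigl[\delta_\square(G',W')\le\tfrac{20\log(1/c)}{\sqrt{\log k_n}}\bigr]\ge 1-\exp\bigl(-k_n/(2\log k_n)\bigr),
\]
and on this event Lemma~\ref{lem:differencegraphons}, applied with the choice $\gamma:=\epsilon/\log(1/c)$, yields
\[
|\xi(W)-\xi(W_G)|\le\max\!\Bigl(\tfrac{\epsilon}{2},\;\tfrac{20\log^2(1/c)}{\epsilon\sqrt{\log k_n}}\Bigr).
\]
Since $k_n\ge\sqrt{\log n}\to\infty$, the right-hand maximum drops below $\epsilon$ for all sufficiently large $n$, so $\Pr[|\xi(W)-\xi(W_G)|\ge\epsilon]\le\exp(-k_n/(2\log k_n))$ eventually.

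Feeding this into Markov's inequality gives
\[
\Pr\bigl[|\mathcal H|>\epsilon{n\choose k_n}\bigr]\le\frac{\Exp[|\mathcal H|]}{\epsilon{n\choose k_n}}\le\frac{1}{\epsilon}\exp\!\Bigl(-\frac{k_n}{2\log k_n}\Bigr),
\]
and the lower bound $k_n\ge\sqrt{\log n}$ ensures $k_n/\log k_n\to\infty$, so the right-hand side tends to zero. There is no genuine obstacle here: the argument is essentially a bookkeeping exercise that threads the $L^\infty$-sampling corollary and the cut-norm-to-$\xi$ comparison lemma through a single random $k_n$-subset, the one point requiring care being the $\epsilon$-dependent choice of $\gamma$ that balances the two terms coming out of Lemma~\ref{lem:differencegraphons}. (The upper bound $k_n\le\sqrt[3]{n}$ is not used in this step and presumably enters only when the lemma is later invoked.)
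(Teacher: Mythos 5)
Your proof is correct, and it takes a genuinely different and shorter route than the paper. You exploit exchangeability: for a \emph{fixed} $k_n$-subset $X$ the induced weighted graph $G[X]$ is distributed exactly as $\RH(k_n,W)$, so $\Exp[|\mathcal H|]={n\choose k_n}\Pr\bigl[|\xi(W)-\xi(W_{\RH(k_n,W)})|\ge\epsilon\bigr]$, and a single application of Corollary~\ref{cor:secondsamplingunbounded} at scale $k_n$ plus Lemma~\ref{lem:differencegraphons} plus Markov finishes the job. The paper instead argues in two stages: it first conditions on the event $\delta_\square(G',W')\le 20\log(1/c)/\sqrt{\log n}$ for the \emph{whole} sample, then treats a uniformly random $k_n$-subset of the (now fixed) graph $G'$ as a fresh sample $\RH(k_n,W_{G'})$ from the empirical graphon, using Fact~\ref{fact:ballsbin} to discount bin collisions --- this is where the hypothesis $k_n\le\sqrt[3]{n}$ is consumed, so contrary to your closing guess that hypothesis \emph{is} used in the paper's proof of this very lemma (your argument simply does not need it). What the paper's longer route buys is the stronger quantitative conclusion recorded in the remark after the lemma: the assertion fails only on an event of probability $\exp(-n/(2\log n))$, indeed it holds deterministically for every $G$ whose logarithmic version is cut-close to $W'$; your first-moment argument only yields failure probability $\epsilon^{-1}\exp(-k_n/(2\log k_n))$, which is far weaker when $k_n$ is near its lower bound $\sqrt{\log n}$ but entirely sufficient for the ``asymptotically almost surely'' claim actually stated (and actually used later). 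One pedantic point: Lemma~\ref{lem:differencegraphons} requires $\gamma\in(0,1]$, and your choice $\gamma=\epsilon/\log(1/c)$ may exceed $1$ if $\epsilon>\log(1/c)$; since the conclusion for a smaller $\epsilon$ implies it for a larger one, you may assume $\epsilon\le\log(1/c)$ without loss of generality, or take $\gamma=\min\bigl(1,\epsilon/\log(1/c)\bigr)$.
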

Actually, the assertion of Lemma~\ref{lem:xismallsubgraphs} is violated only with probability at most $\exp(-\frac{n}{2\log n})$, as can be seen from the proof of Lemma~\ref{lem:xismallsubgraphs}. We shall not need this refinement, though.\Referee{11} For the proof of Lemma~\ref{lem:xismallsubgraphs} we shall need the following well-known fact which we include here for the reader's convenience.
\begin{fact}\label{fact:ballsbin}
Let us place $m$ balls independently at random into one of $n$ bins. If $n\ge m^3$ then with probability at least $1-2n^{-1/3}$ each bin contains at most one ball.
\end{fact}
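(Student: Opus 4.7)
My plan is to prove Fact~\ref{fact:ballsbin} by a direct union bound over pairs of balls, essentially the standard birthday-paradox calculation. The event that some bin contains at least two balls is precisely the event that some pair of balls lands in the same bin, so the union bound converts the problem into a trivial counting exercise.

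Concretely, for each fixed pair $\{i,j\}$ of balls out of the $m$ balls, the probability that they are placed into the same bin equals $1/n$ (since the second ball is uniformly distributed independently of the first). Taking a union bound over all $\binom{m}{2}$ pairs, the probability that at least one bin receives two or more balls is at most $\binom{m}{2}/n \le m^2/(2n)$.

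Now I substitute the hypothesis $n\ge m^3$, which rearranges to $m\le n^{1/3}$. Plugging in,
\[
\frac{m^2}{2n}\le \frac{n^{2/3}}{2n}=\frac{1}{2 n^{1/3}}\le 2n^{-1/3},
\]
and the complementary event (every bin contains at most one ball) therefore has probability at least $1-2n^{-1/3}$, as claimed.

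There is no real obstacle here; the only thing to be slightly careful about is the accounting between $\binom{m}{2}$ and $m^2/2$, but the factor-of-$2$ slack in the bound $2n^{-1/3}$ (as opposed to the tighter $\tfrac12 n^{-1/3}$ the argument actually yields) makes this a non-issue.
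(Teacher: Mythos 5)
Your proof is correct. The event that some bin receives two or more balls is exactly the union over pairs of the event that those two balls coincide, each such event has probability $1/n$ by independence and uniformity, and $\binom{m}{2}/n\le m^2/(2n)\le \tfrac12 n^{-1/3}\le 2n^{-1/3}$ under $n\ge m^3$. The paper proceeds differently: it takes a union bound over \emph{balls} rather than pairs, bounding the probability that a fixed ball collides with any of the other $m-1$ by $1-(1-\tfrac1n)^{m-1}$, estimating this via the inequality $1-\tfrac1n\ge\exp(-\tfrac2n)$ to get at most $2n^{-2/3}$ per ball, and then summing over the $m\le n^{1/3}$ balls to arrive at $2n^{-1/3}$. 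Your pairwise union bound is the more elementary of the two --- it avoids the exponential estimate entirely and in fact yields the sharper constant $\tfrac12 n^{-1/3}$ --- while the paper's per-ball decomposition would be the more natural one to refine if one wanted, say, the exact distribution of the number of balls sharing a bin with a given ball. For the crude bound actually needed here, both arguments are equally adequate.
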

\begin{proof}
Let us first bound the probability that one distinguished ball is placed into a bin which contains some other balls. Recall that for each $n\ge 2$,
\begin{equation}\label{eq:TaylorE}
1-\tfrac1n\ge\exp(-\tfrac2n)\;.
\end{equation}
The mentioned probability is exactly
$$1-(1-\tfrac1n)^{m-1}\leByRef{eq:TaylorE} 1-\exp(-\tfrac{2(m-1)}{n})\le 1-\exp(-\tfrac{2m}{n})\le 1-\exp(-2n^{-2/3})\le 2n^{-2/3}\;.$$
The claim then follows by summing this error probability over all $m\le n^{1/3}$ balls.	
\end{proof}
\begin{proof}[Proof of Lemma~\ref{lem:xismallsubgraphs}]
Let $\Omega$ be the probability space underlying $W$. Let $W'=\log\nicefrac{1}{W}$ be the negative logarithm of $W$. Note that $W'$ is bounded from above by $\log\nicefrac1c$. Sampling the random graph $G\sim \RH(n,W)$ can be naturally coupled with sampling a random graph $G'\sim \RH(n,W')$. So, for the first part of the argument, we shall analyze the graph $G'$.

Suppose first that $n$ is fixed. Corollary~\ref{cor:secondsamplingunbounded} implies that with probability at least $1-\exp(-\frac{n}{2\log n})=1-o(1)$ we have $\delta_\square(G',W')\le \frac{20\log\nicefrac1c}{\sqrt{\log n}}$. We shall prove the statement for each weighted graph $G'$ satisfying this property (provided that $n$ is sufficiently large). That means that we assume that $G'$ is fixed, and $G$ is its exponentiated version. In particular, all the probabilistic calculations below are only with respect to later randomized steps. Let $\mathcal K$ be the family of all subsets $X$ of $V(G')=V(G)$ of size $k_n$ for which $\delta_\square(G',G'[X])\ge \frac{20\log\nicefrac1c}{\sqrt{\log k_n}}$.

Consider the graphon representation $W_{G'}$ of $G'$ represented on a partition $A_1\dcup A_2\dcup\ldots \dcup A_n=\Omega$. Sample the graph $H\sim\RH(k_n,W_{G'})$. If we condition on the event $\mathcal E$ that the $k_n$ representatives of the vertices of $H$ in the sampling procedure were selected from pairwise distinct ``bins'' $A_1\dcup A_2\dcup\ldots \dcup A_n$ then $H$ is a uniformly random subgraph of $G'$ of order $k_n$. Fact~\ref{fact:ballsbin} gives that $\Pr[\mathcal E]\ge 1-2n^{-1/3}$. Thus,
$$\Pr\left[\delta_\square(H,W_{G'})\ge\tfrac{20\log\nicefrac1c}{\sqrt{\log k_n}}\right]\ge \Pr\left[\;\delta_\square(H,W_{G'})\ge\tfrac{20\log\nicefrac1c}{\sqrt{\log k_n}}\;|\;\mathcal E\right]\Pr[\mathcal E]\ge \frac{|\mathcal K|}{{n\choose k_n}}(1-2n^{-1/3})\;.$$
Another application of Corollary~\ref{cor:secondsamplingunbounded} gives that $\Pr[\delta_\square(H,W_{G'})\ge\tfrac{20\log\nicefrac1c}{\sqrt{\log k_n}}]<\exp(-\frac{k_n}{2\log k_n})$. Thus, we get (for $n$ sufficiently large) that $|\mathcal K|\le \epsilon {n\choose k_n}$. So, the lemma will follow provided that we prove that $\mathcal H\subset \mathcal K$, which we prove next.

Indeed, let $X\not \in \mathcal K$ be an arbitrary vertex set of size $k_n$. Then $\delta_\square(G'[X],W')\le \delta_\square(G'[X],G')+\delta_\square(W_{G'},W')\le \frac{20\log\nicefrac1c}{\sqrt{\log k_n}}+\frac{20\log\nicefrac1c}{\sqrt{\log n}}\le \frac{40\log\nicefrac1c}{\sqrt{\log k_n}}$. Then Lemma~\ref{lem:differencegraphons} tells us that for each $\gamma\in(0,1)$,
$$|\xi(W)-\xi(W_{G[X]})|\le \max\left(\tfrac{\gamma}2\log\nicefrac1c,\frac{1}\gamma\cdot\frac{40\log\nicefrac1c}{\sqrt{\log k_n}}\right)\;.$$
We take $\gamma=\nicefrac{1}{\sqrt[4]{\log k_n}}$, and see that the right-hand side is, for large enough $n$, smaller than $\epsilon$. This proves that $X\not\in\mathcal H$ and consequently concludes the lemma.
\end{proof}

Our next two lemmas are crucial in proving the lower bound in Theorem~\ref{thm:main}. The first lemma, Lemma~\ref{lem:zoom}, tells us that in every graphon $W$ there exists a subgraphon $U$ of $W$ for which we have $\frac{1}{\xi(U)}\gtrapprox\kappa(W)$.  The second lemma, Lemma~\ref{lem:firstandsecond}, then tells us that in $\RG(n,U)$ we can typically find cliques of order almost $\frac{1}{\xi(U)}\log n$. 

\begin{lemma}\label{lem:zoom}
	Suppose that $W:\Omega^2\rightarrow[0,1]$ is a graphon with $0<\essinf W\le\esssup W<1$. Then for every $\epsilon>0$ there exists a set $A\subset \Omega$ of positive measure such that for the subgraphon $U=W\restriction_{A\times A}$ we have $
	\frac1{\xi(U)}\ge \kappa(W)-\epsilon$.
\end{lemma}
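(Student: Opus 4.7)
My plan rests on reformulating the conclusion so that Lemma~\ref{lem:maxl1} becomes directly applicable. Unfolding $\xi$ and using~\eqref{eq:rescale}, one sees that $\frac{1}{\xi(W\restriction_{A\times A})}\ge\kappa(W)-\epsilon$ is equivalent to requiring that for every measurable $B\subset A$ of positive measure,
\begin{equation*}
\int_{(x,y)\in B\times B}\log(\nicefrac{1}{W(x,y)})\,\differential(\nu^2)\;\le\;\frac{2\nu(A)\nu(B)}{\kappa(W)-\epsilon}.
\end{equation*}
Setting $a:=(\kappa(W)-\epsilon)/\nu(A)$, this is exactly the statement $\Gamma(a\mathbf{1}_B,W)\ge 0$ for every such $B$. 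In other words, the task reduces to producing $A$ such that the indicator histogram $a\mathbf{1}_A$ is admissible for $W$ together with all its sub-indicator histograms $a\mathbf{1}_B$, $B\subset A$. This is essentially the property promised by Lemma~\ref{lem:maxl1}, modulo an additive $q(\gamma)$ error and a small shift in the normalizing constant.

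Write $\kappa:=\kappa(W)$ and $L:=\log(1/\essinf W)\in(0,\infty)$, and fix a small auxiliary parameter $\gamma'>0$ to be tuned at the end. Using~\eqref{eq:kappa2}, pick $A\subset(0,1)$ with $f(A):=\frac{1}{\nu(A)^2}\int_{(x,y)\in A^2}\log(\nicefrac{1}{W(x,y)})\,\differential(\nu^2)\le 2/(\kappa-\gamma')$, and define $a^* := 2/(f(A)\nu(A))$. A direct computation shows that $f^*:=a^*\mathbf{1}_A$ satisfies $\Gamma(f^*,W)=0$ (so $f^*$ is admissible) and $\|f^*\|_1=a^*\nu(A)=2/f(A)\ge \kappa-\gamma'$. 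Applying Lemma~\ref{lem:maxl1} to $f^*$ and specializing to $f=a^*\mathbf{1}_B$ for $B\subset A$ then yields, with $I(B):=\int_{(x,y)\in B\times B}\log(\nicefrac{1}{W(x,y)})\,\differential(\nu^2)$,
\begin{equation*}
I(B)\;\le\;\frac{2\nu(A)\nu(B)}{\kappa-\gamma'}+\frac{2\nu(A)^2\,q(\gamma')}{(\kappa-\gamma')^2}.
\end{equation*}

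To obtain the target inequality I would split by the size of $\nu(B)$. If $\nu(B)\le \frac{2\nu(A)}{L(\kappa-\epsilon)}$, the trivial pointwise bound $\log(\nicefrac{1}{W})\le L$ gives $I(B)\le L\nu(B)^2\le \frac{2\nu(A)\nu(B)}{\kappa-\epsilon}$ directly. Otherwise $\nu(B)\ge \rho\,\nu(A)$ with $\rho := 2/(L(\kappa-\epsilon))$, in which case the positive gap $\frac{2\nu(A)\nu(B)}{\kappa-\epsilon}-\frac{2\nu(A)\nu(B)}{\kappa-\gamma'}=\frac{2\nu(A)\nu(B)(\epsilon-\gamma')}{(\kappa-\epsilon)(\kappa-\gamma')}$ (valid as long as $\gamma'<\epsilon$) is bounded below by an explicit positive constant $c(\epsilon,W)\cdot\nu(A)^2$. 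It then suffices to choose $\gamma'$ small enough---below $\gamma_0$, below $\epsilon$, and such that $\frac{2q(\gamma')}{(\kappa-\gamma')^2}\le c(\epsilon,W)$---to absorb the error term into the gap. This is possible because $q(\gamma')\to 0$ as $\gamma'\to 0$.

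The main obstacle I foresee is exactly this absorption: the additive $q(\gamma')$ in Lemma~\ref{lem:maxl1} does not scale with $\nu(B)$, so for vanishingly small $B\subset A$ the cushion between $\kappa-\gamma'$ and $\kappa-\epsilon$ alone cannot swallow it. This forces the case split and explains why the hypothesis $\essinf W>0$---which gives $L<\infty$---enters essentially: it is what lets the trivial pointwise bound handle the small-$B$ regime.
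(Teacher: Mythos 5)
Your proposal is correct and follows essentially the same route as the paper: choose $A$ near-optimal for \eqref{eq:kappa2}, form the indicator histogram $a^*\mathbf{1}_A$ with $\Gamma(a^*\mathbf{1}_A,W)=0$ and $\|a^*\mathbf{1}_A\|_1$ close to $\kappa(W)$ (your $a^*$ is exactly the paper's constant $c$), apply Lemma~\ref{lem:maxl1} to the sub-indicators $a^*\mathbf{1}_B$, and absorb the additive error $q(\gamma')$ by splitting on $\nu(B)$, using $\essinf W>0$ in the small-$B$ case. The only differences are cosmetic: you verify $\|a^*\mathbf{1}_A\|_1\ge\kappa(W)-\gamma'$ by direct computation rather than via the ``more precisely'' part of Proposition~\ref{proposition:differentkappa}, and your case-split threshold differs slightly from the paper's.
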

\begin{proof}
	Let us write $m=\log\left(\nicefrac{1}{\essinf W}\right)$. Consider the number $\gamma_0>0$ and the function $q:(0,\gamma_0)\rightarrow \mathbb R_+$ given by Lemma~\ref{lem:maxl1} for the graphon $W$.
	
	Let $\delta>0$ be fixed such that $\delta \kappa(W)<\gamma_0$. We use~\eqref{eq:kappa2} to find a set $A$ of positive measure such that
	\begin{equation}\label{eq:chooseA}
		(1+\delta)\frac{2}{\kappa(W)}\ge 
		\frac{1}{\nu(A)^2}\int_{(x,y)\in A^2}\log(\nicefrac{1}{W(x,y)})\;\differential(\nu^2)\;.
	\end{equation}
	Consider now the subgraphon $U=W\restriction_{A\times A}$ on the probability space $A$ endowed with the measure $\nu_A$. 
	
	We now turn to obtaining the bound on $\xi(U)$. To this end we want to control each term in~\eqref{eq:defxi}.
	\begin{claim}\label{cl:xi}
		Suppose that $B\subset A$ is an arbitrary set of positive measure. We have 
		\begin{equation}\label{eq:Cl261}
			\frac{1}{1+\delta}\cdot\frac1{\nu_A(B)}\int_{(x,y)\in B^2}\log(\nicefrac{1}{W(x,y)})\differential(\nu_A^2)\le \frac2{\kappa(W)}+\sqrt{\frac{q(\delta\kappa(W))}{(1-\delta)\kappa(W)}}\cdot m\;.
		\end{equation}
	\end{claim}
	\begin{proof}[Proof of Claim~\ref{cl:xi}] In the following, we abbreviate $q=q(\delta\kappa(W))$.
		Suppose first that $$\nu_A(B)< \sqrt{\frac{q}{(1-\delta)\kappa(W)}}\;.$$ Then $$\frac1{\nu_A(B)}\int_{(x,y)\in B^2}\log (\nicefrac{1}{W(x,y)})\;\differential(\nu_A^2)\le\nu_A(B) m<\sqrt{\frac{q}{(1-\delta)\kappa(W)}}\cdot m\;,$$
		as needed.
		
		So, it remains to consider the case
		\begin{equation}\label{secondCase}
			\nu_A(B)\ge \sqrt{\frac{q}{(1-\delta)\kappa(W)}}\;.
			\end{equation}
			Let $c>0$ be maximum such that $\Gamma(c\cdot\mathbf{1}_A,W)\ge0$. That is, we have $\Gamma(c\cdot \mathbf{1}_A,W)=0$ which can be rewritten using~\eqref{eq:A} as\Referee{13} $0=c\nu(A)+c^2\frac12\int_{(x,y)\in A^2}\log W(x,y) \;\differential(\nu^2)$. Thus,
		\begin{equation}\label{eq:cequal}
			c=\frac{2\nu(A)}{\int_{(x,y)\in A^2}\log(\nicefrac{1}{W(x,y)})\;\differential(\nu^2)}\;.
		\end{equation}
		Now the assumption of Proposition~\ref{proposition:differentkappa} is satisfied by (\ref{eq:chooseA}) (after a change of $W$ on a null set).\footnote{Proposition~\ref{proposition:differentkappa} was formulated only for graphons represented on the unit interval. However, we can use Fact~\ref{fact:representinterval} to represent $W$ on the unit interval, and then we can apply Proposition~\ref{proposition:differentkappa}.}\Referee{14}
		The ``more precisely'' part of this proposition tells us that 
\begin{equation}\label{eq:cge}
c\ge(1-\delta)\frac{\kappa(W)}{\nu(A)}\;. 
\end{equation}
		
		Consider now the function $f=c\cdot\mathbf{1}_A$. We have $\|f\|_1\ge (1-\delta)\kappa(W)$. Thus Lemma~\ref{lem:maxl1} tells us that $\Gamma(g,W)\ge -q$ for each subhistogram $g$ of $f$. Let us apply this to the function $g:=c\cdot \mathbf{1}_B$.
		Then
		\begin{align*}
			\Gamma(g,W)=c\nu(B)-\frac12c^2\int_{(x,y)\in B^2}\log (\nicefrac{1}{W(x,y)})\;\differential(\nu^2)\ge -q \;,
		\end{align*}
		yielding
		\begin{align*}
			\nu(B)\ge \frac12 c \int_{(x,y)\in B^2}\log(\nicefrac{1}{W(x,y)})\;\differential(\nu^2)-\frac{q}{c}
			\overset{\eqref{eq:cequal}}{=}
			\nu(A)\frac{\int_{(x,y)\in B^2}\log (\nicefrac{1}{W(x,y)})\;\differential(\nu^2)}{\int_{(x,y)\in A^2}\log (\nicefrac{1}{W(x,y)})\;\differential(\nu^2)}
			-\frac{q}{c}\;.
		\end{align*}
		This can be rewritten as 
		\begin{align}
			\begin{split}
				\label{eq:zahrada}
				\frac{1}{\nu(A)}\int_{(x,y)\in A^2}\log (\nicefrac{1}{W(x,y)})\;\differential(\nu^2)
				&\ge \frac{1}{\nu(B)}\int_{(x,y)\in B^2}\log(\nicefrac{1}{W(x,y)})\;\differential(\nu^2)-\frac{q\int_{(x,y)\in A^2}\log(\nicefrac{1}{W(x,y)})\;\differential(\nu^2)}{c\nu(A)\nu(B)}\\
				&\ge
				\frac{1}{\nu(B)}\int_{(x,y)\in B^2}\log(\nicefrac{1}{W(x,y)})\;\differential(\nu^2)-\frac{qm\nu(A)}{c\nu(B)}\\
				&\geByRef{eq:cge}
				\frac{1}{\nu(B)}\int_{(x,y)\in B^2}\log(\nicefrac{1}{W(x,y)})\;\differential(\nu^2)-\frac{qm\nu(A)}{(1-\delta)\kappa(W)\nu_A(B)}\\
				&\stackrel{(\ref{secondCase})}{\ge}
				\frac{1}{\nu(B)}\int_{(x,y)\in B^2}\log(\nicefrac{1}{W(x,y)})\differential(\nu^2)-\sqrt{\frac{q}{(1-\delta)\kappa(W)}}\cdot m\cdot\nu(A)\;.
			\end{split}
		\end{align}
		Thus,
		\begin{align*}
			\frac{2}{\kappa(W)}&\geByRef{eq:chooseA} \frac{1}{1+\delta}
			\frac{1}{\nu(A)^2}\int_{(x,y)\in A^2}\log(\nicefrac{1}{W(x,y)})\;\differential(\nu^2)\\
			&\geByRef{eq:zahrada}
			\frac{1}{1+\delta}\frac{1}{\nu(A)\nu(B)}\int_{(x,y)\in B^2}\log(\nicefrac{1}{W(x,y)})\;\differential(\nu^2)-\frac{1}{1+\delta}\sqrt{\frac{q}{(1-\delta)\kappa(W)}}\cdot m\\
			&\geByRef{eq:rescale}
			\frac{1}{1+\delta}\frac{1}{\nu_A(B)}\int_{(x,y)\in B^2}\log(\nicefrac{1}{W(x,y)})\;\differential(\nu_A^2)-\sqrt{\frac{q}{(1-\delta)\kappa(W)}}\cdot m\;,
		\end{align*}
		as required.
	\end{proof}
	The term $\sqrt{\frac{q(\delta\kappa(W))}{(1-\delta)\kappa(W)}}$ in~\eqref{eq:Cl261} does not depend on the choice of the set $A$. Thus, it tends to zero as we let $\delta\searrow 0$. We conclude that for $\delta>0$ sufficiently small, if we select~$A$ as in~\eqref{eq:chooseA}, we have $$\frac1{2\nu_A(B)}\int_{(x,y)\in B^2}\log(\nicefrac{1}{W(x,y)})\differential(\nu_A^2)\le \frac{1+\delta}{\kappa(W)}+\frac{1+\delta}{2}\cdot\sqrt{\frac{q(\delta\kappa(W))}{(1-\delta)\kappa(W)}}\cdot m$$ for each $B\subset A$ of positive measure. By~\eqref{eq:defxi}, we have $$\xi(U)\le\frac{1+\delta}{\kappa(W)}+\frac{1+\delta}2\cdot\sqrt{\frac{q(\delta\kappa(W))}{(1-\delta)\kappa(W)}}\cdot m\;.$$ If $\delta>0$ is sufficiently small then the right hand side (which tends to $\tfrac{1}{\kappa(W)}$ as $\delta\searrow 0$) is smaller than $\frac{1}{\kappa(W)-\eps}$, and so $$\frac{1}{\xi(U)}\ge\kappa(W)-\eps\;,$$
	as was needed.
\end{proof}

We shall need the following observation.
\begin{fact}\label{fac:productweight}
Let $G$ be a finite edge-weighted complete graph with vertex set $[n]$ whose symmetric weight function $w:V(G)^2\rightarrow [0,1]$ puts weight 1 on all self-loops. Then for every $C\subset [n]$ and for the graphon representation $W_G$ of $G$ we have $$\prod_{\substack{i,j\in C\\ i<j}} w(i,j)\ge\exp(-\xi(W_G)n|C|)\;.$$
\end{fact}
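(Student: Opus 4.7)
The plan is to apply the definition of $\xi(W_G)$ to a specific test set $B$ chosen to match $C$, and then read off the claimed inequality directly.

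First I would fix a graphon representation $W_G$ of $G$: by definition this lives on some atomless probability space $\Omega$ that is partitioned as $\Omega=A_1\dcup\cdots\dcup A_n$ with $\nu(A_i)=1/n$, and $W_G$ takes the constant value $w(i,j)$ on $A_i\times A_j$. The natural test set to feed into~\eqref{eq:defxi} is
$$B:=\bigcup_{i\in C}A_i,\qquad\text{so that}\qquad \nu(B)=|C|/n.$$

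Next I would compute the integral appearing in~\eqref{eq:defxi} for this particular $B$. Since $W_G$ is piecewise constant on the grid $\{A_i\times A_j\}$ and every diagonal term $\log(1/w(i,i))=\log 1=0$ drops out (this is where the self-loop convention is used), the integral decomposes as
\begin{equation*}
\int_{(x,y)\in B\times B}\log(\nicefrac{1}{W_G(x,y)})\,\differential(\nu^2)
=\sum_{i,j\in C}\frac{1}{n^2}\log\!\left(\frac{1}{w(i,j)}\right)
=\frac{2}{n^2}\sum_{\substack{i,j\in C\\i<j}}\log\!\left(\frac{1}{w(i,j)}\right).
\end{equation*}

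Plugging into~\eqref{eq:defxi} and using $\nu(B)=|C|/n$, the definition of $\xi$ gives
$$\xi(W_G)\ \ge\ \frac{1}{2\nu(B)}\int_{B\times B}\log(\nicefrac{1}{W_G})\,\differential(\nu^2)\ =\ \frac{1}{n|C|}\sum_{\substack{i,j\in C\\i<j}}\log\!\left(\frac{1}{w(i,j)}\right).$$
Rearranging and exponentiating yields the claimed bound $\prod_{i<j,\,i,j\in C}w(i,j)\ge\exp(-\xi(W_G)\,n|C|)$.

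This is essentially the entire argument; there is no real obstacle, only the bookkeeping of the $1/n^2$ factor from the block measures and the cancellation of diagonal terms coming from the self-loop convention. The fact that the answer is independent of the particular choice of representation $W_G$ (i.e.\ of the labelling $A_1,\ldots,A_n$) is automatic because $\xi$ is invariant under measure-preserving rearrangements of the domain.
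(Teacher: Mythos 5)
Your argument is correct and is essentially identical to the paper's proof: both take the test set to be the union of the blocks representing the vertices of $C$, note that the diagonal blocks contribute nothing because of the self-loop convention, and read the claimed bound off the definition~\eqref{eq:defxi} after exponentiating. The only (harmless) difference is that the paper separately notes the trivial case $C=\emptyset$, where your test set would have measure zero.
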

\begin{proof}
Suppose that $W_G$ is a representation of $G$ on the unit interval $I$. Suppose further that each vertex $i\in [n]$ is represented by an interval $D_i\subset I$ (c.f. Remark~\ref{rem:graphononinterval}), and that for each $1\le i<j\le n$ the interval $D_i$ lies to the left of the interval $D_j$. Of course, such a change of representation of $W_G$ does not change $\xi(W_G)$.\Referee{15}

The case $C=\emptyset$ is trivial, so assume $C\neq\emptyset$. Consider the set $X=\bigcup_{i\in C}D_i$. Definition~\eqref{eq:defxi} gives that
$$\xi(W_G)\ge \frac1{\lambda(X)}\int_{(x,y)\in X^2, x<y} \log(\nicefrac{1}{W_G(x,y)})\;\differential(\lambda^2).$$\Referee{15}
Let us split the integration above according to the partition $(D_i\times D_j)_{i,j\in [n]}$. We can neglect the terms for which $i=j$ since then the integrand is $\log(\nicefrac11)=0$. So, suppose that $i<j$. Then for each $x\in D_i$, $y\in D_j$ we have $W_G(x,y)=w(i,j)$. Thus, in this case $\int_{(x,y)\in D_i\times D_j,x<y} \log(\nicefrac{1}{W_G(x,y)})\;\differential(\lambda^2)=\frac{1}{n^2}\log(\nicefrac{1}{w(i,j)})$. We conclude that
$$\xi(W_G)\ge\frac{n}{|C|}\cdot\frac{1}{n^2}\sum_{\substack{i,j\in C\times C\\{i<j}}}\log(\nicefrac{1}{w(i,j)})\;$$
The lemma follows after exponentiation.
\end{proof}
\begin{lemma}\label{lem:firstandsecond}
Suppose that $W$ is a graphon with $\essinf W>0$. Suppose that $\alpha<1/\xi(W)$. Then asymptotically almost surely, $\RG(n,W)$ contains a clique of order $\alpha \log n$.
\end{lemma}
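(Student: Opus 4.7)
The plan is to fix $\ell:=\alpha\log n$ and a small $\epsilon_0>0$ for which $\delta:=1-\alpha(\xi(W)+\epsilon_0)>0$, condition on the weighted sample $G\sim\RH(n,W)$ (with edge weights $w_{ij}:=W(x_i,x_j)$), and then run a Paley--Zygmund second-moment argument on an appropriately weighted count of $\ell$-cliques in $\RG(n,W)$. Since $\ell$ lies in $[\sqrt{\log n},n^{1/3}]$ for large $n$, Lemma~\ref{lem:xismallsubgraphs} applied with $k_n:=\ell$ guarantees that a.a.s.\ over $G$ the ``good'' family
\[
\mathcal{G}:=\{X\in\tbinom{[n]}{\ell}:\xi(W_{G[X]})\le\xi(W)+\epsilon_0\}
\]
has cardinality at least $(1-\epsilon_0)\binom{n}{\ell}$. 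For every such $X$ and every $T\subseteq X$, Fact~\ref{fac:productweight} applied to the $\ell$-vertex complete graph $G[X]$ with $C:=T$ yields the crucial uniform bound
\[
\prod_{i<j\in T} w_{ij} \ge \exp\!\bigl(-\xi(W_{G[X]})\,\ell\,|T|\bigr)\ge n^{-(1-\delta)|T|},
\]
which is the only payoff I will need from the earlier machinery.

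Next I will introduce the ``modified count''
\[
Y := \sum_{X\in\mathcal{G}} M(X)\,\mathbf{1}_{X\text{ is a clique in }\RG(n,W)},\qquad M(X):=\prod_{i<j\in X}w_{ij}^{-1},
\]
so that $Y>0$ implies $\omega(\RG(n,W))\ge\ell$. The inverse-weight normalisation is designed so that the first moment collapses to the purely combinatorial quantity
\[
\Exp[Y\mid G]=\sum_{X\in\mathcal{G}}M(X)\prod_{i<j\in X}w_{ij}=|\mathcal{G}|\ge(1-\epsilon_0)\tbinom{n}{\ell}.
\]
Since the edges of $\RG(n,W)$ are independent conditional on $G$, and $\Pr[X,X'\text{ both cliques}\mid G]=\bigl(\prod_X w\bigr)\bigl(\prod_{X'} w\bigr)/\prod_{X\cap X'} w$, a direct computation gives
\[
\Exp[Y^2\mid G]=\sum_{X,X'\in\mathcal{G}}\prod_{i<j\in X\cap X'}w_{ij}^{-1}.
\]

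The remaining work is to show $\Exp[Y^2\mid G]\le(1+o(1))\Exp[Y\mid G]^2$ a.a.s., which I will do by splitting the double sum by $t:=|X\cap X'|$. The $t=0$ term gives ratio at most $1$ trivially; the $t=1$ term has empty-product weight $1$ and its pair-count contributes only $O(\ell^2/n)=o(1)$. For each $t\ge 2$ the uniform bound above gives $\prod_{X\cap X'}w_{ij}^{-1}\le n^{(1-\delta)t}$ whenever $X\in\mathcal{G}$, while the standard hypergeometric estimate $\binom{\ell}{t}\binom{n-\ell}{\ell-t}/\binom{n}{\ell}\le (C\ell^2/n)^t$ controls the combinatorial count. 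Multiplying, the $t$-th contribution is bounded by $(C\ell^2/n^\delta)^t/(1-\epsilon_0)^2$, and the geometric tail sums to $O(\log^2 n/n^\delta)=o(1)$. Paley--Zygmund then delivers $\Pr[Y>0\mid G]\ge 1-o(1)$ a.a.s.\ over $G$; integrating over $G$ yields the claimed a.a.s.\ existence of an $\ell$-clique.

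The main obstacle is the orchestration of the weighting $M(X)$ with the $\mathcal{G}$-restriction. A plain unweighted count would leave the first moment as a sample-dependent sum $\sum_X\prod_X w$ that is hard to control and would force a far more delicate analysis; the inverse-weight choice $M(X)=\prod_X w_{ij}^{-1}$ is precisely what turns the second moment into a sum of intersection weights $\prod_{X\cap X'}w_{ij}^{-1}$, which are exactly the quantities for which Fact~\ref{fac:productweight} (restricted to $X\in\mathcal{G}$) supplies the uniform upper bound $n^{(1-\delta)|T|}$. Without the $\mathcal{G}$-restriction a handful of untypical $X$ carrying very small sub-clique weights would inflate the $t\ge 2$ terms beyond $o(1)$, and without the inverse weighting the required uniform bound would have no mechanism to enter the second-moment calculation.
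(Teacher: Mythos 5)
Your proposal is correct and is essentially the paper's own proof: the same conditioning on the weighted sample, the same inverse-weight normalisation $M(X)=\prod w_{ij}^{-1}$ (which is exactly the paper's $Y_A=X_A/\Exp[X_A]$), the same restriction to the good family via Lemma~\ref{lem:xismallsubgraphs}, the same use of Fact~\ref{fac:productweight} on intersections, and the same geometric-series bound over intersection sizes. The only nitpick is that the $t=0$ term gives ratio at most $1/(1-\epsilon_0)^2$ rather than $1$, so the conclusion is $\Exp[Y^2\mid G]\le(1+O(\epsilon_0)+o(1))\Exp[Y\mid G]^2$ and one finishes by letting $\epsilon_0\searrow 0$, exactly as the paper does.
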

\begin{proof}
Choose $\delta>0$ such that $\alpha(\xi(W)+\delta)<1$ and let us write 
\begin{equation}\label{eq:setdelta}
\gamma=1-\alpha(\xi(W)+\delta)\;.
\end{equation}

Let $H\sim \RH(n,W)$. Set $k:=\alpha\log n$. Let $\mathcal A$ be be the family of all sets $A\subseteq V(H)$ of size $k$ for which $|\xi(W)-\xi(W_{H[A]})|<\delta$. Lemma~\ref{lem:xismallsubgraphs} tells us that with high probability, the graph $H$ has the property that $|\mathcal A|\ge (1-\delta){n\choose k}$. Condition on this event, and fix a realization of the weighted graph $H$ with a weight function $w:{V(H)\choose 2}\rightarrow [0,1]$ having the above property.

We shall now obtain from $H$ an unweighted graph $G$ by including each edge $ij$ with probability $w(i,j)$. It is our task to show that with high probability, $G$ contains a clique of order $k$. (Recall that this probability is only with respect to obtaining $G$ from $H$.)

For each $A\in\mathcal A$ set up the indicator $X_A$ of the event that $G[A]$ is a clique. Define 
\begin{equation}\label{eq:modM}
Y_A:=\frac{X_A}{\Exp[X_A]}
\end{equation}
(note that the denominator is not zero because $\essinf W>0$). Let $Y=\sum_{A\in\mathcal A}Y_A$. To conclude the proof, we want to prove that for each $\epsilon>0$ (which we now consider fixed), we have
\begin{equation}\label{eq:Ypositive}
\mbox{$Y>0$ with probability at least $1-\epsilon$,}
\end{equation}
provided that $n$ is sufficiently large.

We have $\Exp[Y_A]=1$ for each $A\in\mathcal A$, and consequently $\Exp[Y]=|\mathcal A|$, which tends to infinity with $n\to +\infty$. Below we shall prove that $\Exp[Y^2]\le(1+\epsilon)\Exp^2[Y]$, which will establish~\eqref{eq:Ypositive} via the usual second-moment argument.
\begin{claim}\label{cl:secmomY}
We have $\Exp[Y^2]<(1+\epsilon)\Exp^2[Y]$.
\end{claim}
\begin{proof}[Proof of Claim~\ref{cl:secmomY}]
For $\ell=0,1,\ldots,k$, let us write $$M_\ell=\sum_{\substack{A,B\in\mathcal A\\|A\cap B|=\ell}}\Exp[Y_AY_B]\;.$$
Then we have $\Exp[Y^2]=\sum_{\ell=0}^k M_\ell$. So, it is our goal to bound each of the numbers $M_\ell$. We have
\begin{equation}\label{eq:Mnull}
M_0\le {n\choose k}^2\;.
\end{equation}
For $\ell>0$ we have\footnote{the calculations below are also valid in the case $\ell=0$, but we shall not use them in that case}
\begin{align*}
M_\ell=\sum_{\substack{A,B\in\mathcal A\\|A\cap B|=\ell}}\Exp[Y_AY_B]=\sum_{C\in{V(G)\choose \ell}}\sum_{\substack{A,B\in\mathcal A\\A\cap B=C}}\frac{\Exp[X_AX_B]}{\Exp[X_A]\Exp[X_B]}\;.
\end{align*}
Given two sets $A,B\in\mathcal A$, it is easy to see that $$\Exp[X_AX_B]=\frac{\Exp[X_A]\Exp[X_B]}{\prod_{ij\in {A \cap B\choose 2}}w(i,j)}\;.$$ Thus,
\begin{align}
\begin{split}
\label{eq:Mellfirstline}
M_\ell&=\sum_{C\in{V(G)\choose \ell}}\sum_{\substack{A,B\in\mathcal A\\A\cap B=C}}\prod_{ij\in {C\choose 2}}w^{-1}(i,j)\\
\JUSTIFY{Fact~\ref{fac:productweight} applied on the graph $H[A]$ and subset $C$}&\le
\sum_{C\in{V(G)\choose \ell}}\sum_{\substack{A,B\in\mathcal A\\A\cap B=C}}\exp\left((\xi(W)+\delta)k\ell\right)
\end{split}\\
\nonumber
&\le {n\choose \ell \:|\:k-\ell\:|\:k-\ell}\exp\left((\xi(W)+\delta)k\ell \right)\\
\nonumber
&= \frac{n!(n-k)!^2}{(n-2k+\ell)!n!^2}\cdot
\frac{k!^2}{\ell!(k-\ell)!^2}\cdot
{n\choose k}^2
\exp\left((\xi(W)+\delta)k\ell \right)\\
\nonumber
\JUSTIFY{$n$ is sufficiently large}
&\le \left(\frac{2}{n}\right)^\ell\cdot
k^{2\ell}\cdot
{n\choose k}^2
\exp\left((\xi(W)+\delta)k\ell \right)\\
\nonumber
&= \left(\frac{2k^2\exp\left((\xi(W)+\delta)k \right)}{n}\right)^\ell\cdot{n\choose k}^2\\
&\leByRef{eq:setdelta}
\left(2^{\ell}k^{2\ell}\exp\left(-\gamma \ell\log n\right)\right)\cdot{n\choose k}^2
\nonumber
\\
\JUSTIFY{$n\gg k$}&\le
\exp\left(-\tfrac{\gamma \ell\log n}{2}\right)\cdot{n\choose k}^2\;.
\nonumber
\end{align}
Recall that $\Exp[Y]=|\mathcal A|\ge (1-\delta){n\choose k}$. Thus,\Referee{16}
\begin{align*}
\frac{\Exp[Y^2]}{\Exp^2[Y]}
&= \frac{\sum_{\ell=0}^{k}M_\ell}{\Exp^2[Y]}
\\
&\leByRef{eq:Mnull} \frac 1{(1-\delta)^2}+ \frac{\sum_{\ell=1}^{k}M_\ell}{(1-\delta)^2{n\choose k}^2}
\\
&\leByRef{eq:Mellfirstline}\frac 1{(1-\delta)^2}+\frac{1}{(1-\delta)^2}\sum_{\ell=1}^k\exp\left(-\tfrac{\gamma \ell\log n}{2}\right)\\
&\le\frac{1}{(1-\delta)^2}\sum_{\ell=0}^{\infty}\exp\left(-\tfrac{\gamma \ell\log n}{2}\right)\;.
\end{align*}
Note that the last expression is a geometric series, and its quotient $\exp\left(-\frac{\gamma \log n}{2}\right)$ tends to $0$ as $n\rightarrow\infty$. Therefore the sum of the series tends to $1$.
Thus for sufficiently large $n$ and for sufficiently small $\delta>0$ we get $\frac{\Exp[Y^2]}{\Exp^2[Y]}< 1+\epsilon$, as was needed.
\end{proof}
Claim~\ref{cl:secmomY} tells us that $\Var[Y]\le \epsilon\Exp^2[Y]$. Therefore,~\eqref{eq:Ypositive} follows from Chebyshev's Inequality.
\end{proof}

\section{Proof of Theorem~\ref{thm:main}}\label{sec:mainproof}
Let $c=\essinf W$. Suppose that $W$ is represented on the unit interval $I=(0,1)$ equipped with the Lebesgue measure $\lambda$. Let us replace the value of $W$ in every point $(x,y)\in(0,1)^2$ that is not a point of approximate continuity by $c$. This is a change of measure zero by Fact~\ref{theorem:appcont}. In particular, $\kappa(W)$ does not change, nor does the distribution of the model $\RG(n,W)$.

\subsection{Upper bound}
Let $\epsilon\in (0,\kappa(W)/4)$ be arbitrary. Let $n$ be sufficiently large. We want to show that a.a.s.~$G\sim\RG(n,W)$ contains no clique of order $k=(\kappa(W)+\epsilon)\log n$. Let $X_n(G)$ count such cliques. We have
$$ \Exp[X_n(G)]=\int_{(x_1,x_2,\ldots,x_n)\in I^n}\sum_{A\in{[n]\choose k}}\prod_{i,j\in A, i<j} W(x_i,x_j)\;\differential(\lambda^n)\;.
$$
This summation has ${n\choose k}<n^k=\exp\left((\kappa(W)+\epsilon)\log^2 n\right)$ terms.
By~\eqref{eq:kappa1}, each of these terms is bounded by $P_k^{\frac{k(k-1)}{2}}$ where $\lim\limits_{k\rightarrow\infty}P_k=\exp\left(-\frac{2}{\kappa(W)}\right)$. So if $n$ is sufficiently large then each term is bounded by
$$\exp\left(-\frac{2}{\kappa(W)}\cdot\frac{k(k-1)}{2}+\eps\right)\;.$$
Thus,
\begin{equation*}
\begin{split}
\Exp[X_n(G)]&\le \exp\left((\kappa(W)+\epsilon)\log^2 n-\frac{2}{\kappa(W)}\cdot\frac{k(k-1)}{2}+\eps\right)\\
&=
\exp\left(
\left(-\eps-\frac{\eps^2}{\kappa(W)}\right)\log^2n+\left(1+\frac{\eps}{\kappa(W)}\right)\log n+\eps
\right)
\rightarrow 0\;,
\end{split}
\end{equation*}
as $n$ goes to infinity. Markov's inequality concludes the proof.
\subsection{Lower bound}
We shall assume that $\esssup W<1$. Let us justify this step.
Suppose that $W$ is an arbitrary graphon. We can then take a sequence of graphons $W_1,W_2,\ldots$, where $W_j=\min(W,1-\frac1j)$ (pointwise).\Referee{17} Then~\eqref{eq:kappa2} tells us that $\kappa(W_j)\rightarrow \kappa(W)$ (even in the case $\kappa(W)=+\infty$). Thus, it suffices to prove a lower bound for each of the graphons $W_j$.

Let $\epsilon>0$ be arbitrary. We apply Lemma~\ref{lem:zoom} to find a set $A\subset \Omega$ of positive measure such that for the subgraphon $U=W\restriction_{A\times A}$ we have $\frac{1}{\xi(U)}\ge\kappa(W)-\epsilon$. Lemma~\ref{lem:firstandsecond} then tells us that asymptotically almost surely, $\omega(\RG(n,U))\ge (\kappa(W)-2\epsilon)\log n$. Since there is a coupling of $G=\RG(n,W)$ and $G'=\RG(\frac{\lambda(A)n}2,U)$ such that $G$ asymptotically almost surely contains a copy of $G'$, we obtain that (cf.~\eqref{eq:PsiSubgraphon}),
$$\omega(\RG(n,W))\ge (\kappa(W)-3\epsilon)\log n \text{ asymptotically almost surely.}$$ Since $\epsilon>0$ was arbitrary, this completes the proof of Theorem~\ref{thm:main}.

\section{Concluding remarks}\label{sec:concluding}
Our concluding remarks concern possibilities of extending the main result, Theorem~\ref{thm:main}.
\subsection{Sharpening the results} As mentioned in Section~\ref{sec:Intro}, Matula, Grimmett and McDiarmid proved for $p\in(0,1)$ an asymptotic concentration of $\omega(\RG(n,p))$ on two consecutive values for which they provided an explicit formula. It is possible that when, say, $0<\essinf W\le \esssup W<1$, then $\omega(\RG(n,W))$ is asymptotically concentrated on two consecutive values.

\subsection{Sparse inhomogeneous random graphs}\label{sec:sparse}
Let us look at our set of problems for $\RG(n,p_n\cdot W)$, where $p_n\rightarrow 0$, i.e., the model introduced in Section~\ref{ssec:relatedliterature}. Note that Remark~\ref{rem:CliqueIndep} is no longer valid: the problem of maximum clique and maximum independent set in $\RG(n,p_n\cdot W)$ is genuinely different. It turns out that the more interesting problem is that of the independent set. For the Erd\H{o}s--R\'enyi random graph $\RG(n,p_n)$, the problem of determining the independence number is essentially solved by the above mentioned work \cite{Matula:LargestClique,GriMcD:ColouringRandom}, and by the work of Frieze~\cite{Frieze:Independence} down to the range $p_n\gg \frac{1}{n}$. Note that the regime $p_n\ll \frac{1}{\sqrt n}$ is more subtle as the second moment argument does not work, and indeed Frieze's contribution was in establishing concentration of the count of large independent sets by alternative means. The regime $p_n=C/n$ seems to require methods from statistical physics. In the related model of random regular graphs, these methods have already provided an answer~\cite{DiSlSu:MaxIndRandReg}. 

It would be of interest to see whether the methods we developed in this paper can give an answer also for the independence number in sparser inhomogeneous random graphs. It seems that the two core ingredients of our proof, Lemma~\ref{lem:secondsamplinglemma} and the second moment argument do have sparse counterparts.
\begin{itemize}
\item  The sparse counterpart to Lemma~\ref{lem:secondsamplinglemma} is \cite[Theorem 2.14]{BHHZ:LPgraphlimits} which says that if $p_n\gg \frac 1n$ then the sequence $\frac1{p_n}\cdot\mathbb G(n,p_n\cdot W)$ (here, the factor $\frac1{p_n}$ in front of the random graph $\mathbb G(n,p_n\cdot W)$ denotes edge weighting; this is the natural way to deal with the scaling in this situation) converges to $W$ in the cut-distance almost surely.
\item Our second moment argument is complicated but it builds on the seminal work~\cite{Matula:LargestClique,GriMcD:ColouringRandom} which works down to the range $p_n=\Theta(\frac{1}{\sqrt n})$. Thus, at least when $W\in L^\infty(\Omega^2)$, our methods possibly extend to this range. The situation when $W\in L^p(\Omega^2)$ for some general $p$ is probably more subtle.

Of course, one might ask whether the methods used by Frieze~\cite{Frieze:Independence} could be extended to the inhomogeneous setting, thus possibly giving results even for $\frac{1}{n}\ll p_n< \frac{1}{\sqrt n}$. This however goes beyond the scope of this paper.
\end{itemize}
\section*{Acknowledgments}
JH thanks Lutz Warnke for suggesting the idea of the proof of Theorem~\ref{thm:concentration}. We thank the referees for their helpful comments.

\bigskip
The contents of this publication reflects only the authors' views and not necessarily the views of the European Commission of the European Union.
\bibliographystyle{plain} 
\bibliography{bibl}

\begin{thebibliography}{10}

\bibitem{BolBorChaRio:PercolationDense}
B.~Bollob{\'a}s, Ch. Borgs, J.~Chayes, and O.~Riordan.
\newblock Percolation on dense graph sequences.
\newblock {\em Ann. Probab.}, 38(1):150--183, 2010.

\bibitem{BolJanRio:PhaseTransition}
B.~Bollob{\'a}s, S.~Janson, and O.~Riordan.
\newblock The phase transition in inhomogeneous random graphs.
\newblock {\em Random Structures Algorithms}, 31(1):3--122, 2007.

\bibitem{BHHZ:LPgraphlimits}
C.~Borgs, J.~T. Chayes, H.~Cohn, and Y.~Zhao.
\newblock An {$L^p$} theory of sparse graph convergence {I}: limits, sparse
  random graph models, and power law distributions, 2014.
\newblock arXiv:1401.2906.

\bibitem{Borgs2008c}
C.~Borgs, J.~T. Chayes, L.~Lov{\'a}sz, V.~T. S{\'o}s, and K.~Vesztergombi.
\newblock Convergent sequences of dense graphs. {I}. {S}ubgraph frequencies,
  metric properties and testing.
\newblock {\em Adv. Math.}, 219(6):1801--1851, 2008.

\bibitem{CoFoSu:Sidorenko}
D.~Conlon, J.~Fox, and B.~Sudakov.
\newblock An approximate version of {S}idorenko's conjecture.
\newblock {\em Geom. Funct. Anal.}, 20(6):1354--1366, 2010.

\bibitem{DevFra}
L.~Devroye and N.~Fraiman.
\newblock Connectivity of inhomogeneous random graphs.
\newblock {\em Random Structures Algorithms}, 45(3):408--420, 2014.

\bibitem{DiSlSu:MaxIndRandReg}
J.~Ding, A.~Sly, and N.~Sun.
\newblock Maximum independent sets on random regular graphs, 2013.

\bibitem{FHHMZ}
M.~Fabian, P.~Habala, P.~H{\'a}jek, V.~Montesinos, and V.~Zizler.
\newblock {\em Banach space theory}.
\newblock CMS Books in Mathematics/Ouvrages de Math\'ematiques de la SMC.
  Springer, New York, 2011.
\newblock The basis for linear and nonlinear analysis.

\bibitem{FrMi:Diameter}
N.~Fraiman and D.~Mitsche.
\newblock The diameter of inhomogeneous random graphs, 2015.
\newblock arXiv:1510.08882.

\bibitem{Frieze:Independence}
A.~M. Frieze.
\newblock On the independence number of random graphs.
\newblock {\em Discrete Math.}, 81(2):171--175, 1990.

\bibitem{GriMcD:ColouringRandom}
G.~R. Grimmett and C.~J.~H. McDiarmid.
\newblock On colouring random graphs.
\newblock {\em Math. Proc. Cambridge Philos. Soc.}, 77:313--324, 1975.

\bibitem{Hatami:Sidorenko}
H.~Hatami.
\newblock Graph norms and {S}idorenko's conjecture.
\newblock {\em Israel J. Math.}, 175:125--150, 2010.

\bibitem{HolLasLei:StochasticBlockmodels}
P.~W. Holland, K.~B. Laskey, and S.~Leinhardt.
\newblock Stochastic blockmodels: first steps.
\newblock {\em Social Networks}, 5(2):109--137, 1983.

\bibitem{KaKoPa:PhaseTransitionBlock}
M.~Kang, Ch. Koch, and A.~Pach{\'o}n.
\newblock The phase transition in multitype binomial random graphs.
\newblock {\em SIAM J. Discrete Math.}, 29(2):1042--1064, 2015.

\bibitem{Kechris}
A.~S. Kechris.
\newblock {\em Classical descriptive set theory}, volume 156 of {\em Graduate
  Texts in Mathematics}.
\newblock Springer-Verlag, New York, 1995.

\bibitem{LiSze:Sidorenko}
J.~L.~X. Li and B.~Szegedy.
\newblock On the logarithimic calculus and {S}idorenko's conjecture.
\newblock Preprint (arXiv:1107.1153).

\bibitem{Lovasz:Sidorenko}
L.~Lov{\'a}sz.
\newblock Subgraph densities in signed graphons and the local
  {S}imonovits-{S}idorenko conjecture.
\newblock {\em Electron. J. Combin.}, 18(1):Paper 127, 21, 2011.

\bibitem{Lovasz:LimitBook}
L.~Lov{\'a}sz.
\newblock {\em Large networks and graph limits}, volume~60 of {\em American
  Mathematical Society Colloquium Publications}.
\newblock American Mathematical Society, Providence, RI, 2012.

\bibitem{LovSze07}
L.~Lov\'asz and B.~Szegedy.
\newblock Szemer\'edi's {L}emma for the analyst.
\newblock {\em J. Geom. and Func. Anal}, 17:252--270, 2007.

\bibitem{LoSz:RegularityTopology}
L.~Lov{\'a}sz and B.~Szegedy.
\newblock Regularity partitions and the topology of graphons.
\newblock In {\em An irregular mind}, volume~21 of {\em Bolyai Soc. Math.
  Stud.}, pages 415--446. J\'anos Bolyai Math. Soc., Budapest, 2010.

\bibitem{Matula:LargestClique}
D.~W. Matula.
\newblock The largest clique size in a random graph.
\newblock Technical report, Department of Computer Science, Southern Methodist
  University, 1976.

\bibitem{MolloyReed}
M.~Molloy and B.~Reed.
\newblock {\em Graph colouring and the probabilistic method}, volume~23 of {\em
  Algorithms and Combinatorics}.
\newblock Springer-Verlag, Berlin, 2002.

\bibitem{Rudin}
W.~Rudin.
\newblock {\em Real and complex analysis}.
\newblock McGraw-Hill Book Co., New York, third edition, 1987.

\bibitem{Sidorenko:Conjecture}
A.~Sidorenko.
\newblock A correlation inequality for bipartite graphs.
\newblock {\em Graphs Combin.}, 9(2):201--204, 1993.

\bibitem{Simonovits:Sidorenko}
M.~Simonovits.
\newblock Extremal graph problems, degenerate extremal problems, and
  supersaturated graphs.
\newblock In {\em Progress in graph theory ({W}aterloo, {O}nt., 1982)}, pages
  419--437. Academic Press, Toronto, ON, 1984.

\bibitem{Tur:LargestSubcritical}
T.~S. Turova.
\newblock The largest component in subcritical inhomogeneous random graphs.
\newblock {\em Combin. Probab. Comput.}, 20(1):131--154, 2011.

\bibitem{vanderHofstad:Critical}
R.~van~der Hofstad.
\newblock Critical behavior in inhomogeneous random graphs.
\newblock {\em Random Structures Algorithms}, 42(4):480--508, 2013.

\end{thebibliography}

\appendix
\section{A simplified version of Lemma~\ref{lem:maxl1}}
Here we provide a weaker version of Lemma~\ref{lem:maxl1} which deals with the case that the supremum in~\eqref{eq:defKappa} is attained. While this version is not sufficient for our purposes we decided to offer it to the reader because its proof is based on the same idea yet is stripped off technicalities.
\begin{lemma}\label{lem:maxl1App}
	Suppose that $W$ is an arbitrary graphon, and suppose that $f^*$ is an admissible histogram for $W$ for which $\|f^*\|_1=\kappa(W)$. Then every subhistogram of $f^*$ is admissible for $W$. 
\end{lemma}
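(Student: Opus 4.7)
The plan is to exploit the maximality of $f^*$ in the defining supremum of $\kappa(W)$ via a first-order optimality analysis on a two-dimensional subfamily of histograms.

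The starting observation is that $\Gamma(f^*, W) = 0$. Indeed, if $\Gamma(f^*, W) > 0$, then the quadratic function $c \mapsto \Gamma(cf^*, W)$ is positive at $c = 1$ and tends to $-\infty$ as $c \to \infty$, so by continuity some $c > 1$ yields an admissible histogram $cf^*$ with $\|cf^*\|_1 > \|f^*\|_1$, contradicting $\|f^*\|_1 = \kappa(W)$. (This is essentially the continuity step already used in the proof of Fact~\ref{fact:kappaalt}.) Next, fix a subhistogram $f \leq f^*$, set $g = f^* - f \geq 0$, and introduce the abbreviations
\begin{equation*}
A = \|f\|_1,\quad B = \|g\|_1,\quad C = -\tfrac12\int f(x)f(y)\log W(x,y)\,\differential(\nu^2),
\end{equation*}
\begin{equation*}
D = -\tfrac12\int g(x)g(y)\log W(x,y)\,\differential(\nu^2),\quad E = -\int f(x)g(y)\log W(x,y)\,\differential(\nu^2).
\end{equation*}
Because $\log W \leq 0$ and $f, g \geq 0$, all five quantities are non-negative. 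In this notation $\Gamma(f) = A - C$, $\Gamma(g) = B - D$, and the identity $\Gamma(f^*) = 0$ reads $A + B = C + D + E$; the goal becomes to prove $A \geq C$.

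The key step is to consider the two-parameter family of histograms $h_{\lambda,\mu} = \lambda f + \mu g$ for $\lambda,\mu\geq 0$, on which $\|h_{\lambda,\mu}\|_1 = \lambda A + \mu B$ and $\Gamma(h_{\lambda,\mu}) = \lambda A + \mu B - \lambda^2 C - \mu^2 D - \lambda\mu E$. Since $h_{1,1} = f^*$ attains the supremum in~\eqref{eq:defKappa}, the point $(1,1)$ is a global maximizer of the linear functional $\lambda A + \mu B$ over the admissible region $\{(\lambda,\mu) \in \RR_+^2 : \Gamma(h_{\lambda,\mu}) \geq 0\}$. As $(1,1)$ lies in the interior of the positive quadrant and the only active constraint there is $\Gamma(h_{\lambda,\mu}) = 0$, a standard first-order (KKT) argument yields $\sigma \geq 0$ with
\begin{equation*}
(A - 2C - E,\; B - 2D - E) \;=\; -\sigma(A, B).
\end{equation*}
Adding the two scalar equations and substituting $A + B = C + D + E$ pins down $\sigma = 1$, and hence $A = C + E/2$. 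Since $E \geq 0$, this gives $\Gamma(f) = A - C = E/2 \geq 0$, which is what is needed.

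The main potential obstacle is the careful verification of the KKT step and the handling of degeneracies. The first-order condition above is justified directly: for any direction $(c_1, c_2)$ with $c_1 A + c_2 B > 0$, the curve $t \mapsto h_{1 + tc_1,\,1 + tc_2}$ has $L^1$-norm strictly exceeding $\|f^*\|_1$ for all small $t > 0$, so by maximality it must violate $\Gamma \geq 0$, and a Taylor expansion gives $c_1(A - 2C - E) + c_2(B - 2D - E) \leq 0$; applied also to $-c$, this forces $\nabla \Gamma(1,1)$ to be a non-positive scalar multiple of $(A, B)$. The only remaining degenerate possibility is $\nabla \Gamma(1,1) = 0$, but this combined with $A + B = C + D + E$ forces $C = D = E = 0$ and hence $A = B = 0$, so both $f$ and $g$ are trivial and the claim is vacuous.
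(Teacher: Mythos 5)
Your proof is correct, and it operates on the same two-parameter family $\lambda f+\mu(f^*-f)$ with the same quantities $A,B,C,D,E$ as the paper's Claim~\ref{cl:AndrasClaimApp}, but it exploits the optimality of $f^*$ in a genuinely different way. The paper argues by contradiction: assuming $\Gamma(f)<0$, it exhibits an explicit norm-preserving perturbation (the line $\epsilon_2=\tfrac{A}{B}\epsilon_1$) along which $\Gamma$ strictly increases, and then pushes slightly further to manufacture an admissible histogram of larger $L^1$-norm. You instead extract the full first-order stationarity condition at $(1,1)$, namely $\nabla\Gamma(1,1)=-\sigma(A,B)$ with $\sigma\ge 0$, use $\Gamma(f^*)=0$ to pin down $\sigma=1$, and obtain the exact identity $\Gamma(f)=\Gamma(f^*-f)=E/2\ge 0$ --- a cleaner and slightly stronger conclusion, and your direct justification of the KKT step (testing directions $c$ and $-c$ in the hyperplane $\langle(A,B),c\rangle=0$) is sound. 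Two small remarks. First, in your preliminary step the claim that $\Gamma(cf^*)\to-\infty$ as $c\to\infty$ needs the quadratic coefficient $C+D+E$ to be strictly positive; if it vanishes and $f^*$ is non-trivial then $\kappa(W)=+\infty$, which is already excluded by the hypothesis $\|f^*\|_1=\kappa(W)<+\infty$, so this is easily patched but deserves a sentence. Second, it is worth seeing why the paper prefers its contrapositive construction: that version robustifies to Lemma~\ref{cl:AndrasClaim}, which must handle histograms only \emph{approximately} attaining the supremum, whereas your argument relies on exact stationarity at an attained maximizer and would not directly yield the quantitative statement needed in the main proof.
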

\begin{proof}
	The statement follows immediately from Claim~\ref{cl:AndrasClaimApp} (which is a simplified version of Lemma~\ref{cl:AndrasClaim}). 
	
	We abbreviate $\Gamma(\cdot, W)$ as $\Gamma(\cdot)$. Also, when we say ``admissible'', we mean with respect to $W$.
	\begin{claim}\label{cl:AndrasClaimApp}
		Assume that $g$ is an arbitrary admissible histogram. Suppose further that $g=g'+g''$ for some histograms $g'$ and $g''$. Then either $g'$ is admissible, or there exist $\epsilon_1,\epsilon_2\in(0,1)$ such that for $g^*=(1-\epsilon_1)g'+(1+\epsilon_2)g''$ we have that $g^*$ is admissible, and $\|g^*\|_1>\|g\|_1$.
	\end{claim}
	\begin{proof}[Proof of Claim~\ref{cl:AndrasClaimApp}]
		For $\epsilon_1,\epsilon_2\in(0,1)$, let us write $g^*(\eps_1,\eps_2)=(1-\epsilon_1)g'+(1+\epsilon_2)g''$. We define numbers $A$, $B$, $C$, $D$, and $E$ as in~\eqref{eq:ABCDE}.
		Note that $A,B,C,D,E\geq 0$. For any $\epsilon_1,\epsilon_2\in(0,1)$, the difference $\Gamma(g^*(\eps_1,\eps_2))-\Gamma(g)$ can be expressed as
		$$(1-\eps_1)A + (1+\eps_2)B - (1-\eps_1)^2C - (1+\eps_2)^2D - (1-\eps_1)(1+\eps_2)E - \left( A + B - C - D - E \right)$$
		$$= \eps_1 ( -A + 2C + E ) + \eps_2( B - 2D -E ) - \eps_1^2C - \eps_2^2D + \eps_1\eps_2E.$$
		In particular, if $\eps_2=\frac{A}{B}\eps_1$ then we have
		\begin{equation}
		\label{choice of eps2App}
		\Gamma\left(g^*\left(\eps_1,\frac{A}{B}\eps_1\right)\right)-\Gamma(g) = \eps_1\left(2C+E-\frac{2AD}{B}-\frac{AE}{B}\right) + \eps_1^2\left(-C-\frac{A^2D}{B^2}+\frac{AE}{B}\right).
		\end{equation}
		Now let us assume that $\Gamma(g')<0$, i.e. $A<C$. Then we have
		\begin{equation}
		\label{the linear coefApp}
		\begin{split}
		2C+E-\frac{2AD}{B}-\frac{AE}{B}&>2A-\frac{2AD}{B}-\frac{AE}{B}\\
		&>2\frac{A}{B} ( B - D - E)\\
		&>2\frac{A}{B}\left(B-D-E+(A-C)\right)\\
		&=2\frac{A}{B}\Gamma(g)\\
		&\ge 0.
		\end{split}
		\end{equation}
		By (\ref{the linear coefApp}), the right hand side of (\ref{choice of eps2App}) is a quadratic expression (in the variable $\eps_1$) with a positive linear coeficient.\Referee{8} Therefore there is $\eps_1>0$ (which we fix now) small enough such that $\epsilon_1,\frac{A}{B}\epsilon_1\in(0,1)$ and
		\begin{equation}
		\label{unif positiveApp}
		\Gamma\left(g^*\left(\eps_1,\frac{A}{B}\eps_1\right)\right)>\Gamma(g)\left(\geq 0\right).
		\end{equation}
		Since the function $(\eps_1,\eps_2)\mapsto \Gamma\left(g^*\left(\eps_1,\eps_2\right)\right)$ is obviously continuous, we can find $\eps_2\in\left(\frac{A}{B}\eps_1,1\right)$ such that $\Gamma\left(g^*\left(\eps_1,\eps_2\right)\right)$ is still nonnegative. Then we have
		\begin{equation*}
		\|g^*\left(\eps_1,\eps_2\right)\|_1 = (1-\eps_1)A + (1+\eps_2)B
		>(1-\eps_1)A + \left(1+\frac{A}{B}\eps_1\right)B = A+B = \|g\|_1.
		\end{equation*}
		This finishes the proof.
	\end{proof}
\end{proof}

\end{document}